\relax
%File: formatting-instruction.tex
\documentclass[letterpaper]{article} %DO NOT CHANGE THIS
\usepackage{aaai19}  %Required
\usepackage{times}  %Required
\usepackage{helvet}  %Required
\usepackage{courier}  %Required
\usepackage{url}  %Required
\usepackage{graphicx}  %Required

\usepackage{amsmath,amssymb}
\usepackage{extarrows}
\usepackage{braket}
\usepackage{bm}
\usepackage{booktabs}
\usepackage{subfig}
\usepackage{theorem}
\usepackage{algorithm}
\usepackage{algorithmic}
\usepackage{mydef}
\usepackage[noabbrev]{cleveref}
\crefname{equation}{}{}
\usepackage{xr}
\usepackage{pdfpages}

\usepackage{threeparttable}

\newtheorem{theorem}{Theorem}
\newtheorem{proposition}{Proposition}

\newtheorem{definition}{Definition}
\newtheorem{remark}{Remark}

\newtheorem{corollary}[theorem]{Corollary}

\newtheorem{proof}{Proof}

\nocopyright
\frenchspacing  %Required
\setlength{\pdfpagewidth}{8.5in}  %Required
\setlength{\pdfpageheight}{11in}  %Required
%PDF Info Is Required:
\pdfinfo{%
/Title (Bezier Simplex Fitting: Describing Pareto Fronts of Simplicial Problems with Small Samples in Multi-objective Optimization)
/Author (Ken Kobayashi, Naoki Hamada, Akiyoshi Sannai, Akinori Tanaka, Kenichi Bannai, Masashi Sugiyama)
/Keywords (Multi-objective optimization, Many-objective optimization, Fitting, Bezier simplex)}
\setcounter{secnumdepth}{0}
\externaldocument{appendix}

 \begin{document}
% The file aaai.sty is the style file for AAAI Press
% proceedings, working notes, and technical reports.
%
\title{B\'ezier Simplex Fitting: Describing Pareto Fronts of Simplicial Problems with Small Samples in Multi-objective Optimization}
\author{Ken Kobayashi,$^{1,2}$ Naoki Hamada,$^{1,2}$ Akiyoshi Sannai,$^{2,3}$ Akinori Tanaka,$^{2,3}$\\  \textbf{\Large{Kenichi Bannai,$^{3,2}$ Masashi Sugiyama$^{2,4}$}}\\
$^1$Artificial Intelligence Laboratory, Fujitsu Laboratories Ltd., Japan\\
$^2$The Center for Advanced Intelligence Project, RIKEN, Japan\\
$^3$Department of Mathematics, Faculty of Science and Technology, Keio University, Japan\\
$^4$Department of Complexity Science and Engineering, Graduate School of Frontier Sciences, The University of Tokyo, Japan}
\maketitle
\begin{abstract}
Multi-objective optimization problems require simultaneously optimizing two or more objective functions.
Many studies have reported that the solution set of an $M$-objective optimization problem often forms an $(M-1)$-dimensional topological simplex (a curved line for $M=2$, a curved triangle for $M=3$, a curved tetrahedron for $M=4$, etc.).
Since the dimensionality of the solution set increases as the number of objectives grows, an exponentially large sample size is needed to cover the solution set.
To reduce the required sample size, this paper proposes a B\'ezier simplex model and its fitting algorithm.
These techniques can exploit the simplex structure of the solution set and decompose a high-dimensional surface fitting task into a sequence of low-dimensional ones.
An approximation theorem of B\'ezier simplices is proven.
Numerical experiments with synthetic and real-world optimization problems demonstrate that the proposed method achieves an accurate approximation of high-dimensional solution sets with small samples.
In practice, such an approximation will be conducted in the post-optimization process and enable a better trade-off analysis.
\end{abstract}

\section{Introduction}
A multi-objective optimization problem is a problem that minimizes multiple objective functions $f_1,\dots,f_M:X \to \R$ over a common domain $X \subseteq \R^L$:
\begin{align*}
\mbox{minimize } & \bm f(\bm x) := (f_1(\bm x), \dots, f_M(\bm x))\\
\mbox{subject to } & \bm x \in X (\subseteq \R^L).
\end{align*}
Different functions usually have different minimizers, and one needs to consider a trade-off that two solutions $\bm x,\bm y \in X$ may satisfy $f_i(\bm x) < f_i(\bm y)$ and $f_j(\bm x) > f_j(\bm y)$.
According to \emph{Pareto ordering}, i.e.,
\begin{align*}
\bm f(\bm x) \dominates \bm f(\bm y)  \xLeftrightarrow{\text{def}}
&\ f_m(\bm x) \le f_m(\bm y)~\text{for all}~m = 1, \dots, M\\
\text{and}
&\ f_m(\bm x) < f_m(\bm y)~\text{for some}~m = 1, \dots, M,
\end{align*}
the goal of multi-objective optimization is to obtain the \emph{Pareto set}
\[
X^*(\bm f) := \Set{\bm x \in X | f(\bm y) \not \dominates f(\bm x)~\text{for all}~\bm y \in X}
\]
and the \emph{Pareto front}
\[
\bm fX^*(\bm f) := \Set{\bm f(\bm x) \in \R^M | \bm x \in X^*(\bm f)}
\]
which describe the best-compromising solutions and their values of the conflicting objective functions, respectively.

In industrial applications, obtaining the whole Pareto set/front rather than a single solution enables us to compare promising alternatives and to explore new innovative designs, whose concept is variously refered to as innovization~\cite{Deb06}, multi-objective design exploration~\cite{Obayashi05} and design informatics~\cite{Chiba09}.
Quite a few real-world problems involve simulations and/or experiments to evaluate solutions~\cite{Chand15} and lack the mathematical expression of their objective functions and derivatives.
Multi-objective evolutionary algorithms are a tool to solve such problems where the Pareto set/front is approximated by a population, i.e., a finite set of sample points~\cite{Coello07}.

While the available sample size is limited due to expensive simulations and experiments, it is well-known that the dimensionality of the Pareto set/front increases as the number of objectives grows.
Describing high-dimensional Pareto sets/fronts with small samples is one of the key challenges in many-objective optimization today~\cite{Li15}.

A considerable number of real-world applications share an interesting structure: their Pareto sets and/or Pareto fronts are often homeomorphic to an $(M-1)$-dimensional simplex.
See for example~\cite{Rodriguez-Chia02,Dasgupta09,Shoval12,Mastroddi13}.
This observation has been theoretically backed up in some cases~\cite{Kuhn67,Smale73,Shoval12}.
A recent study~\cite{Lovison14} pointed out that for all $m \le M$, each $(m-1)$-dimensional face of such a simplex is the Pareto set of a subproblem optimizing $m$ objective functions of the original problem.

By exploiting this simplex structure, we considers the problem of fitting a hyper-surface to the Pareto front.
In statistics, machine learning and related fields, regression problems have been considered in Euclidean space without boundary or at most with coordinate-wise upper/lower boundaries~\cite{Gelman07,Gelman13}.
Hyper-surface models developed in such spaces are not suitable for Pareto fronts since a simplex has a non-axis-parallel boundary whose skeleton structure is described by its faces.

\begin{figure*}[t]
\centering%
\subfloat[Simplex $\Delta^J$]{\includegraphics[width=0.25\hsize]{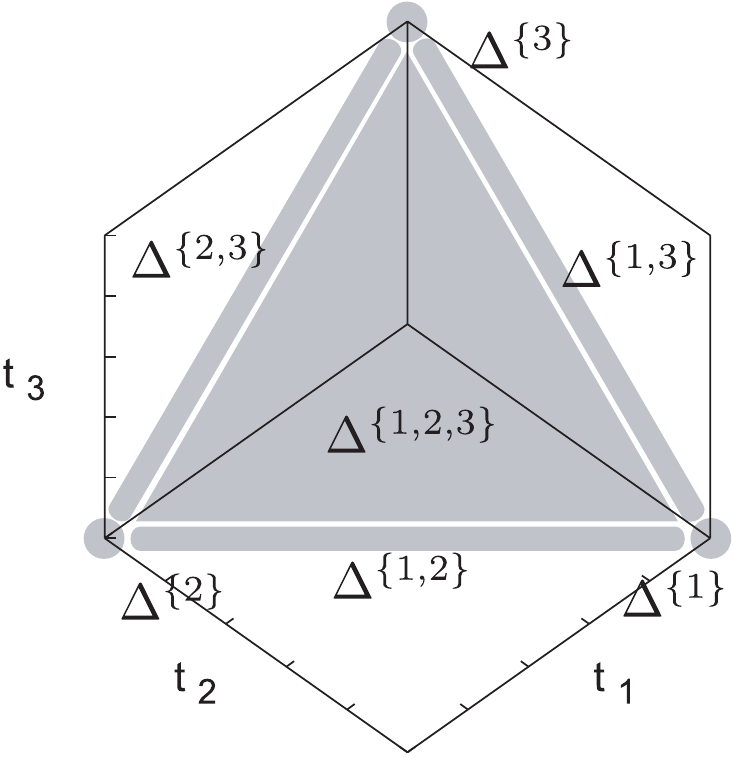}\label{fig:simplex}}
\hspace{10mm}
\subfloat[Pareto set $X^*(\bm f_J)$]{\includegraphics[width=0.25\hsize]{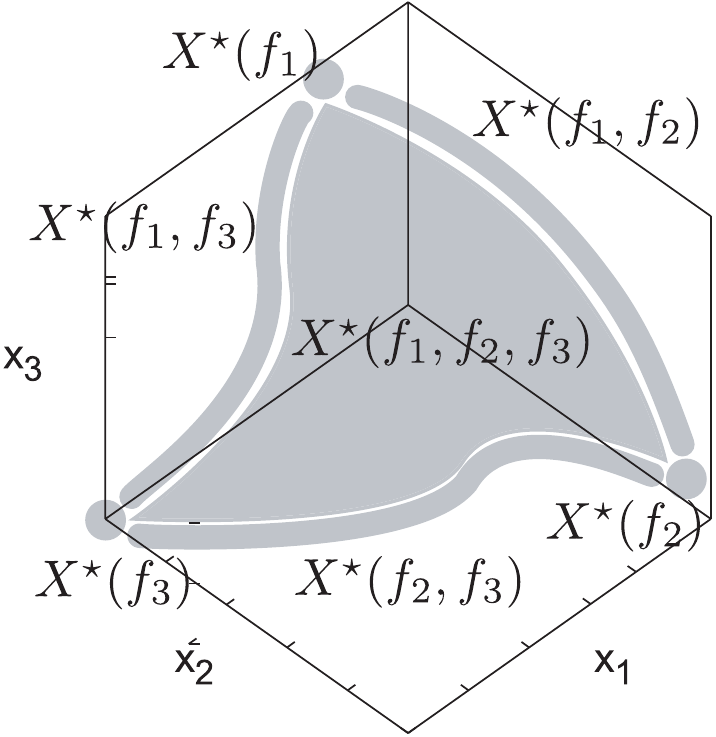}\label{fig:face-relation-X}}
\hspace{10mm}
\subfloat[Pareto front $\bm f X^*(\bm f_J)$]{\includegraphics[width=0.25\hsize]{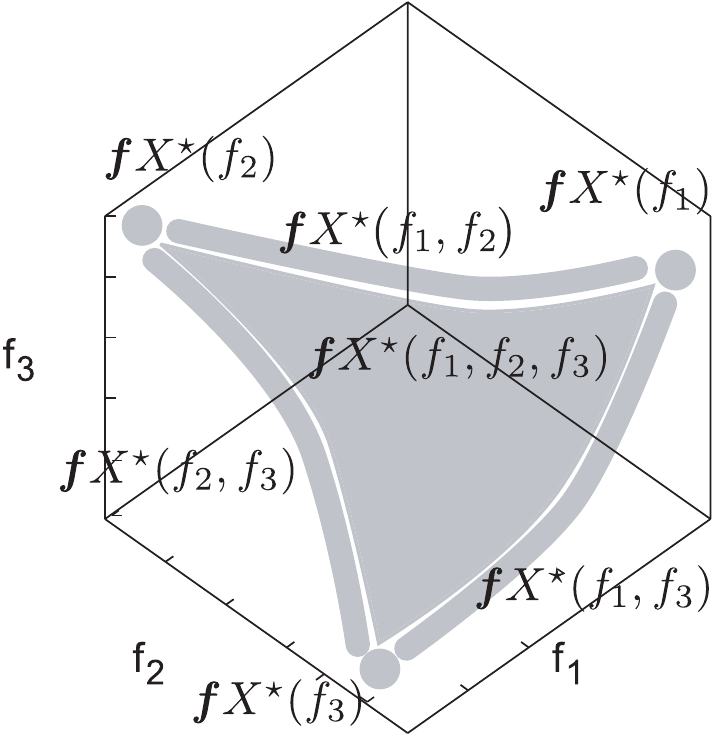}\label{fig:face-relation-F}}
\caption{Skeletons for non-empty $J \subproblemeq \Set{1, 2, 3}$ and simplicial $\bm f = (f_1, f_2, f_3)$.}\label{fig:face-relation}
\end{figure*}

This paper proposes a new model and its fitting algorithm for approximating Pareto fronts with the simplex structure.
Our contibution can be summarized as follows:
\begin{enumerate}
\item
  We define a new class of multi-objective optimization problems called the simplicial problem in which the Pareto set/front have the simplex structure discussed above.
  We propose a B\'ezier simplex model, which is a generalization of a B\'ezier curve~\cite{Farin02}.
\item
  We prove that B\'ezier simplices can approximate the Pareto set/front (as well as the objective map between them) of any simplicial problem with arbitrary accuracy.
\item
  We propose a B\'ezier simplex fitting algorithm.
  Exploiting the simplex structure of the Pareto set/front of a simplicial problem, this algorithm decomposes a B\'ezier simplex into low-dimensional simplices and fits each of them, inductively.
  This approach allows us to reduce the number of parameters to be estimated at a time.
\item
  We evaluate the approximation accuracy of the proposed method with synthetic and real-world optimization problems;
  compared to a conventional response surface model for Pareto fronts, our method exhibits a better boundary approximation while keeping the almost same quality of interior approximation.
  As a result, a five-objective Pareto front is described with only tens of sample points.
\end{enumerate}

\section{Preliminaries}\label{sec:prior-work}
Let us introduce notations for defining simplicial problems and review an existing method of B\'ezier curve fitting.

\subsection{Simplicial Problem}\label{sec:simplicial-problem}
A multi-objective optimization problem is denoted by its objective map $\bm f = (f_1, \dots, f_M): X \to \R^M$.
Let $I:=\set{1,\ldots,M}$ be the index set of objective functions and
\[
\Delta^{M-1} := \Set{(t_1, \dots, t_M) \in \R^M | 0 \le t_m, \sum_{m \in M} t_m = 1}
\]
be the \emph{standard simplex} in $\R^M$.
For each non-empty subset $J \subseteq I$, we call
\[
\Delta^J := \Set{(t_1, \dots, t_M) \in \Delta^{M-1} | t_m = 0\ (m \not\in J)}
\]
the \emph{$J$-face} of $\Delta^{M-1}$ and
\[
\bm f_J := (f_i)_{i \in J}: X \to \R^{\card{J}}
\]
the \emph{$J$-subproblem} of $\bm f$.
For each $0 \le m \le M-1$, we call
\[
\Delta^{(m)} := \bigcup_{J \subseteq I,~\card{J} = m} \Delta^J
\]
the \emph{$m$-skeleton} of $\Delta^{M-1}$.

The problem class we are interested in is as follows:
\begin{definition}[Simplicial problem]\label{def:simplicial}
A problem $\bm f: X \to \R^M$ is \emph{simplicial} if there exists a map $\bm \phi: \Delta^{M-1} \to X$ such that for each non-empty subset $J \subseteq I$, its restriction $\bm \phi|_{\Delta^J}: \Delta^J \to X$ gives homeomorphisms
\begin{align*}
\bm \phi|_{\Delta^J}: \Delta^J& \to X^*(\bm f_J),\\
\bm f \circ \bm \phi|_{\Delta^J}: \Delta^J& \to \bm fX^*(\bm f_J).
\end{align*}
We call such $\bm \phi$ and $\bm f \circ \bm \phi$ a \emph{triangulation} of the Pareto set $X^*(\bm f)$ and the Pareto front $\bm fX^*(\bm f)$, respectively.
For each non-empty subset $J \subseteq I$, we call $X^*(\bm f_J)$ the \emph{$J$-face} of $X^*(\bm f)$ and $\bm fX^*(\bm f_J)$ the \emph{$J$-face} of $\bm fX^*(\bm f)$.
For each $0 \le m \le M-1$, we call
\begin{align*}
X^{*(m)}      &:= \bigcup_{J \subseteq I,~\card{J} = m} X^*(\bm f_J),\\
\bm fX^{*(m)} &:= \bigcup_{J \subseteq I,~\card{J} = m} \bm fX^*(\bm f_J)
\end{align*}
the \emph{$m$-skeleton} of $X^*(\bm f)$ and $\bm fX^*(\bm f)$, respectively.
\end{definition}
By definition, any subproblem of a simplicial problem is again simplicial.
As shown in \Cref{fig:face-relation-X}, the Pareto sets forms a simplex.
The second condition asserts that $\bm f|_{X^*(\bm f)}: X^*(\bm f) \to \R^M$ is a $C^0$-embedding.
This means that the Pareto front of each subproblem is homeomorphic to its Pareto set as shown in \Cref{fig:face-relation-F}.
Therefore, the Pareto set/front of an $M$-objective simplicial problem can be identified with a curved $(M-1)$-simplex.
We can find its $J$-face by solving the $J$-subproblem.

The above structure appears in a broad range of applications.
In operations research, the Pareto set of the facility location problem under the $L^2$-norm is shown to be the convex hull of single-objective optima~\cite{Kuhn67}.
When the optima are in general position, the Pareto set becomes a simplex.
Similar observations are also reported under other norms~\cite{Rodriguez-Chia02}.
In economics, the Pareto set of the pure exchange economy with $M$ players is known to be homeomorphic to an $(M-1)$-dimensional simplex~\cite{Smale73}.
In hydrology, the two-objective Pareto set of a hydrologic cycle model calibration is observed to be a curve.
Its end points are single-objective optima, and the end points correspond to end points of the Pareto front curve~\cite{Dasgupta09}.
In addition, a recent study pointed out that the Pareto set of an $M$-objective convex optimization problem is diffeomorphic to an $(M-1)$-dimensional simplex and that its $(m-1)$-dimensional faces are the Pareto sets of $m$-objective subproblems for all $m \le M$~\cite{Lovison14}.

\subsection{B\'ezier Curve Fitting}\label{sec:Bezier-curve}
Since the Pareto front of any two-objective simplicial problem is a curve with two end points in $\R^3$, the B\'ezier curve would be a suitable model for describing it.

In $\R^M$, the B\'ezier curve of degree $D$ is a parametric curve, i.e., a map $\bm b: [0,1] \to \R^M$ determined by $D+1$ \emph{control points} $\bm p_0, \ldots, \bm p_D \in \R^M$~\cite{Farin02}:
\begin{equation}
\bm b(t) := \sum_{d=0}^D \binom{D}{d} {t}^d (1-t)^{(D-d)} \bm p_d \quad (0\leq t \leq 1),
\end{equation}
where $\binom{D}{d}$ represents the binomial coefficient.
The parameter $t$ moves from $t=0$ to $t=1$, giving a curve $\bm b(t)$ with two end points $\bm b(0) = \bm p_0$ and $\bm b(1) = \bm p_D$.

Given sample points $\bm x_1, \ldots, \bm x_N \in \R^M$, a B\'ezier curve can be fitted by solving the following problem~\cite{Borges02}:
\begin{equation}\label{eq:least-squares}
\begin{split}
\underset{t_n,~\bm p_d}{\text{minimize }}
& \sum_{n=1}^N \norm{\bm b(t_n) - \bm x_n}^2\\
\text{subject to }
&~0 \le t_n \le 1~(n=1,\ldots,N)\\
\end{split}
\end{equation}
where $t_n~(n=1,\ldots,N)$ and $\bm p_d~(d=0,\ldots,D)$ are variables to be optimized.
Notice that $t_n~(n=1,\ldots,N)$ are introduced to calculate residuals for each sample point.
The error function \cref{eq:least-squares} to be minimized represents the sum of squared residuals for sample points.

If one fix all control points $\bm p_d~(d=0,\ldots,D)$, the B\'ezier curve $\bm b(t)$ is determined.
The error function \cref{eq:least-squares} can be now separately optimized by minimizing $\norm{\bm b(t_n) - \bm x_n}^2$ with respect to each $t_n$.
The solution $t_n$ is the foot of a perpendicular line from a sample point $\bm x_n$ to the B\'ezier curve $\bm b([0,1])$, which satisfies
\begin{equation}\label{eq:nonlinear_equation}
\inprod{ \frac{\partial}{\partial t} \Big|_{t = t_n} \bm b(t)}{\bm b(t_n)-\bm x_n} = 0.
\end{equation}
Since \cref{eq:nonlinear_equation} is a nonlinear equation, Newton's method is used to find the solution $t_n$.

If one fix all parameters $t_n~(n=1,\ldots,N)$, the B\'ezier curve $\bm b(t_n)$ becomes a linear function with respect to the control points $\bm p_0, \ldots, \bm p_D$.
The error function \cref{eq:least-squares} can be now optimized by solving linear equations with respect to all $\bm p_d$.

\Cref{alg:borges} shows Borges and Pastva's method, which alternately adjusts parameters and control points.
This algorithm is also used to adjust some of the control points with remaining points fixed~\cite{Shao96}.

\begin{algorithm}[t]
\caption{B\'ezier curve fitting~\cite{Borges02}}
\label{alg:borges}
\begin{algorithmic}[1]
\STATE (Initialize)~Set $i \gets 1$ and initial control points $\bm p_d^{(i)}~(d=0, \ldots, D)$.
\WHILE{not converged}
    \STATE (Update parameters)~Fix control points $\bm p_d^{(i)}~(d=0, \ldots, D)$ and solve \cref{eq:nonlinear_equation} for each $n=1,\ldots,N$ using Newton's method. Then set the solutions as $t_n^{(i+1)}$.
    \STATE (Update control points)~Solve \cref{eq:least-squares} with respect to the control points and set the solutions as $\bm p_d^{(i+1)}~(d=0,\ldots,D)$.
    \STATE $i \gets i+1$.
\ENDWHILE
\RETURN $\bm p_d^{(i)}~(d=0,\ldots,D)$
\end{algorithmic}
\end{algorithm}

\section{B\'ezier Simplex Fitting}
\begin{figure}[t]
\centering%
\includegraphics[width=0.75\hsize]{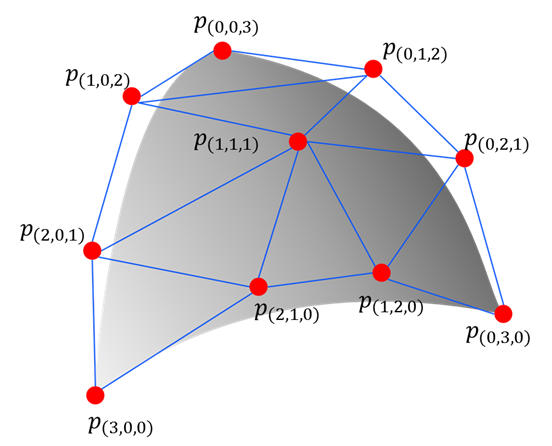}
\caption{A B\'ezier simplex for $M=3$, $D=3$.}\label{fig:Bezier-simplex}
\end{figure}
To describe the Pareto front of an arbitrary-objective simplicial problem, however, B\'ezier curve fitting is not enough, and we need to generalize it to the B\'ezier simplex.
We propose a method of fitting a B\'ezier simplex to the Pareto front of a simplicial problem.

\subsection{B\'ezier Simplex}\label{sec:Bezier-simplex}
Let $\N$ be the set of nonnegative integers and
\[
\N_D^M := \Set{(d_1,\dots,d_M)\in\N^M | \sum_{m=1}^M d_m = D}.
\]
For $\bm t :=(t_1,\ldots,t_M) \in \R^M$ and $\bm d := (d_1,\dots,d_M) \in \N^M$, we denote by $\bm t^{\bm d}$ a monomial $t^{d_1}_1 t^{d_2}_2 \cdots t^{d_M}_M$.
As shown in \Cref{fig:Bezier-simplex}, the B\'ezier simplex of degree $D$ in $\R^M$ is a map $\Delta^{M-1} \to \R^M$ determined by control points $\bm p_{\bm d} \in \R^M$ $(\bm d \in \N_D^M)$:
\begin{align}\label{eq:Bezier-simplex}
\bm b(\bm t) := \sum_{\bm d\in\N_D^M} \binom{D}{\bm d} \bm t^{\bm d} \bm p_{\bm d},
\end{align}
where $\binom{D}{\bm d}$ represents a polynomial coefficient
\[
  \binom{D}{\bm d} := \frac{D!}{d_1! d_2! \cdots d_M!}.
\]

\subsection{Approximation Theorem}
Is the B\'ezier simplex a suitable model for describing the Pareto front of a simplicial problem?
Let us check that for any simplicial problem, B\'ezier simplices can approximate the Pareto front (as well as the Pareto set and the objective map between them) with arbitrary accuracy.

We begin with a more general proposition that any continuous map on a simplex can be approximated by some B\'ezier simplex as a map:
\begin{theorem}\label{thm:approximation}
Let $\bm \phi: \Delta^{M-1} \to \R^M$ be a continuous map.
There exists an infinite sequence of B\'ezier simplices $\bm b^{(i)}: \Delta^{M-1} \to \R^M$ such that
\[
\lim_{i \to \infty} \sup_{\bm t \in \Delta^{M-1}} |\bm \phi(\bm t)-\bm b^{(i)}(\bm t)| = 0.
\]
\end{theorem}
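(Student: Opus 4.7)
The plan is to recognize this as the multivariate Weierstrass/Bernstein approximation theorem on a simplex and construct the approximating sequence explicitly. The key observation is that the coefficients $\binom{D}{\bm d}\bm t^{\bm d}$ appearing in \cref{eq:Bezier-simplex} are precisely the multivariate Bernstein basis polynomials on $\Delta^{M-1}$, and by the multinomial theorem they satisfy $\sum_{\bm d \in \N_D^M}\binom{D}{\bm d}\bm t^{\bm d} = (t_1+\cdots+t_M)^D = 1$ for all $\bm t \in \Delta^{M-1}$.

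Concretely, for each degree $i := D \in \N$, I would choose the control points
\[
\bm p_{\bm d}^{(D)} := \bm \phi(\bm d / D) \qquad (\bm d \in \N_D^M).
\]
Each $\bm d / D$ lies in $\Delta^{M-1}$ because $\sum_m d_m/D = 1$, so these values are well defined. This yields the candidate B\'ezier simplex
\[
\bm b^{(D)}(\bm t) = \sum_{\bm d \in \N_D^M} \binom{D}{\bm d} \bm t^{\bm d}\, \bm \phi(\bm d/D),
\]
which is the classical multivariate Bernstein polynomial of $\bm \phi$. The claim reduces to showing $\bm b^{(D)} \to \bm \phi$ uniformly on $\Delta^{M-1}$ as $D \to \infty$.

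For the convergence argument, I would work componentwise and give the standard probabilistic proof. Fix $\bm t \in \Delta^{M-1}$ and let $\bm D = (D_1,\dots,D_M)$ be a multinomial random vector with $D$ trials and probabilities $\bm t$; then $\bm b^{(D)}(\bm t) = \E[\bm \phi(\bm D/D)]$, so
\[
\bm b^{(D)}(\bm t) - \bm \phi(\bm t) = \sum_{\bm d \in \N_D^M} \binom{D}{\bm d}\bm t^{\bm d}\bigl(\bm \phi(\bm d/D) - \bm \phi(\bm t)\bigr).
\]
Since $\Delta^{M-1}$ is compact, $\bm \phi$ is uniformly continuous: given $\varepsilon > 0$, pick $\delta > 0$ with $|\bm \phi(\bm s) - \bm \phi(\bm t)| < \varepsilon$ whenever $|\bm s - \bm t| < \delta$. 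Split the sum into the ``close'' indices $|\bm d/D - \bm t| < \delta$ (contributing at most $\varepsilon$) and the ``far'' indices. The far part is bounded using Chebyshev's inequality together with the coordinatewise variance estimate $\Var(D_m/D) = t_m(1-t_m)/D \le 1/(4D)$, which is independent of $\bm t$. This gives a bound of the form $2\|\bm \phi\|_\infty \cdot M/(4D\delta^2)$, which tends to $0$ uniformly in $\bm t$.

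The main obstacle is making the tail estimate \emph{uniform in $\bm t$} rather than pointwise; this is exactly where the $\bm t$-independent variance bound $t_m(1-t_m)\le 1/4$ is essential. Once this is in hand, choosing $D$ large enough makes both pieces simultaneously less than $\varepsilon$ for every $\bm t \in \Delta^{M-1}$, proving uniform convergence and hence the theorem.
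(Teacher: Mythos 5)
Your proof is correct, but it takes a genuinely different route from the paper. The paper proves the theorem non-constructively via the Stone--Weierstrass theorem: it first shows that the set $S_D$ of B\'ezier functions of degree $D$ coincides with the space of all polynomials of total degree at most $D$ in $t_1,\dots,t_M$ (by a dimension count), deduces that the union $S$ over all degrees is a unital subalgebra of $C(\Delta^{M-1})$ that separates points, and then applies Stone--Weierstrass coordinatewise to get density. Your argument instead constructs the sequence explicitly by taking the control points to be $\bm p_{\bm d}^{(D)} = \bm\phi(\bm d/D)$, i.e.\ the multivariate Bernstein operator, and proves uniform convergence by the classical probabilistic splitting: uniform continuity on the compact simplex handles the indices with $|\bm d/D - \bm t| < \delta$, and Chebyshev's inequality with the $\bm t$-independent bound $\mathrm{Var}(D_m/D) = t_m(1-t_m)/D \le 1/(4D)$ handles the tail uniformly in $\bm t$ (your tail constant should pick up an extra factor of $M$ or so from the union bound over coordinates, depending on the norm, but this does not affect convergence). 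Both proofs are sound. The paper's argument is shorter once Stone--Weierstrass is granted, and its intermediate fact $S_D = S'_D$ is of independent structural interest; yours buys an explicit approximating sequence with concrete control points (natural in a paper about fitting, since it says which B\'ezier simplex to use) and, with the modulus of continuity of $\bm\phi$, even a quantitative rate of convergence, neither of which the Stone--Weierstrass route provides.
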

The proof of \Cref{thm:approximation} is shown in Appendix A.
Recall \Cref{def:simplicial} that a simplicial problem $\f:X\to\R^M$ admits a triangulation of the Pareto set $\bm \phi : \Delta^{M-1} \to X^*(\f)$, which induces a triangulation of the Pareto front $\f \circ \bm \phi : \Delta^{M-1} \to \f X^*(\f)$.
These triangulations can be $\bm \phi$ in \Cref{thm:approximation}.
Addtionally, the restricted objective map $\f:X^*(\f)\to\f X^*(\f)$ has the graph
\[
G^*(\f) = \Set{(\x,\f(x))\in\R^L\times\R^M | \x\in X^*(\f)},
\]
and $\bm \phi : \Delta^{M-1} \to X^*(\f)$ also induces a triangulation of the graph
\[
\bm \phi \times (\f \circ \bm \phi): \Delta^{M-1} \to G^*(\f)
\]
since $\f:X^*(\f)\to\f X^*(\f)$ is continuous by definition and any continuous map induces a homeomorphism from the domain to the graph.
We thus get the desired result.
\begin{corollary}\label{cor:approximation}
Let $X^*$ be the Pareto set, the Pareto front or the graph of the objective map restricted to the Pareto set of a simplicial problem.
There exists an infinite sequence of B\'ezier simplices $\bm b^{(i)}: \Delta^{M-1} \to \R^M$ such that
\[
\lim_{i \to \infty} d_{\mathrm H}(X^*, B^{(i)}) = 0
\]
where $d_{\mathrm H}$ is the Hausdorff distance and $B^{(i)}$ are images of B\'ezier simplices: $B^{(i)} := \bm b^{(i)}(\Delta^{M-1})$.
\end{corollary}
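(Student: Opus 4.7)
The plan is to deduce the corollary directly from Theorem 1 by exhibiting each candidate $X^*$ as the image of a continuous map on $\Delta^{M-1}$ and then converting uniform convergence of approximating B\'ezier simplices into Hausdorff convergence of their images.

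First I would unpack Definition 1 to extract the relevant continuous surjection $\bm \psi : \Delta^{M-1} \to X^*$. If $X^* = X^*(\bm f)$, take $\bm \psi = \bm \phi$, the triangulation provided by the definition. If $X^* = \bm f X^*(\bm f)$, take $\bm \psi = \bm f \circ \bm \phi$, which is again a triangulation by Definition 1. If $X^* = G^*(\bm f)$, take $\bm \psi = \bm \phi \times (\bm f \circ \bm \phi)$; this is continuous because each factor is, and it is a homeomorphism onto the graph, since any continuous map gives rise to a homeomorphism from its domain onto its graph. In every case I obtain a continuous surjection from $\Delta^{M-1}$ onto $X^*$.

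Next I would apply Theorem 1 to $\bm \psi$. A minor wrinkle is that Theorem 1 is literally stated for maps into $\R^M$, whereas $X^*(\bm f) \subseteq \R^L$ and $G^*(\bm f) \subseteq \R^{L+M}$. However, its statement and proof are insensitive to the codimension of the target, since a B\'ezier simplex approximation can be carried out coordinate by coordinate. I would simply invoke the obvious $\R^N$-valued analogue and obtain a sequence of B\'ezier simplices $\bm b^{(i)}$ converging to $\bm \psi$ uniformly on $\Delta^{M-1}$.

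The remaining, and only substantive, step is the translation from uniform convergence of maps to Hausdorff convergence of images. Given $\varepsilon > 0$ and $i$ large enough that $\sup_{\bm t \in \Delta^{M-1}} \| \bm \psi(\bm t) - \bm b^{(i)}(\bm t) \| < \varepsilon$, the identity pairing $\bm t \leftrightarrow \bm t$ shows that every point of $X^* = \bm \psi(\Delta^{M-1})$ lies within $\varepsilon$ of $B^{(i)}$ and every point of $B^{(i)}$ lies within $\varepsilon$ of $X^*$, so both one-sided Hausdorff distances, and hence $d_{\mathrm H}(X^*, B^{(i)})$, are at most $\varepsilon$. I do not foresee any real obstacle here: Theorem 1 carries all of the analytic content, the simplicial hypothesis supplies the required triangulations essentially for free, and the Hausdorff-versus-uniform translation is entirely standard.
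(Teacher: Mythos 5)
Your proposal is correct and follows essentially the same route as the paper: use the triangulations $\bm\phi$, $\bm f\circ\bm\phi$, and $\bm\phi\times(\bm f\circ\bm\phi)$ supplied by the simplicial hypothesis as the continuous map in \Cref{thm:approximation}, then pass from uniform convergence to Hausdorff convergence via the pairing $\bm\psi(\bm t)\leftrightarrow\bm b^{(i)}(\bm t)$ (a step the paper folds into its appendix proof of the theorem). Your explicit remark that the target dimension must be allowed to differ from $M$ for the Pareto set and the graph is a point the paper glosses over, but it does not change the argument.
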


\subsection{All-at-Once Fitting}\label{sec:all-at-once-fitting}
Let us consider algorithms to realize such a sequence of B\'ezier simplices.
First, we develop a straightforward generalization of the B\'ezier curve fitting method.
We call this method the \emph{all-at-once fitting}.

Given sample points $\bm x_1, \ldots, \bm x_N \in \R^M$, a B\'ezier simplex can be fitted by solving the following problem, which is a multi-dimensional analogue of the problem \cref{eq:least-squares}:
\begin{equation}\label{eq:least-squares_bezier-simplex}
\begin{split}
  \underset{\bm{t}_n,~\bm p_{\bm d}}{\text{minimize }} &\sum_{n=1}^N \norm{\bm b(\bm t_n) - \bm x_n}^2\\
  \text{subject to }
  & \bm t_n \in \Delta^{M-1}~(n=1,\ldots, N)
\end{split}
\end{equation}
where $\bm{t}_n=(t_{n1},\ldots,t_{nM})~(n=1,\ldots,N)$ and $\bm p_{\bm d}~(\bm d \in \N^M_D)$ are variables to be optimized.

As is the case of B\'ezier curve fitting, if one fix all control points $\bm p_{\bm d}~(\bm d \in \N^M_D)$, the B\'ezier simplex $\bm b(\bm t)$ is determined.
The error function \Cref{eq:least-squares_bezier-simplex} can be now separately optimized by minimizing $\norm{\bm b(\bm t_n) - \bm x_n}^2$ with respect to each $\bm t_n$.
The solution $\bm t_n$ is the foot of a perpendicular line from a sample point $\bm x_n$ to the B\'ezier simplex $\bm b(\Delta^{M-1})$, which satisfies
\begin{equation}\label{eq:nonlinear_equations}
\inprod{\frac{\partial}{\partial t_m}\Big|_{\bm t = \bm t_n} \bm b(\bm t)}{\bm b(\bm t_n) - \bm x_n} = 0~~(m=1,\ldots,M).
\end{equation}
Since \Cref{eq:nonlinear_equations} is a system of nonlinear equations, Newton's method is used to find the solution $\bm t_n$.

If one fix all parameters $\bm t_n~(n=1,\ldots,N)$, the B\'ezier simplex $\bm b(\bm t_n)$ is a linear function with respect to the control points $\bm p_{\bm d}~(\bm d \in \N^M_D)$.
The error function \Cref{eq:least-squares_bezier-simplex} can be now optimized by solving linear equations with respect to all $\bm p_{\bm d}~(\bm d \in \N^M_D)$.

As well as Borges and Pastva's method, the all-at-once fitting alternatively adjusts parameters and control points.
We describe the all-at-once fitting in \Cref{alg:allatonce}.
This algorithm is also used to adjust some of the control points with remaining control points fixed.

\begin{algorithm}[t]
\caption{B\'ezier simplex fitting (all-at-once)}
\label{alg:allatonce}
\begin{algorithmic}[1]
\STATE (Initialize)~Set $i \gets 1$ and initial control points $\bm p_{\bm d}^{(i)}~(\bm d \in \N_D^M)$.
\WHILE{not converged}
    \STATE (Update parameters) Fix control points $\bm p_{\bm d}^{(i)}~(\bm d \in \N_D^M)$ and solve \Cref{eq:nonlinear_equations} for each $n = 1, \ldots, N$ using Newton's method. Then set the solutions as $\bm t_n^{(i+1)}$.
    \STATE (Update control points)~Solve \Cref{eq:least-squares_bezier-simplex} with respect to the control points and set the solutions $\bm p_{\bm d}^{(i+1)}~(\bm d \in \N_D^M)$.
    \STATE $i \gets i+1$.
\ENDWHILE
\RETURN $\bm p_{\bm d}^{(i)}~(\bm d \in \N_D^M)$
\end{algorithmic}
\end{algorithm}

While a B\'ezier simplex has enough flexibility as described in \Cref{thm:approximation}, the required sample size for the all-at-once fitting grows quickly with respect to $M$.
Let us consider the case of fitting a B\'ezier simplex of degree $D$ to the Pareto front of an $M$-objective problem.
In this case, the number of control points to be estimated is
\[
\card{\N_D^M} = \binom{D+M-1}{D} = \frac{(D+M-1)!}{D!(M-1)!} = O(M^D).
\]
Practically, the degree of a B\'ezier simplex to be fitted is often set as $D=3$; nevertheless the number of control points to be estimated at a time becomes unreasonable for large $M$.

\subsection{Inductive Skeleton Fitting}\label{sec:inductive-skeleton-fitting}
To reduce the required sample size, we consider decomposing a B\'ezier simplex into subsimlices and fitting each subsimplex one by one from low dimension to high dimension.
This approach allows us to reduce the number of control points to be estimated at a time.
We call this method the \emph{inductive skeleton fitting}.

Let $I := \Set{1, \ldots, M}$ and for each non-empty subset $J \subseteq I$, we define
\[
\N_D^J := \Set{(d_1, \dots, d_M) \in \N_D^M | d_m = 0~(m \not\in J)}.
\]
The $J$-face of an $(M-1)$-B\'ezier simplex of degree $D$ is a map $\Delta^J \to \R^M$ determined by control points $\bm p_{\bm d}\in \R^M$ for all $\bm d\in\N_D^J$:
\begin{align}\label{eq:Bezier-simplex-face}
\bm b^J(\bm t) := \sum_{\bm d\in\N_D^J} \binom{D}{\bm d} \bm t^{\bm d} \bm p_{\bm d}.
\end{align}

For arbitrary parameters $\bm t$ satisfying
\[
\bm t \in \Delta^J \subseteq \Delta^I,
\]
all entries that are not in $J$ are 0.
It then holds that
\[
\bm b^I(\bm t) = \sum_{\bm d\in \N_D^I} \binom{D}{\bm d} \bm t^{\bm d} \bm p_{\bm d} = \sum_{\bm d\in \N_D^J} \binom{D}{\bm d} \bm t^{\bm d} \bm p_{\bm d} = \bm b^J(\bm t).
\]
This means that, for each non-empty subset $J \subset I$, the $J$-face of the $(M-1)$-B\'ezier simplex of degree $D$ is the $(\card{J}-1)$-B\'ezier simplex of the same degree and it is determined by the control points $\bm p_{\bm d}$ of the $(M-1)$-B\'ezier simplex satisfying $\bm d \in \N_D^J$.
To exploiting this structure, the inductive skeleton fitting decomposes a B\'ezier simplex into subsimplices and fits a subsimplex $\bm b^J(\bm t)$ to $X_J$ for each non-empty subset $J \subseteq I$ in ascending order of cardinality of $J$.

When a problem $\bm f=(f_1,\ldots,f_M): X \to \R^M$ is simplicial, we can provide a set of subsamples for running the inductive skeleton fitting.
Remember that the Pareto front $\bm fX^*(\bm f)$ has the same skeleton as the $(M-1)$-simplex: $\bm fX^*(\bm f_J)\subseteq \bm fX^*(\bm f_I)$ for all $\emptyset \ne J \subseteq I$.
Thus given a sample $X$ of $\bm fX^*(\bm f)$, we can decompose it into subsamples
\[
X_J :=\Set{\bm x \in X | \bm f_J(y)\not \dominates \bm f_J(x)~\text{for all}~\bm y \in X},
\]
each of which represents a sample of the $J$-face $\bm fX^*(\bm f_J)~(\emptyset \ne J \subseteq I)$.
\Cref{alg:skeleton} summarizes the inductive skeleton fitting.

\begin{algorithm}[t]
\caption{B\'ezier simplex fitting (inductive skeleton)}
\label{alg:skeleton}
\begin{algorithmic}[1]
\FOR{$m = 1, \dots, \min \Set{D,M}$}
	\FOR{$J \subseteq I$ such that $\card{J} = m$}
		\STATE Fix control points $\bm p_{\bm d}$ $(\bm d \in \bigcup_{\emptyset \ne K \subset J} \N_D^K)$ as estimated in previous steps.
		\STATE Adjust remaining control points $\bm p_{\bm d}$ $(\bm d \in \N_D^J \setminus (\bigcup_{\emptyset \ne K \subset J}\N_D^K))$ by \Cref{alg:allatonce} with sample $X_J$.
	\ENDFOR
\ENDFOR
\RETURN $\bm p_{\bm d}~(\bm d \in \N_D^M)$
\end{algorithmic}
\end{algorithm}

Unlike the all-at-once fitting, the inductive skeleton fitting allows us to reduce the number of control points to be estimated at a time.
Let us consider the case of fitting a subsimplex $\bm b^J(\bm t)$ with $\card{J} = m$.
As we described before, the subsimplex $\bm b^J(\bm t)$ has $\card{\N^m_D} = \binom{D + m - 1}{D} = \frac{(D + m - 1)!}{D! (m-1)!}$ control points.
In practice, it is sufficient to set $D$ as a small value compared to $M$.
In such a case, the inductive skeleton fitting estimates at most $D$-objective solutions, then we only have to adjust at most $\max_{m = 1, \ldots, D}\binom{D + m - 1}{D}$ control points for each step.
Notice that this number does not depend on $M$ but on $D$.
Therefore, in case of the inductive skeleton fitting, the number of control points to be estimated at a time is much smaller than  $\binom{D+M-1}{D}$ for large $M$.

\section{Numerical Experiments}\label{sec:experiments}
The small sample behavior of the proposed method is examined using Pareto front samples of varying size.\footnote{The source code is available from \url{https://github.com/rafcc/stratification-learning}.}

\subsection{Data Sets}
To investigate the effect of the Pareto front shape and dimensionality, we employed six synthetic problems with known Pareto fronts, all of which are simplicial problems.
Schaffer, ConstrEx and Osyczka2 are two-objective problems.
Their Pareto fronts are a curved line that can be triangulated into two vertices and one edge.
3-MED and Viennet2 are three-objective problems.
Their Pareto fronts are a curved triangle that can be triangulated into three vertices, three edges and one face.
5-MED is a five-objective problem.
Its Pareto front is a curved pentachoron that can be triangulated into five vertices, ten edges, ten faces, five three-dimensional faces and one four-dimensional face.
We generated Pareto front samples of 3-MED and 5-MED by AWA(objective) using default hyper-parameters~\cite{Hamada10}.
Pareto front samples of the other problems were taken from jMetal~5.2~\cite{Nebro15}.

To assess the practicality of the proposed method, we also used one real-world problem called S3TD.
This is a four-objective problem\footnote{The original problem is five-objective, but we dropped one objective since there are two objectives that are highly correlated.} of designing a silent super-sonic aircraft.
The Pareto front has not been exactly known, and we only have an inaccurate sample of 58 points obtained in the prior study~\cite{Chiba09}.
Its simpliciality is also unknown.
If it is assumed to be simplicial, then the Pareto front is a curved tetrahedron that can be triangulated into four vertices, six edges, four faces, one three-dimensional face.
The problem definitions of the synthetic and real-world problems are described in Appendix B.

For each problem, the Pareto front sample is split into a training set and a validation set.
The training set is further decomposed into subsamples for the Pareto fronts of subproblems: each $m$-objective subproblem has a subsample consisting of $N_m$ points.
Experiments were conducted on all combinations of the following subsample sizes:\footnote{We fix $N_4 = N_5 = 0$ since all methods used in our experiments do not require these subsamples to fit the Pareto front.}
\begin{align*}
N_1&=1,\\
N_2&=2,\dots,10,\\
N_3&=1,\dots,10\quad (N_4=N_5=0).
\end{align*}
An $M$-objective problem has $M$ single-objective problems, $M (M-1) / 2$ two-objective problems and $M (M-1) (M-2) / 6$ three-objective problems.
The total sample size of the training set is
\[
N = MN_1 + M(M-1)N_2/2 + M(M-1)(M-2)N_3/6.
\]
The details of making Pareto front subsamples are described in Appendix C.

\subsection{Methods}
We compared the following surface-fitting methods:
\begin{itemize}
  \item the inductive skeleton fitting~(\Cref{alg:skeleton});
  \item the all-at-once fitting~(\Cref{alg:allatonce});
  \item the response surface method~\cite{Goel07}.
\end{itemize}
The inductive skeleton fitting used each training subsample for each subproblem.
The all-at-once fitting and the response surface method used all training subsamples as a whole.
We set the initial control points $\bm p_{\bm d}^0$ $(\bm d \in \N_D^J, \ J \subset I, \ \card{J} = 1)$, which are the vertices of the B\'ezier simplex, to be the single-objective optima, and the rest of control points were set to be the simplex grid spanned by them.
Newton's method used in the inductive skeleton fitting and the all-at-once fitting employed the first and second analytical derivatives, and was terminated when the number of iterations reached 100 or when the following condition is satisfied:
\[
\sqrt{\sum_{m = 1}^M \inprod{\frac{\partial}{\partial t_m}\Big|_{\bm t = \bm t_n} \bm b(\bm t)}{\bm b(\bm t_n) - \bm x_n}^2} \le 10^{-5}.
\]
The fitting algorithm was terminated when the number of iterations reached 100 or when the following condition is satisfied:
\[
 \sqrt{\mathrm{SSR}_{i+1}}/N - \sqrt{\mathrm{SSR}_{i}}/N \le 10^{-5},
\]
where $\mathrm{SSR}_i$ is the value of the loss function \Cref{eq:least-squares_bezier-simplex} at the $i$-th iteration.

According to a prior study~\cite{Goel07}, the response surface method treated multi-dimensional objective values as a function from the first $M-1$ objective values to the last objective value by using multi-polynomial regression with cubic polynomials.
We removed cross terms of degree three so that the number of explanatory variables did not exceed the sample size.

\begin{table*}[t]
\centering
\tiny
\caption{GD and IGD (avg.\ $\pm$ s.d.\ over 20 trials) with subsample size $(1,3)$ for Schaffer, ConstrEx and Osyczka2; $(1,2,1)$ for 3-MED, Viennet2 and 5-MED; $(1,5,5)$ for S3TD. The best scores are shown in bold. (${}^{**}: p<0.05, {}^*: p< 0.1$)}\label{tbl:results}
{\tabcolsep = 1.4mm
\begin{tabular}{ccllcllllcc}
\toprule
\textbf{Problem}&\multicolumn{3}{c}{\textbf{Inductive skeleton}}&\multicolumn{3}{c}{\textbf{All-at-once}}&\multicolumn{2}{c}{\textbf{Response surface}}\\%&\multicolumn{2}{c}{\textbf{Response surface (Quadratic)}}\\
&\multicolumn{1}{c}{Iteration}&\multicolumn{1}{c}{GD}&\multicolumn{1}{c}{IGD}&\multicolumn{1}{c}{Iteration}&\multicolumn{1}{c}{GD}&\multicolumn{1}{c}{IGD}&\multicolumn{1}{c}{GD}&\multicolumn{1}{c}{IGD}\\%&GD&IGD\\
\midrule
Schaffer	&3.00 	$\pm$	0.00 	&\textbf{2.50e-10	$\pm$	3.76e-11}	&\textbf{2.49e-02	$\pm$	2.49e-12}	&3.00 	$\pm$	0.00 	&2.51e-10	$\pm$	3.71e-11	&\textbf{2.49e-02	$\pm$	2.52e-12}	&5.26e-01	$\pm$	1.78e+00	&8.13e-02	$\pm$	5.54e-02\\
ConstrEx	&3.00 	$\pm$	0.00 	&2.33e-02	$\pm$	2.32e-02	&4.14e-02	$\pm$	1.24e-02	&3.00 	$\pm$	0.00 	&2.34e-02	$\pm$	2.32e-02	&4.13e-02	$\pm$	1.24e-02	&\textbf{1.47e-02	$\pm$	9.15e-03}$^{**}$	&\textbf{2.48e-02	$\pm$	9.24e-03}$^{**}$\\
Osyczka2	&3.45 	$\pm$	1.12 	&\textbf{6.02e-02	$\pm$	2.72e-02}	&\textbf{8.27e-02	$\pm$	3.49e-02}	&3.20 	$\pm$	0.51 	&6.08e-02	$\pm$	2.86e-02	&8.33e-02	$\pm$	3.70e-02	&2.79e-01	$\pm$	3.34e-01	&1.01e-01	$\pm$	4.22e-02\\
\midrule
3-MED	&2.45 	$\pm$	0.50 	&\textbf{3.99e-01	$\pm$	9.77e-01}$^{** }$	&\textbf{6.16e-02	$\pm$	2.51e-02}$^{**}$	&3.00 	$\pm$	0.00 	&1.02e+00	$\pm$	2.84e+00	&1.05e-01	$\pm$	6.43e-02	&1.39e+00	$\pm$	2.38e-01	&6.75e-02	$\pm$	5.37e-03\\
Viennet2	&2.65 	$\pm$	0.48 	&\textbf{2.51e+00	$\pm$	3.73e+00}$^{** }$	&\textbf{6.47e-02	$\pm$	9.91e-03}	&3.68 	$\pm$	0.98 	&2.24e+09	$\pm$	9.40e+09	&2.42e-01	$\pm$	1.02e-01	&3.10e+00	$\pm$	2.58e+00	&6.89e-02	$\pm$	1.73e-02\\
5-MED	&2.60 	$\pm$	0.49 	&\textbf{2.55e-01	$\pm$	3.13e-01}$^{**}$	&\textbf{7.94e-02	$\pm$	1.40e-02}$^{**}$	&3.00 	$\pm$	0.00 	&4.19e+00	$\pm$	1.24e+01	&1.66e-01	$\pm$	4.01e-02	&4.89e+00	$\pm$	7.52e-01	&1.02e-01	$\pm$	4.89e-03\\
\midrule
S3TD	&3.25 	$\pm$	1.58 	&\textbf{2.04e-01	$\pm$	4.04e-02}$^{*}$	&1.37e-01	$\pm$	3.15e-02	&36.20 	$\pm$	33.81 	&5.58e-01	$\pm$	5.99e-01	&6.31e-01	$\pm$	6.36e-01	&7.17e-01	$\pm$	2.14e-01	&\textbf{9.88e-02	$\pm$	6.67e-03}$^{**}$\\
\bottomrule
\end{tabular}
}
\end{table*}

\subsection{Performance Measures}\label{sec:evaluation}
To evaluate how accurately an estimated hyper-surface approximates the Pareto front, we used the generational distance (GD)~\cite{Veldhuizen99} and the inverted generational distance (IGD)~\cite{Zitzler03a}:
\begin{align*}
\mathrm{GD}(X, Y) &:=\frac{1}{\card{X}} \sum_{\bm x \in X} \min_{\bm y \in Y} \norm{\bm x-\bm y},\\
\mathrm{IGD}(X, Y)&:=\frac{1}{\card{Y}} \sum_{\bm y \in Y} \min_{\bm x \in X} \norm{\bm x-\bm y},
\end{align*}
where $X$ is a finite set of points sampled from an estimated hyper-surface and $Y$ is a validation set.

\begin{figure}[t]
\centering%
\subfloat[GD assesses false positive.]{\includegraphics[width=0.5\hsize]{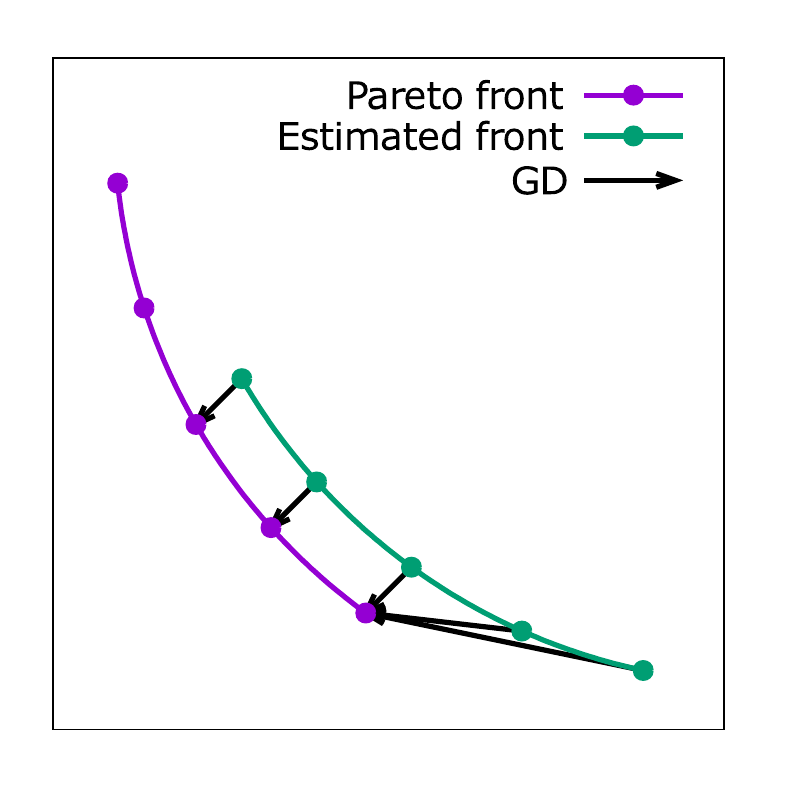}\label{fig:GD}}
\subfloat[IGD assesses false negative.]{\includegraphics[width=0.5\hsize]{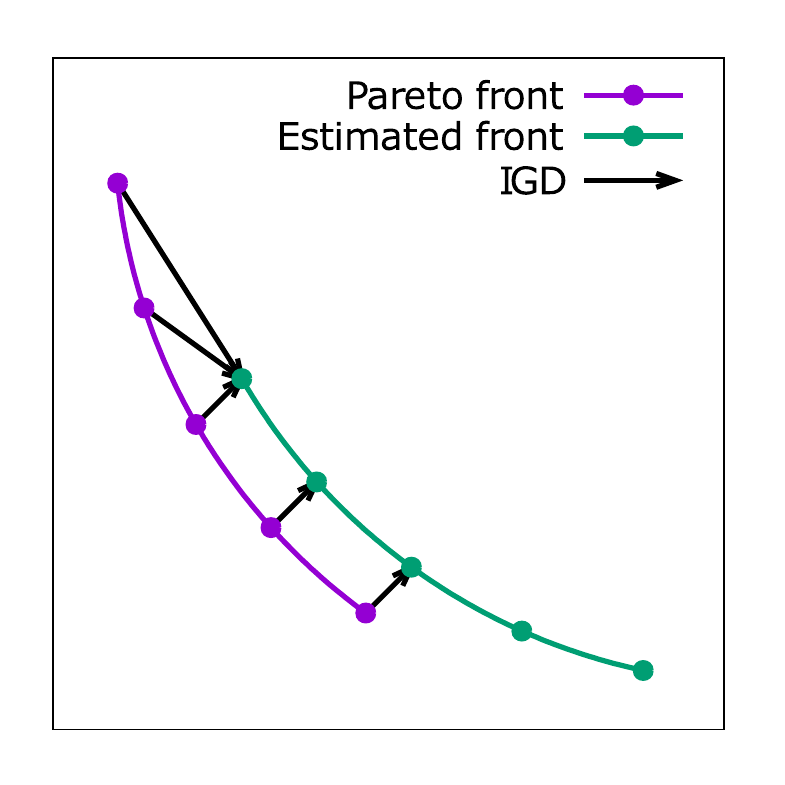}\label{fig:IGD}}
\caption{GD and IGD.\label{fig:GD-IGD}}
\end{figure}

\Cref{fig:GD-IGD} depicts what GD and IGD assess.
These values can be viewed as the avarage length of arrows in each plot.
\Cref{fig:GD} implies that GD becomes high when the estimated front has a false positive area which is far from the Pareto front.
Conversely, \Cref{fig:IGD} tells that IGD becomes high when the Pareto front has a false negative area which is not covered by the estimated front.
Thus, we can say that the estimated hyper-surface is close to the Pareto front if and only if both GD and IGD are small.

For the all-at-once fitting and the inductive skeleton fitting, we sampled the estimated B\'ezier simplex as follows:
\[
X := \Set{\bm b(\bm t) | \bm t \in \Delta^{M-1},\ t_m \in \Set{0, \frac{1}{20}, \frac{2}{20}, \ldots, 1}}.
\]
For the response surface method, we sampled the estimated response surface as follows:
\begin{align*}
X := \{ & (x_1,\ldots,x_{M-1},r(x_1,\ldots,x_{M-1}))\\
        & | x_1,\ldots,x_{M-1} \in \{0,\frac{1}{20},\frac{2}{20},\ldots,1\} \},
\end{align*}
where $r(x_1,\ldots,x_{M-1})$ is an estimated polynomial function.
We repeated experiments 20 times\footnote{We set the number of trials on the basis of the power analysis. According to our preliminary experiments, the effect size of GD/IGD values was approximately $d=0.6$. In this case, the sample size $n=20$ is required to assume the power $>0.8$ with significance level $p=0.05$.} with different training sets and computed the average and the standard deviations of their GDs and IGDs.

\subsection{Results}
For each problem and method, the average and the standard deviation of the GD and IGD are shown in \Cref{tbl:results}.
In \Cref{tbl:results}, we highlighted the best score of GD and IGD out of all methods for each problem and added the results of Mann-Whitney's one-tail U-test to check the best score was smaller than that of the other methods significantly.
In case of conducting multiple tests, we corrected p-value by Holm's method.

For B\'ezier simplex fitting methods, the number of iterations until termination is also shown.
The table shows that the fitting was finished in approximately three iterations in all cases except for the all-at-once fitting on S3TD.

\paragraph{Inductive skeleton vs.\ all-at-once}
For all the two-objective problems, both methods obtained almost identical GD and IGD values.
For three- and five-objective problems, the inductive skeleton fitting significantly outperformed the all-at-once fitting in both GD and IGD\@.
Especially in Viennet2, the all-at-once fitting exhibited unstable behavior in GD while the inductive skeleton fitting was stable.

\paragraph{Inductive skeleton vs.\ response surface}
The inductive skeleton fitting achieved better IGDs in Schaffer, Osyczka2, 3-MED, and 5-MED but worse values in ConstrEx and Viennet2.
In terms of the average GD, the inductive skeleton fitting was better in all the problems except ConstrEx.

\section{Discussion}\label{sec:discussion}
In this section, we discuss approximation accuracy, required sample size, practicality for real-world data and objective map approximation of our method.
\begin{figure*}[t]
\centering%
\subfloat[Inductive skeleton]{\includegraphics[width=0.27\hsize]{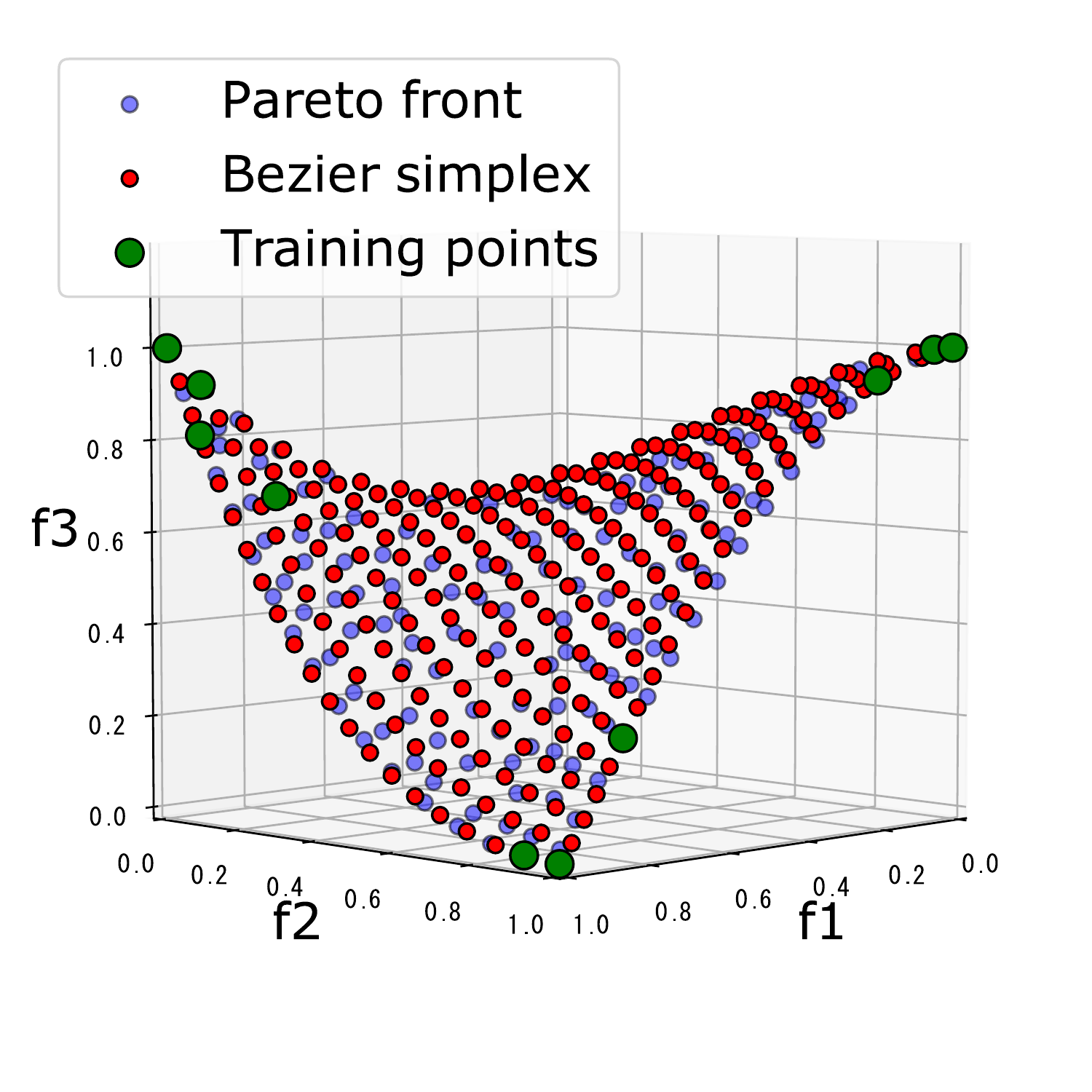}}
\hspace{10mm}
\subfloat[All-at-once]{\includegraphics[width=0.27\hsize]{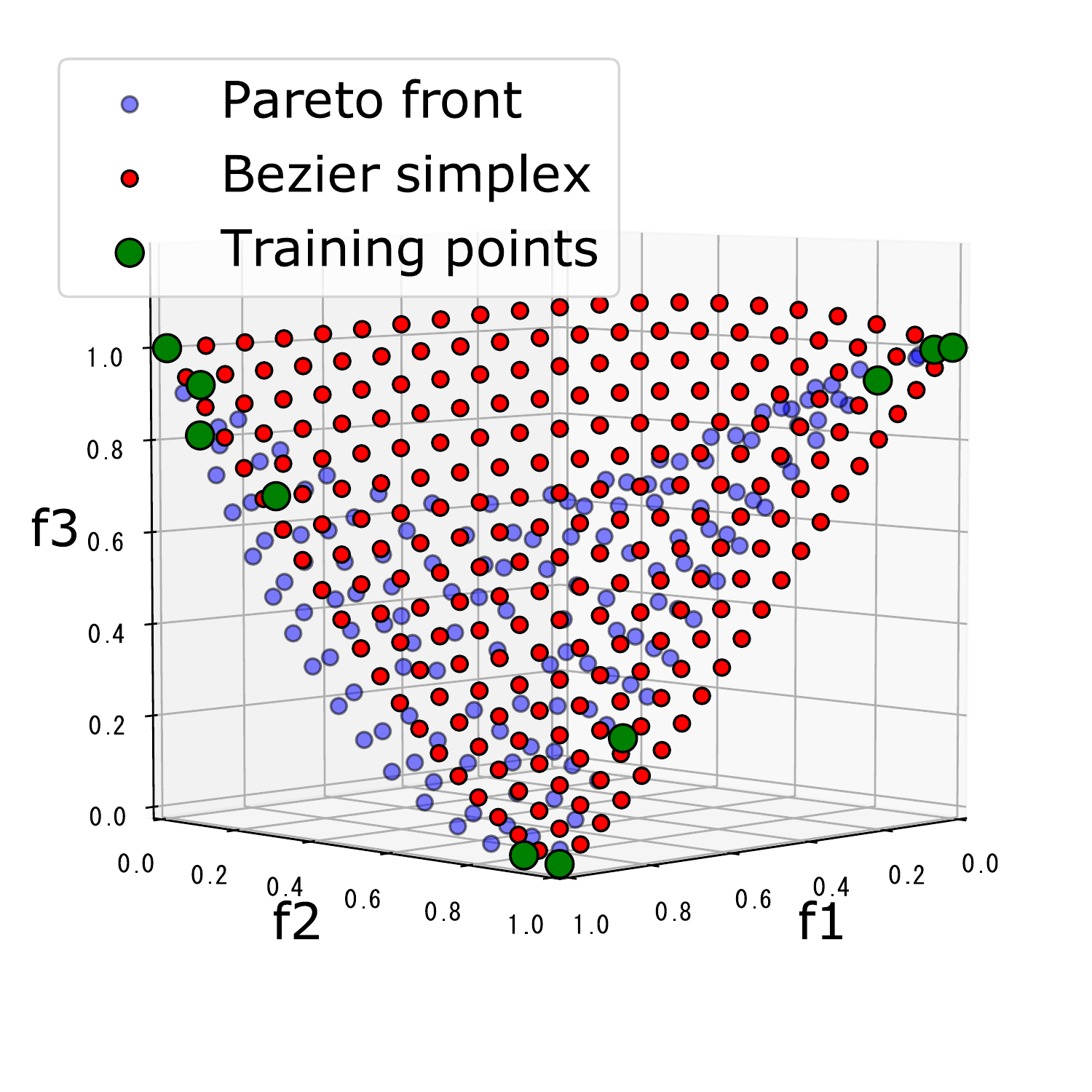}}
\hspace{10mm}
\subfloat[Response surface]{\includegraphics[width=0.27\hsize]{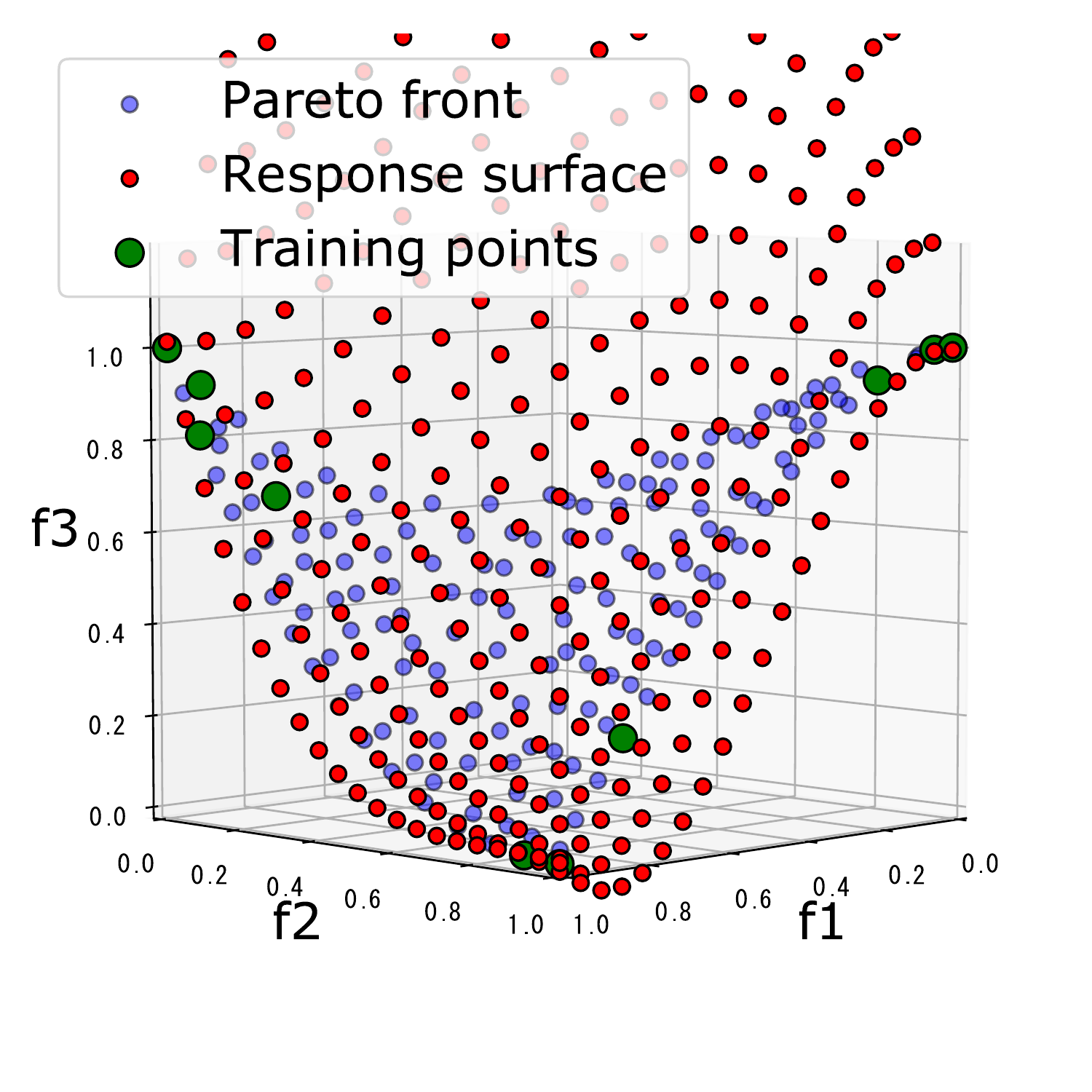}}
\caption{B\'ezier triangles for 3-MED with sample size $(1, 2, 1)$.}\label{fig:3-MED}
\end{figure*}
\begin{figure*}[t]
\centering%
\includegraphics[width=0.49\hsize]{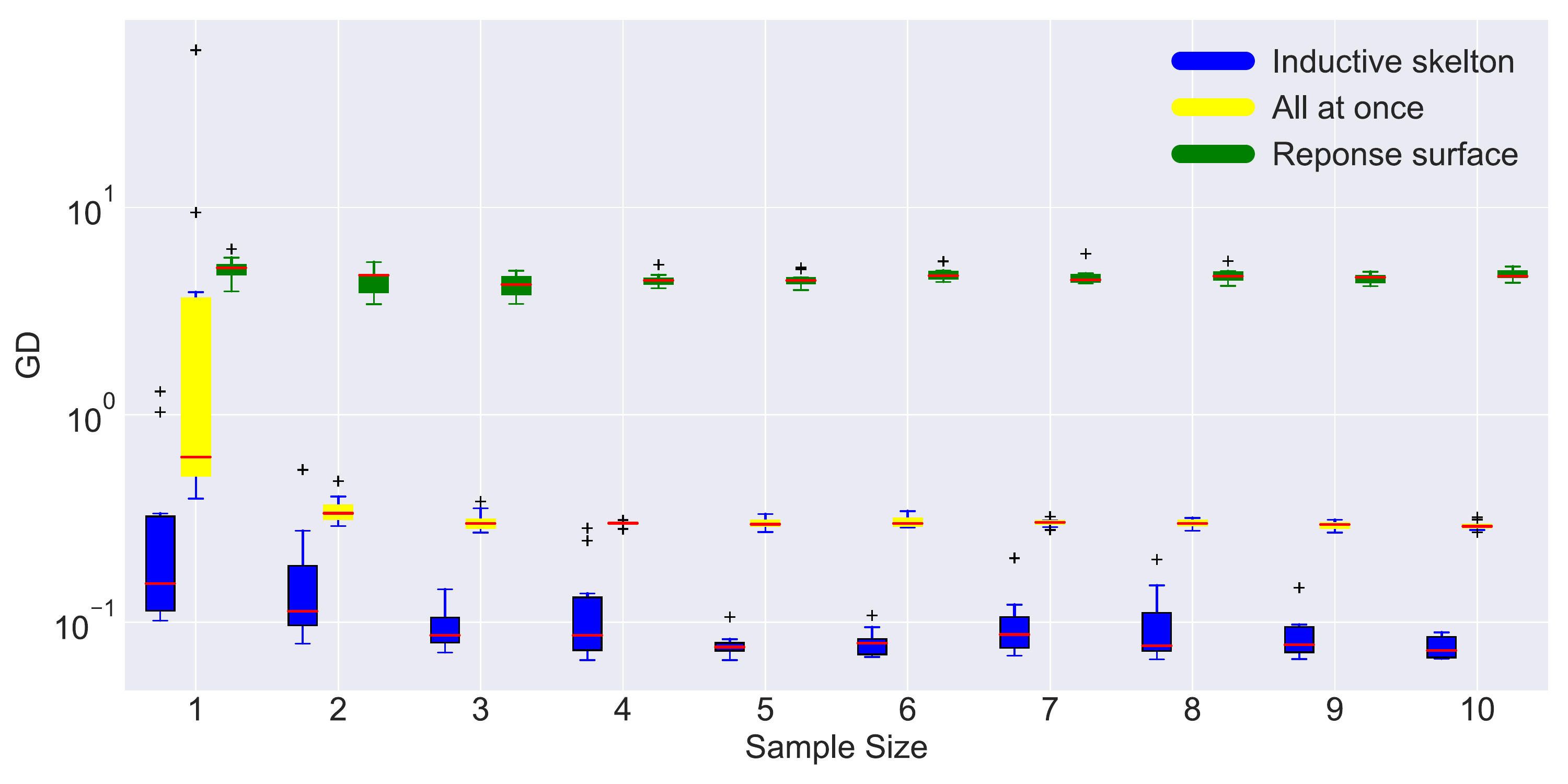}
\includegraphics[width=0.49\hsize]{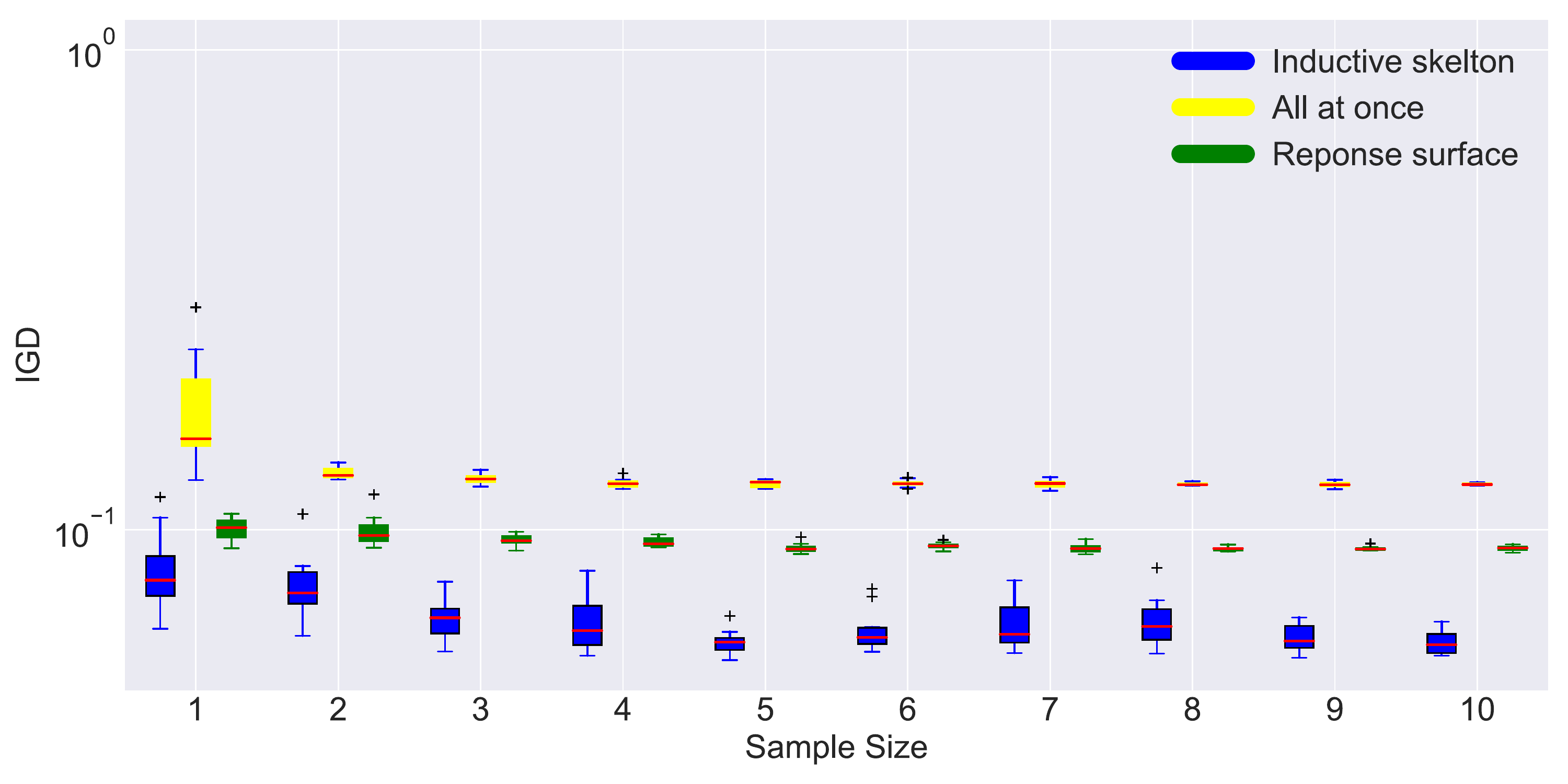}
\caption{Sample size $N_3$ vs.\ GD/IGD on 5-MED with sample size $(1,2,N_3)$ (boxplot\ over ten trials).}\label{fig:sample-size-MED5}

\includegraphics[width=0.49\hsize]{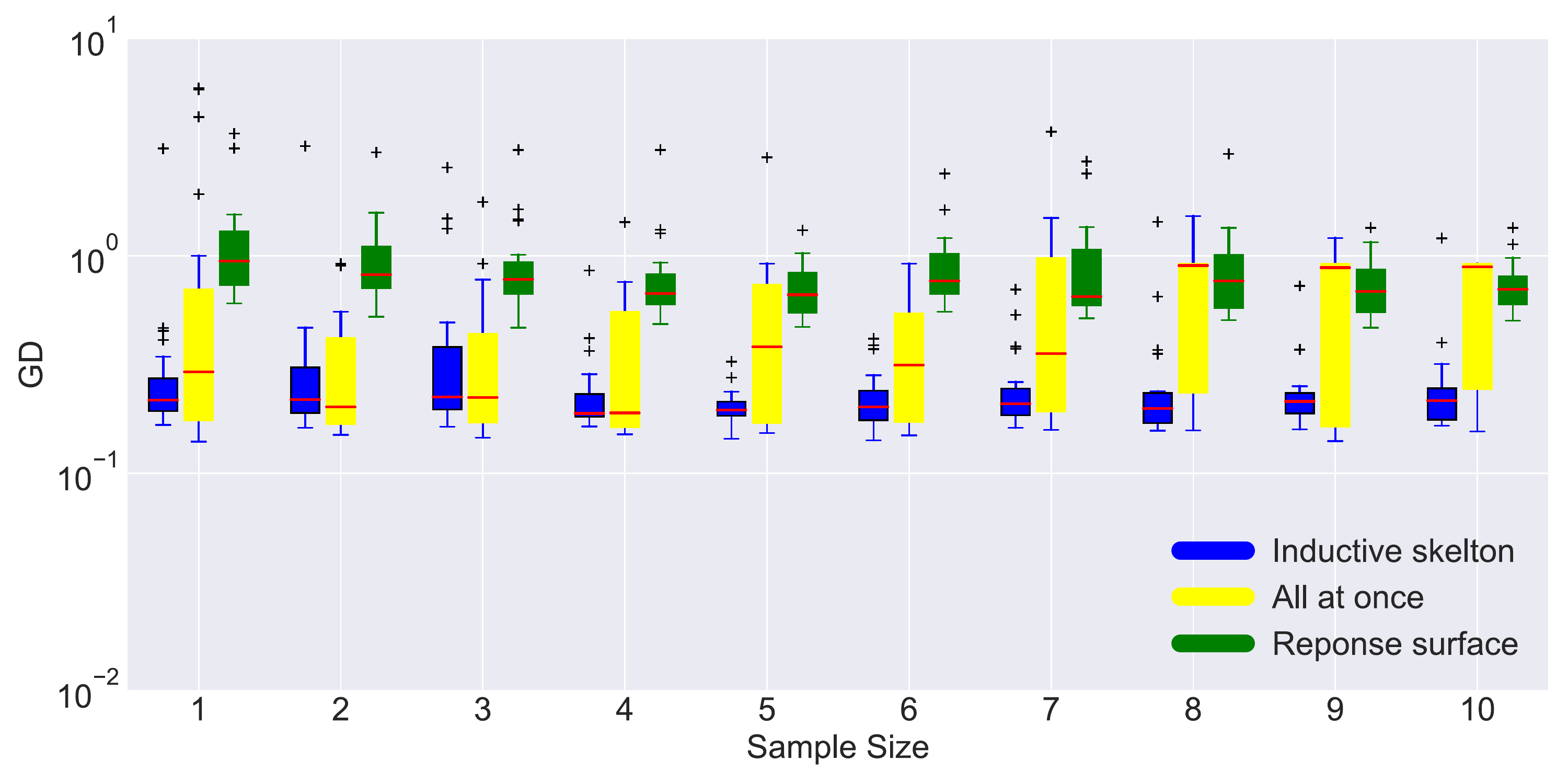}
\includegraphics[width=0.49\hsize]{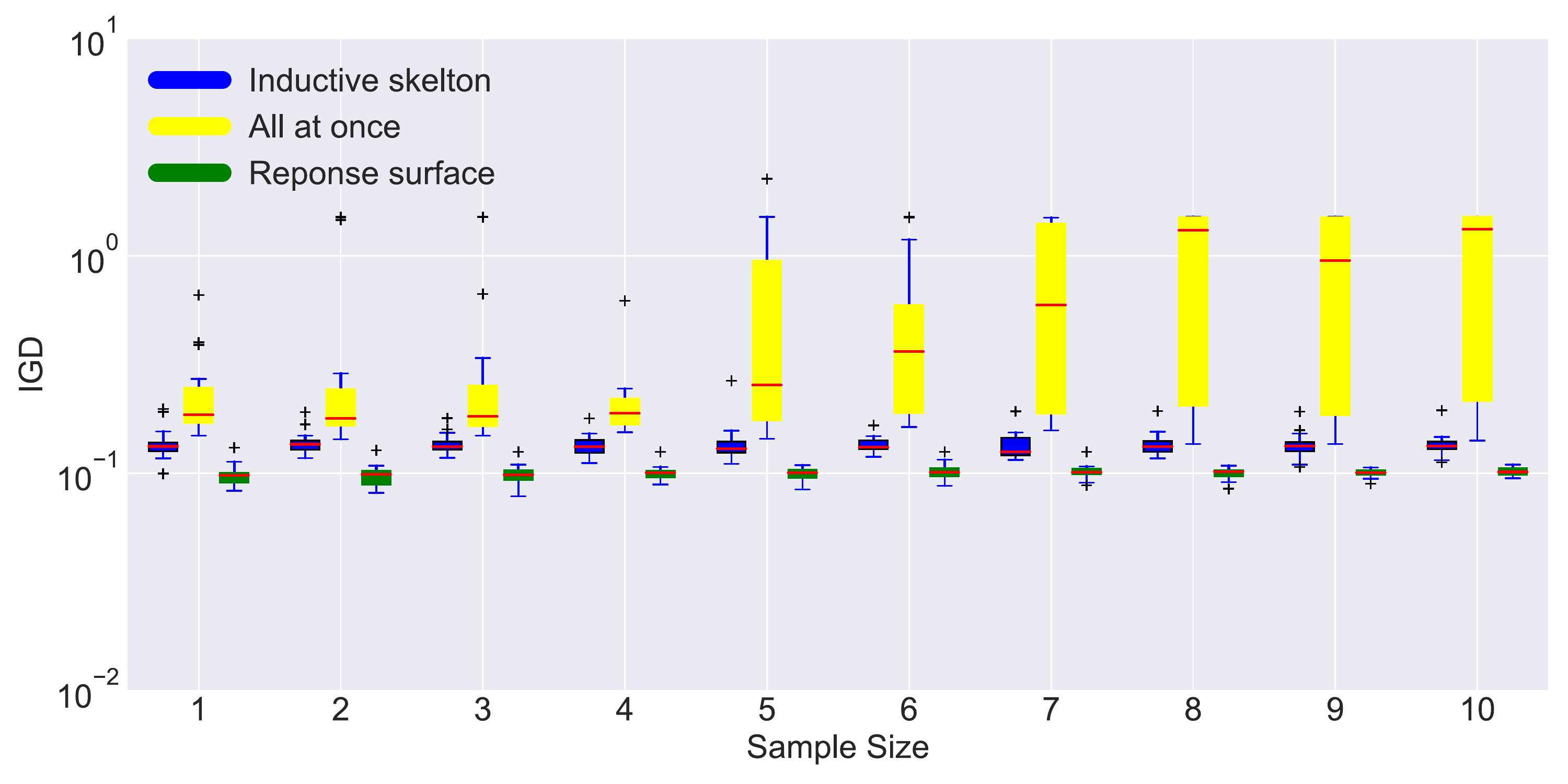}
\caption{Sample size $N_3$ vs.\ GD/IGD on S3TD with sample size $(1,5,N_3)$ (boxplot\ over ten trials).}\label{fig:sample-size-s3td}
\end{figure*}

\subsection{Approximation Accuracy}
For three- and five-objective problems, the inductive skeleton fitting achieved slightly better IGDs.
The inductive skeleton fitting adjusts a small subset of control points at each step with already-adjusted control points of the faces.
This reduces the number of parameters estimated at a time, which seems to prevent over-fitting.

More significant differences can be seen in GD\@.
\Cref{fig:3-MED} shows the cause of these defferences:
The all-at-once fitting obtained an overly-spreading B\'ezier simplex while the inductive skeleton fitting found an exactly-spreading B\'ezier simplex.
Minimizing the squared loss \Cref{eq:least-squares_bezier-simplex} only imposes that all sample points are close to the B\'ezier simplex, which leads to a good IGD\@.
However, it does not impose that all B\'ezier simplex points are close to the sample points, which is needed to achieve a good GD\@.
The inductive skeleton fitting stipulates that each face of the B\'ezier simplex must be close to each face of the Pareto front (i.e., the Pareto front of each subproblem), which minimizes GD\@.

Similarly to the all-at-once fitting, the response surface has poor GDs.
As we do not know the true Pareto set, it is difficult for grid sampling to obtain an exactly-spreading surface.

ConstrEx and Viennet2 are the only problems where the inductive skeleton fitting was worse than the response surface method.
ConstrEx is a non-smooth curve that cannot be fitted by a single B\'ezier curve (see \Cref{fig:Schaffer} in Appendix D).
This type of Pareto fronts leads to a challenging problem: developing a method for gluing multiple B\'ezier simplices to express a non-smooth surface.
Viennet2 is a smooth surface but its curvature is severely sharp (see \Cref{fig:Viennet2} in Appendix D).
Although Viennet2 is defined by quadratic functions (see Appendix B), its Pareto front cannot be fitted by the B\'ezier simplex of degree three.
This means that setting the degree of the B\'ezier simplex grater than or equal to the maximum degree in the problem definition does not ensure approximation accuracy.
It would be useful to develop a way to understand the required degree of the B\'ezier simplex from the problem definition.

\subsection{Required Sample Size}
To achieve good accuracy, how many sample points does the B\'ezier simplex require?
\Cref{fig:sample-size-MED5} shows transitions of GD and IGD on 5-MED when the sample size of each three-objective subproblem (two-dimensional face) varies $N_3 = 1, \dots, 10$ with fixed $N_1 = 1$ and $N_2 = 2$.
Surprisingly, both GD and IGD have already converged at $N_3 = 4$.

%As we described in \Cref{sec:inductive-skeleton-fitting}, for the inductive skeleton fitting, the number of control points to be estimated depends on $D$ for each subproblem.
%Let us consider the case of $D=3$ for S3TD, whichi is a 4-objective ($M=4$) problem.
In case of $D=3$ for 5-MED, which is a five-objective ($M=5$) problem, the number of all control points to be esitimated is $\binom{3+5-1}{5} = 35$.
While the all-at-once fitting fits them simultaneously, in case of the induuctive skeleton method, we only have to fit two and one control points for each two- and three-objective subproblem respectively.%, which is much smaller than the whole.
%that number is 2 for 2-objective subproblems and 1 for 3-objective subpromlems, both of which are small compared to the number of whole control points.
This result demonstrates that the inductive skeleton fitting allows us to fit Pareto fronts with small samples by reducing the number of control points to be estimated at a time.

\subsection{Practicality for Real-World Data}
% S3TD
Now we discuss the practicality of the inductive skeleton fitting.
Firstly, we focus on the performance of S3TD, which is a real-world problem.
\Cref{tbl:results} indicates that the average IGD of the inductive skeleton fitting on S3TD was worse than that of the response surface method.
However, the IGD itself is an order of $10^{-1}$, which small in absolute sense considering the number of the objective functions ($M=4$) and the sample size of the validation set ($N=58$) on S3TD.
The average GD of the inductive skeleton method was smaller than that of the other methods and the difference was significant with significance level $p=0.1$.
\Cref{fig:sample-size-s3td} shows transitions of GD and IGD on S3TD when the sample size of each three-objective subproblem (two-dimensional face) varies $N_3 = 1, \dots, 10$ with fixed $N_1 = 1$ and $N_2 = 5$.
We can observe that both GD and IGD already converged at $N_3 = 4$ as \Cref{fig:sample-size-MED5}.
These results suggest that the inductive skeleton fitting can work properly in real-world problems.

% degenerate simplicial test
In real-world problems, it seems to be difficult to know that the Pareto set/front is a simplex and in some cases, it is degenerated.
However, a statistical test has been proposed to judge whether the Pareto set/front of a given problem is a simplex or not~\cite{Hamada18}.
This test tells us whether we can apply the inductive skeleton fitting to real-world problems.
Furthermore, even if the Pareto front is degenerated, \Cref{thm:approximation} states the Bezier simplex model can fit degenerate Pareto sets/fronts.

\subsection{Objective Map Approximation}
As well as the Pareto front approximation discussed and demonstrated so far, \Cref{thm:approximation} ensures that our method can be used for approximating the Pareto set, $X^*(\f)$, and the restricted objective map, $\f: X^*(\f)\to\f X^*(\f)$.
Such approximations may provide richer information about the problem.
To check the performance of graph approximation, we made a sample of solution-objective pairs of 5-MED $X = \Set{(\x,\f(\x))\in \R^5 \times \R^5 | \x\in X^*(\f)}$ and applied the inductive skeleton fitting to it.
Other settings were the same as the Pareto front approximation.
\Cref{tbl:results_2} shows the GD and IGD values where the results of the all-at-once fitting are just for baseline.
As the inductive skeleton fitting achieved GD and IGD of $10^{-1}$ order of magnitude, which means that it accurately approximates the graph of the restricted objective map in the abslute sense.
In 5-MED, the graph of the restricted objective map on the Pareto set is a four-dimensional topological simplex in a 10-dimensional space.
Although the codimension is six times higher than the case of Pareto front approximation, we have similar GD and IGD to ones of 5-MED in \Cref{tbl:results}.
This fact implies that the approximation error mainly depends on the intrinsic dimensionality rather than the ambient dimensionality.

\begin{table*}[h]
\centering
\tiny
\caption{GD and IGD (avg.\ $\pm$ s.d.\ over 20 trials) with sample size $(1,2,1)$ for the graph of 5-MED.}\label{tbl:results_2}
\begin{tabular}{ccccccccccc}
\toprule
\multicolumn{3}{c}{\textbf{Inductive skeleton}}&\multicolumn{3}{c}{\textbf{All-at-once}}\\%&\multicolumn{2}{c}{\textbf{Response surface (Quadratic)}}\\
Iteration&GD&IGD&Iteration&GD&IGD\\%&GD&IGD\\
\midrule
2.84 $\pm$ 0.36 & 3.36e-1 $\pm$ 9.48e-2 & 2.05e-1 $\pm$ 3.45e-2 & 3.00 $\pm$ 0.00 & 1.38e-00 $\pm$ 2.00e-00 & 2.79e-01 $\pm$ 8.21e-02 \\
\bottomrule
\end{tabular}
\end{table*}

\section{Conclusions}
In this paper, we have proposed the B\'ezier simplex model and its fitting algorithm for approximating Pareto fronts of multi-objective optimization problems.
An approximation theorem of the B\'ezier simplex has been proven.
Numerical experiments have shown that our fitting algorithm, the inductive skeleton fitting, obtains exactly-spreading B\'ezier simplices over synthetic Pareto fronts of different shape and dimensionality.
It has been also observed that a real-world problem with four objectives but only 58 points is accurately fitted.
The proposed model and its fitting algorithm drastically reduce the sample size required to describe the entire Pareto front.

The current algorithm has some drawbacks.
The inductive skeleton fitting requires that each subproblem must have a non-empty sample, which may be too demanding for some practice.
Furthermore, the loss function for fitting does not take into account of the sampling error of the Pareto front.
To get a more stable approximation under wild samples, we plan to extend the (deterministic) Bezier simplex to a probabilistic model.

We believe that the potential applications of the method are not limited to the use in post-optimization process.
It will be introduced in evolutionary algorithms and dynamically evolved to accelerate search.
We also expect that this model can be applied to regression problems that have complex boundary conditions such as learning from multi-labeled data.

\section{Acknowledgements}
We wish to thank Dr. Yuichi Ike for checking the proof and making a number of valuable suggestions.

\bibliographystyle{aaai}
\bibliography{ref}

\appendix
\section{Appendix A: Proof of \Cref{thm:approximation}}
In this section, we prove that B\'ezier simplices can approximate pareto sets of simplicial problems in any accuracy.
Let $\bm b: \Delta^{M-1} \to \R^M$ be a B\'ezier simplex.% and $B$ is the image of the B\'ezier simplex: $B:=\bm b(\Delta^{M-1})$.
We say this function $\bm b: \Delta^{M-1} \to \R^M$ B\'ezier maps and the projections to its coordinates $pr \circ \bm b: \Delta^{M-1} \to \mathbb{R}$ B\'ezier functions.
Namely,
\[
\bm b = (b_1, \ldots, b_M)
\]
where $b_1, \ldots, b_M$ are B\'ezier functions.

\let\temp\thetheorem
\renewcommand{\thetheorem}{\ref{thm:approximation}}

\begin{theorem}\label{thm:main}
Let $\bm \phi: \Delta^{M-1} \to \R^M$ be a $C^0$-embedding.
There exists an infinite sequence of B\'ezier simplices $\bm b^{(i)}: \Delta^{M-1} \to \R^M$ such that
\[
\lim_{i \to \infty} \sup_{\bm t \in \Delta^{M-1}} |\bm \phi(\bm t)-\bm b^{(i)}(\bm t)| = 0.
\]
\end{theorem}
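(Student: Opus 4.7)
The plan is to exhibit the approximating sequence explicitly, using the values of $\bm \phi$ sampled on a regular grid inside $\Delta^{M-1}$ as control points, i.e., the multivariate Bernstein operator on the simplex. For each degree $D \ge 1$, set
\[
\bm b^{(D)}(\bm t) := \sum_{\bm d \in \N_D^M} \binom{D}{\bm d}\, \bm t^{\bm d}\, \bm \phi\!\left(\tfrac{\bm d}{D}\right),
\]
where $\bm d/D := (d_1/D, \ldots, d_M/D) \in \Delta^{M-1}$. By definition this is a B\'ezier simplex of degree $D$ with control points $\bm p_{\bm d} = \bm \phi(\bm d/D)$. Since convergence in $\R^M$ is coordinatewise and norms on $\R^M$ are equivalent, it suffices to prove uniform convergence for each scalar component $\phi_m: \Delta^{M-1} \to \R$. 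Hence the whole problem reduces to the classical statement that for every continuous $\phi: \Delta^{M-1} \to \R$, the multivariate Bernstein polynomial $B_D[\phi](\bm t) := \sum_{\bm d \in \N_D^M} \binom{D}{\bm d}\, \bm t^{\bm d}\, \phi(\bm d/D)$ converges to $\phi$ uniformly on $\Delta^{M-1}$.

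\textbf{Reduction to the scalar Bernstein theorem.} First I would record two ingredients. (i) The Bernstein basis is a partition of unity: $\sum_{\bm d \in \N_D^M} \binom{D}{\bm d}\, \bm t^{\bm d} = (t_1+\cdots+t_M)^D = 1$ for $\bm t \in \Delta^{M-1}$; so $\phi(\bm t) - B_D[\phi](\bm t) = \sum_{\bm d} \binom{D}{\bm d}\,\bm t^{\bm d}\bigl(\phi(\bm t) - \phi(\bm d/D)\bigr)$. (ii) The coefficients $\binom{D}{\bm d}\, \bm t^{\bm d}$ are exactly the probability mass function of a multinomial random vector $\bm X \sim \mathrm{Mult}(D, \bm t)$ at value $\bm d$, so $B_D[\phi](\bm t) = \mathbb E[\phi(\bm X/D)]$. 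In particular, $\mathbb E[\bm X/D] = \bm t$ and, componentwise, $\mathrm{Var}(X_m/D) = t_m(1-t_m)/D \le 1/(4D)$.

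\textbf{Uniform-continuity and Chebyshev estimate.} Given $\varepsilon>0$, compactness of $\Delta^{M-1}$ gives uniform continuity of $\phi$: choose $\delta>0$ so that $|\bm s - \bm t|<\delta$ implies $|\phi(\bm s)-\phi(\bm t)|<\varepsilon/2$, and let $K := \sup_{\bm t \in \Delta^{M-1}}|\phi(\bm t)|<\infty$. Split the sum defining $\phi(\bm t) - B_D[\phi](\bm t)$ into indices with $|\bm d/D - \bm t|<\delta$ and those with $|\bm d/D - \bm t|\ge \delta$. The first piece is bounded by $\varepsilon/2$ since the Bernstein weights sum to at most $1$. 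For the second piece, Chebyshev's inequality applied to $\bm X/D$ yields
\[
\sum_{|\bm d/D - \bm t|\ge\delta}\binom{D}{\bm d}\bm t^{\bm d} \;=\; \Pr\bigl(|\bm X/D-\bm t|\ge\delta\bigr) \;\le\; \frac{\sum_{m=1}^M \mathrm{Var}(X_m/D)}{\delta^2} \;\le\; \frac{M}{4D\delta^2},
\]
so that piece is at most $2K \cdot M/(4D\delta^2)$, which is $<\varepsilon/2$ once $D$ is large enough (independently of $\bm t$). Hence $\sup_{\bm t}|\phi(\bm t)-B_D[\phi](\bm t)|<\varepsilon$ for all large $D$, giving uniform convergence.

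\textbf{Conclusion and main obstacle.} Applying this component\-wise to $\bm \phi = (\phi_1,\ldots,\phi_M)$ shows that the sequence $\bm b^{(D)}$ above satisfies $\sup_{\bm t}|\bm \phi(\bm t)-\bm b^{(D)}(\bm t)|\to 0$. I would take $\bm b^{(i)} := \bm b^{(D_i)}$ for any $D_i \to \infty$. Note that the embedding hypothesis in the appendix version is not needed; mere continuity suffices, which matches the statement of \Cref{thm:approximation} in the main text. The only subtle step is the Chebyshev-style tail estimate on the multinomial weights; this is the main technical ingredient, and the rest is a routine reduction from the vector\-valued to the scalar case and a repackaging of the standard Bernstein argument in simplex coordinates.
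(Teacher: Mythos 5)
Your proof is correct, but it takes a genuinely different route from the paper. The paper's argument is non-constructive: it shows that the set of B\'ezier functions of arbitrary degree is a unital subalgebra of $C(\Delta^{M-1})$ that separates points (the key observation being that the degree-$D$ B\'ezier functions coincide with \emph{all} polynomials of total degree at most $D$, by a dimension count), and then invokes the Stone--Weierstrass theorem to get density, applied coordinatewise. You instead exhibit the approximating sequence explicitly as the multivariate Bernstein operator on the simplex, with control points $\bm p_{\bm d}=\bm\phi(\bm d/D)$, and prove uniform convergence by the classical probabilistic argument (partition of unity, multinomial weights, uniform continuity plus a Chebyshev tail bound of order $M/(4D\delta^2)$). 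All the steps check out: the variance computation, the splitting into near and far indices, and the coordinatewise reduction are standard and correctly executed. What your approach buys is constructivity and a quantitative handle: the control points are concrete samples of $\bm\phi$ on a grid, and the argument yields a convergence rate in terms of the modulus of continuity, neither of which the Stone--Weierstrass route provides. What the paper's approach buys is brevity (given Stone--Weierstrass as a black box) and the structural fact that B\'ezier functions form a graded algebra exhausting all polynomials, which is of independent interest for the fitting algorithms. You are also right that only continuity of $\bm\phi$ is needed, not the $C^0$-embedding hypothesis stated in the appendix version; the paper's own proof likewise never uses injectivity, and the main-text statement of the theorem assumes only continuity.
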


\let\thetheorem\temp
\addtocounter{theorem}{-1}

\begin{remark}
By definition, if $X^*$ is the Pareto set of a simplicial problem, then there is a homeomorphism $\bm \phi : \Delta^{M-1} \to X^*$.
Therefore, \Cref{thm:main} implies that Pareto sets of simplicial problems can be approximated by B\'ezier simplices.
\end{remark}

To prove this theorem, we use the following theorem by Stone-Weierstrass.
Let us prepare some notations before stating Stone-Weierstrass theorem.
Let $X$ be a compact metric space.
Let $C(X)$ be the set of continuous functions over $X$.
We regard $C(X)$ as a Banach algebra with the sup-norm,
\[
\norm{f}_{\infty} = \sup_{\bm t \in X} \abs{f(\bm t)}.
\]
A subalgebra of $C(X)$ is a vector subspace $S$ such that for any $f, g \in S$, $f \cdot g \in S$.
We say that a subalgebra $S$ separates points if for any different points $\bm x, \bm y \in X,$ there exists $f \in S$ so that $f(\bm x) \neq f(\bm y)$.
We say that $S$ is unital if $S$ contains $1$.

\begin{theorem}[Stone-Weierstrass]\label{thm:stone}
Let $S$ be a unital subalgebra of $C(X)$.
Assume $S$ separates any two points in $X$.
Then $S$ is dense in $C(X)$, namely $\bar{S}=C(X)$.
\end{theorem}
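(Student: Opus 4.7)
The plan is to follow the classical proof, which proceeds in four stages: pass to the closure, upgrade the algebra to a lattice via approximation of the absolute value, build separating functions with prescribed values at two points, and conclude by a compactness double sweep.

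First I would replace $S$ by its closure $\bar S$ in $C(X)$. Because addition, scalar multiplication, and multiplication are continuous with respect to the sup-norm on the bounded set $\bar S \cap \{f : \norm{f}_\infty \le R\}$, $\bar S$ is again a unital subalgebra. It separates points since $S$ already does. The goal is therefore to show $\bar S = C(X)$.

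The core analytic step is to show that $f \in \bar S$ implies $|f| \in \bar S$. The hard part is the only nontrivial estimate in the whole proof: approximating $s \mapsto |s|$ uniformly on a bounded interval by polynomials in $s$ with no constant coefficient manipulation required beyond what the algebra allows. I would obtain such polynomials by the recursion $p_0(u) = 0$, $p_{n+1}(u) = p_n(u) + \tfrac{1}{2}\bigl(u - p_n(u)^2\bigr)$, check by induction that $0 \le p_n(u) \le \sqrt u$ and that $p_n$ is increasing in $n$ on $[0,1]$, and invoke Dini's theorem on the compact interval $[0,1]$ to get uniform convergence $p_n \to \sqrt{\cdot}$. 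Substituting $u = s^2$ and rescaling, for any $R > 0$ we get polynomials $q_n$ with $q_n(s) \to |s|$ uniformly on $[-R, R]$. Applied to any $f \in \bar S$ with $\norm{f}_\infty \le R$, the functions $q_n(f)$ lie in $\bar S$ (since $\bar S$ is a unital algebra) and converge uniformly to $|f|$, so $|f| \in \bar S$. The identities
\[
\max(f,g) = \tfrac{1}{2}\bigl(f+g+|f-g|\bigr), \qquad \min(f,g) = \tfrac{1}{2}\bigl(f+g-|f-g|\bigr)
\]
then show that $\bar S$ is closed under finite lattice operations.

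Next I would use unitality and point separation to produce, for any two distinct $x, y \in X$ and any prescribed values $a, b \in \R$, a function $g_{x,y} \in \bar S$ with $g_{x,y}(x) = a$ and $g_{x,y}(y) = b$. Choosing $h \in \bar S$ with $h(x) \ne h(y)$, the explicit formula
\[
g_{x,y} = a \cdot 1 + (b - a) \cdot \frac{h - h(x)\cdot 1}{h(y) - h(x)}
\]
works and lies in $\bar S$ by the algebra and unital properties.

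Finally I would run the standard compactness double sweep. Given $f \in C(X)$ and $\varepsilon > 0$, choose $g_{x,y} \in \bar S$ with $g_{x,y}(x) = f(x)$ and $g_{x,y}(y) = f(y)$ for each pair of points. For fixed $x$, the open sets $U_y := \{z \in X : g_{x,y}(z) < f(z) + \varepsilon\}$ cover $X$ by continuity; compactness gives a finite subcover, and the minimum $f_x$ of the corresponding $g_{x,y}$'s lies in $\bar S$, satisfies $f_x < f + \varepsilon$ everywhere, and still agrees with $f$ at $x$. Then the open sets $V_x := \{z \in X : f_x(z) > f(z) - \varepsilon\}$ cover $X$; a finite subcover yields $F := \max f_{x}$ in $\bar S$ with $f - \varepsilon < F < f + \varepsilon$ uniformly. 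Hence $\norm{F - f}_\infty \le \varepsilon$, and since $\varepsilon$ was arbitrary and $\bar S$ is closed, $f \in \bar S$, so $\bar S = C(X)$. The only substantive obstacle is the polynomial approximation of $|s|$; once that is in hand, everything else is algebraic manipulation plus one standard use of compactness.
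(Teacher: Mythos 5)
Your proposal is a correct and complete proof of the real Stone--Weierstrass theorem, following the classical route: pass to the closure, show it is a lattice by approximating $|s|$ with the polynomial recursion $p_{n+1}(u)=p_n(u)+\tfrac12(u-p_n(u)^2)$ (the induction $0\le p_n\le\sqrt{u}$ and the appeal to Dini's theorem are both sound), interpolate prescribed values at two points using unitality and separation, and finish with the min-over-a-cover, max-over-a-cover compactness sweep. I find no gaps; the only steps that deserve explicit mention are already handled (that $q_n(f)\in\bar S$ uses unitality to absorb constant terms, and that the final $F=\max_i f_{x_i}$ lies in $\bar S$ uses the lattice closure you established). The one thing to be aware of is that the paper itself offers no proof of this statement at all: it quotes Stone--Weierstrass as a classical black box and uses it only to conclude that the algebra of B\'ezier functions is dense in $C(\Delta^{M-1})$ in its proof of the approximation theorem. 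So there is no in-paper argument to compare against; you have supplied the standard textbook proof of a result the authors take for granted, which is entirely appropriate but goes beyond what the paper does.
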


We use Stone-Weierstrass theorem in the case where $X=\Delta^{M-1}$ and $S$ is the set of B\'ezier functions.
To confirm the assumptions of Stone-Weierstrass theorem, we need the following propositions.

\begin{proposition}\label{thm:alg}
Let $S$ be the set of B\'ezier functions of any degree.
Then $S$ is a unital subalgebra of $C(\Delta^{M-1})$.
\end{proposition}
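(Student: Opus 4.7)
The plan is to verify directly the three defining properties of a unital subalgebra: that $S$ contains the constant function $1$, that $S$ is a linear subspace of $C(\Delta^{M-1})$, and that $S$ is closed under pointwise multiplication.

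For the unit, I would exploit the partition-of-unity property of the Bernstein basis. By the multinomial theorem,
\[
(t_1 + \cdots + t_M)^D = \sum_{\bm d \in \N_D^M} \binom{D}{\bm d} \bm t^{\bm d},
\]
and since $\sum_m t_m = 1$ on $\Delta^{M-1}$, the right-hand side equals $1$. Hence the B\'ezier function of any degree $D$ whose scalar control points are all equal to $1$ is the constant function $1$, so $1 \in S$.

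The vector-space closure hinges on a degree-elevation lemma: a B\'ezier function $b$ of degree $D$ can be re-expressed as a B\'ezier function of degree $D+1$, by multiplying by $\sum_m t_m = 1$ and re-indexing. Concretely, the coefficient of $\bm t^{\bm d'}$ in $(\sum_m t_m)\cdot\sum_{\bm d \in \N_D^M}\binom{D}{\bm d}\bm t^{\bm d} p_{\bm d}$ is $\sum_{m:\,d'_m\ge 1}\binom{D}{\bm d'-\bm e_m}p_{\bm d'-\bm e_m}$, and dividing by $\binom{D+1}{\bm d'}$ yields new scalar control points of degree $D+1$. Iterating, any two B\'ezier functions of degrees $D_1,D_2$ can be lifted to a common degree $D\ge\max(D_1,D_2)$; in that common degree they live in the same finite-dimensional real vector space spanned by $\{\binom{D}{\bm d}\bm t^{\bm d}\}_{\bm d\in\N_D^M}$, so arbitrary linear combinations remain B\'ezier functions.

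For closure under products, I would use the observation that, viewed as a polynomial in $t_1,\dots,t_M$, a B\'ezier function of degree $D$ is exactly a homogeneous polynomial of degree $D$ in $M$ variables: each monomial $\bm t^{\bm d}$ with $\bm d\in\N_D^M$ has total degree $D$, and conversely the family $\{\bm t^{\bm d}\}_{\bm d\in\N_D^M}$ is a basis of the homogeneous-degree-$D$ part, with the nonzero multinomial coefficients $\binom{D}{\bm d}$ merely rescaling the basis. Consequently, the product of a B\'ezier function of degree $D_1$ and one of degree $D_2$ is a homogeneous polynomial of degree $D_1+D_2$, hence itself a B\'ezier function of degree $D_1+D_2$, and therefore an element of $S$.

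The only step that requires any actual bookkeeping is the degree-elevation identity, which is the main obstacle in the sense that it is the single non-formal calculation; the unit, linearity, and multiplicativity claims each follow immediately once the Bernstein-basis viewpoint and degree elevation are in hand.
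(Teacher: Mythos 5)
Your proposal is correct, and it verifies the same three properties (unit, linear subspace, closure under products) using the same underlying observation as the paper, namely that on $\Delta^{M-1}$ the relation $\sum_m t_m = 1$ lets one move between degrees. The one step where you genuinely diverge is the comparison of B\'ezier functions of different degrees: the paper identifies the span $S_D$ of the degree-$D$ Bernstein basis with the space $S'_D$ of all polynomials of degree at most $D$ (as functions on the simplex) by a dimension count, $\dim S_D = \dim S'_D = \binom{M+D-1}{M-1}$, and deduces $S_{D_1} = S'_{D_1} \subseteq S'_{D_2} = S_{D_2}$; you instead prove the inclusion constructively via the degree-elevation identity, multiplying by $\sum_m t_m$ and re-indexing to exhibit explicit degree-$(D+1)$ control points. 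Your route is self-contained and makes the role of the constraint $\sum_m t_m = 1$ explicit, which is a genuine advantage: the paper's dimension count is only valid for the restrictions to the simplex (as a subspace of $\mathbb{R}[t_1,\dots,t_M]$, the polynomials of degree at most $D$ have dimension $\binom{M+D}{M}$, not $\binom{M+D-1}{M-1}$), so the paper's phrasing is slightly imprecise where yours is not. What the paper's argument buys in exchange is the stronger structural fact $S_D = S'_D$, i.e.\ that \emph{every} polynomial of degree at most $D$ on the simplex is a degree-$D$ B\'ezier function, which is not needed for the proposition but is conceptually clarifying. Your treatment of the unit via the partition-of-unity identity and of products via homogeneity matches the paper's, only spelled out in more detail.
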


\begin{proof}
We can easily see that the product of B\'ezier functions of degree $D_1$ and $D_2$ is a B\'ezier function of degree $D_1 + D_2$.
Hence, it is enough to show that the linear combination of B\'ezier functions is a B\'ezier function again.
Since it is clear that  the linear combination of B\'ezier functions of the same degree is a B\'ezier function, it is enough to show that B\'ezier functions of degree $D_1$ can be represented by B\'ezier functions of degree $D_2$, if $D_1\leq D_2$.
To see this, let us denote by  $S_D$ the set of the B\'ezier functions whose degree is equal to $D$.
We can regard $S_D$ as a linear subspace of $\mathbb{R}[t_1, \ldots ,t_M]$.
Let $S'_D$ be the set of polynomials of degree lower than or equal to $D$ in $\mathbb{R}[t_1, \ldots ,t_M]$ (the degree is the total degree by $t_1, \ldots ,t_M$).
By the definition of B\'ezier functions, $S_D$ is contained in $S'_D$.
Since $t_1^{D_1}, \ldots, t_M^{D_M}$ are linearly independent over $\mathbb{R}$, the dimension of $S_D$ is equal to $\binom{M+D-1}{M-1}$.
However, the dimension of $S'_D$ is equal to $\binom{M+D-1}{M-1}$.
Therefore, $S_D$ is equal to $S'_D$.
Hence, $ S_{D_2} = S'_{D_2} \supset S'_{D_1} = S_{D_1}$.
$S$ is a subalgebra of $C(\Delta^{M-1})$.
Clearly $S'_0$ contains $1$.
This implies that $S$ is unital.
\end{proof}

\begin{proposition}\label{thm:sep}
Let $S$ be the set of B\'ezier functions of any degree.
Then $S$ separates points.
\end{proposition}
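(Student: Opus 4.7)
The plan is to exhibit, for any two distinct points of $\Delta^{M-1}$, a single B\'ezier function (in fact, one of degree $1$) that takes different values on them. Given $\bm x = (x_1, \ldots, x_M)$ and $\bm y = (y_1, \ldots, y_M)$ with $\bm x \neq \bm y$, pick an index $i$ such that $x_i \neq y_i$; such an index exists by the assumption $\bm x \neq \bm y$.

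Next, I would verify that the $i$-th coordinate projection $f(\bm t) = t_i$ belongs to $S$. A B\'ezier function of degree $1$ has the form
\[
\sum_{\bm d \in \N_1^M} \binom{1}{\bm d}\, \bm t^{\bm d}\, p_{\bm d} = \sum_{m=1}^M p_{\bm e_m}\, t_m,
\]
where $\bm e_m$ is the multi-index with a $1$ in position $m$ and $0$ elsewhere. Choosing control points $p_{\bm e_i} = 1$ and $p_{\bm e_m} = 0$ for $m \neq i$ realizes $f = t_i$ as a B\'ezier function of degree $1$, so $f \in S_1 \subseteq S$. (Alternatively, one could invoke the identity $S_D = S'_D$ established in the proof of Proposition~\ref{thm:alg}, which shows that every polynomial of total degree at most $D$ is a B\'ezier function of degree $D$.)

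Finally, by construction $f(\bm x) = x_i \neq y_i = f(\bm y)$, which is exactly the separation property. Since the argument is essentially just producing a linear functional that distinguishes two distinct points, there is no real obstacle; the only thing to be careful about is confirming that linear coordinate functions really do lie in $S_1$, which the computation above makes explicit.
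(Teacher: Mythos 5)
Your proof is correct and takes essentially the same route as the paper: both arguments observe that degree-$1$ B\'ezier functions are exactly the linear forms $\sum_m p_{\bm e_m} t_m$ and use one of them to separate the two points. Your version is in fact slightly more explicit than the paper's (which appeals to a general separating hyperplane), since you write down the coordinate function $t_i$ and verify directly that it lies in $S_1$.
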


\begin{proof}
Take two different points $\bm s, \bm t \in \Delta^{M-1}$.
Consider the B\'ezier functions of degree $1$.
By the definition of B\'ezier functions, B\'ezier functions of degree $1$ define arbitrary hyperplanes (or equivalently linear equations).
We can find a hyperplane $\bm b$ which contains $\bm s$ and does not contain $\bm t$.
In this case, $\bm b(\bm s) \neq 0 = \bm b(\bm t)$.
Hence $S$ separates points.
\end{proof}

Now we prove the main theorem.

\begin{proof}[proof of \Cref{thm:main}]
The Hausdorff distance is defined as
\[
d_{\mathrm H}(X,Y) := \max\Set{\sup_{\bm x \in X} \inf_{\bm y \in Y} d(\bm x, \bm y),\,\sup_{\bm y \in Y} \inf_{\bm x \in X} d(\bm x,\bm y)}.
\]
Hence, if there is a B\'ezier simplex $\bm b: \Delta^{M-1} \to \R^M$ such that for any $\bm t \in \Delta^{M-1}$, $\abs{\bm \phi(\bm t) - \bm b(\bm t)}$ is enough small, we obtain the assertion.
Let $S$ be the set of B\'ezier functions of arbitrary degree.
By \Cref{thm:alg,thm:sep}, $S$ is a unitary algebra which separates points on $\Delta^{M-1}$.
By \Cref{thm:stone}, $S$ is dense in $C(\Delta^{M-1})$.
Hence we can find B\'ezier functions $b_i$ such that $\norm{pr_i \circ \bm \phi - b_i}_{\infty}$ is enough small, where $pr_i$ denote the projection to the $i$-th coordinate.
Put $\bm b := (b_1, \ldots, b_M)$.
Then
\begin{align*}
\sup_{\bm t \in \Delta^{M-1}} \abs{\bm \phi(\bm t) - \bm b(\bm t)}
 & \le \sum_{i=1}^M \sup_{\bm t \in \Delta^{M-1}} \abs{pr_i \circ \bm \phi - b_i}\\
 & =   \sum_{i=1}^M \norm{pr_i \circ \bm \phi - b_i}_{\infty}
\end{align*}
holds.
If the right hand side is enough small, $\sup_{\bm t \in \Delta^{M-1}} \abs{\bm \phi(\bm t) - \bm b(\bm t)}$ is also enough small.
%Hence, we can take $\bm{b}: \Delta^{M-1} \to \R^M$ which is close to $X^*$ in any accuracy by the Hausdorff distanse.
\end{proof}

\section{Appendix B: Problem Definition}
\paragraph{Schaffer}
is a one-variable two-objective problem defined by:
\begin{align*}
\text{minimize }
&f_1(x) = x^2,\\
&f_2(x) = {(x-2)}^2\\
\text{subject to }&-100000 \le x \le 100000.
\end{align*}

\paragraph{ConstrEx}
is a two-variable two-objective problem defined by:
\begin{align*}
\text{minimize }
&f_1(x) = x_1,\\
&f_2(x) = \frac{1+x_2}{x_1}\\
\text{subject to }&x_2 + 9x_1 -6 \ge 0,\\
&-x_2 + 9x_1 -1 \ge 0,\\
&0.1 \le x_1 \le 1,\\
&0 \le x_2 \le 5.
\end{align*}

\paragraph{Osyczka2}
is a six-variable two-objective problem defined by:
\begin{align*}
\text{minimize }
&f_1(x) = -25{(x_1-2)}^2 - {(x_2-2)}^2\\
&\qquad\qquad - {(x_3-1)}^2 - {(x_4-4)}^2 - {(x_5-1)}^2,\\
&f_2(x) = \sum_{i=1}^6 x_i^2\\
\text{subject to }&x_1+x_2-2\ge0,\\
&6-x_1-x_2\ge0,\\
&2-x_2+x_1\ge0,\\
&2-x_1+3x_2\ge0,\\
&4-{(x_3-3)}^2-x_4\ge0,\\
&{(x_5-3)}^2+x_6-4\ge0,\\
&0 \le x_1,x_2,x_6\le 10,\\
&1 \le x_3,x_5\le 5,\\
&0 \le x_4\le 6.
\end{align*}

\paragraph{Viennet2}
is a two-variable three-objective problem defined by:
\begin{align*}
\text{minimize }
&f_1(x) = \frac{{(x_1-2)}^2}{2} + \frac{{(x_2+1)}^2}{13}+3,\\
&f_2(x) = \frac{{(x_1+x_2-3)}^2}{36} + \frac{{(-x_1+x_2+2)}^2}{8}-17,\\
&f_3(x) = \frac{{(x_1+2x_2-1)}^2}{175} + \frac{{(2x_2-x_1)}^2}{17}-13\\
\text{subject to }&-4\le x_1,x_2\le4.
\end{align*}

\paragraph{$M$-MED}
is an $M$-variable $M$-objective problem defined by:
\begin{align*}
\text{minimize }
&f_m (x) = \paren{\frac{1}{\sqrt{2}} \norm{x - e_m}}^{p_m}~&&(m=1,\ldots,M)\\
\text{subject to }
&-5.12 \le x_i\le 5.12~&&(m=1,\ldots,M)\\
\text{where }
&p_m = \exp\left(\frac{2(m-1)}{M-1} - 1\right)&&(m=1,\ldots,M),\\
& e_m = (0,\dots,0,\underbrace{1}_{m\text{-th}},0,\dots,0)&&(m=1,\ldots,M).
\end{align*}

\paragraph{S3TD}
is a 58-variable four-objective problem of designing a super-sonic airplane.
The original problem has five objectives:
\begin{enumerate}
\item Minimizing pressure drug
\item Minimizing fliction drug
\item Maximizing lift
\item Minimizing sonic boom intensity
\item Minimizing structural weight
\end{enumerate}
All objectives are converted into minimization.
We removed $f_1$ since $f_1$ and $f_4$ are strongly correlated and $f_4$ is the primary objective in this project.

\section{Appendix C: Data Description}
The description of data that we used in the numerical experiments is summarized in \Cref{tbl:datasets}:
\Cref{tbl:datasets} shows the sample size of each subproblem.
To make a data set for each problem, we chose mutually exclusive subsamples of the Pareto fronts of the subproblems.

\begin{table}[h]
\centering
\small
\caption{Sample size on the Pareto front for each subproblem.}\label{tbl:datasets}
\begin{tabular}{cccc}
\toprule
\textbf{Problem}&1-obj&2-obj&3-obj\\
\midrule
Schaffer&1 $(\times2)$&201 $(\times1)$&--\\
ConstrEx&1 $(\times2)$&679 $(\times1)$&--\\
Osyczka2&1 $(\times2)$&570 $(\times1)$&--\\
\midrule
3-MED&1 $(\times3)$&17 $(\times3)$&153 $(\times1)$\\
Viennet2&1 $(\times3)$&\begin{tabular}{c}
    811 $(f_1,f_2)$, \\ 1310 $(f_2,f_3)$,\\ 1618 $(f_3,f_1)$
\end{tabular}&8122 $(\times1)$\\
\midrule
5-MED&1 $(\times5)$&15 $(\times10)$&105 $(\times10)$\\
\midrule
S3TD&1 $(\times4)$&\begin{tabular}{c}
					19 $(f_1,f_2)$,\\

					9 $(f_1,f_3)$,\\
					21 $(f_1,f_4)$,\\
					6 $(f_2,f_3)$,\\
					13 $(f_2,f_4)$,\\
					25 $(f_3,f_4)$
				    \end{tabular}
				&\begin{tabular}{c}
					37 $(f_1,f_2,f_3)$,\\
					24 $(f_1,f_2,f_4)$,\\
					25 $(f_1,f_3,f_4)$,\\
					30 $(f_2,f_3,f_4)$
				    \end{tabular}\\
\bottomrule
\end{tabular}
\end{table}

\section{Appendix D: All Results}
\subsection{The Estimated B\'ezier Simplices and Response Surfaces for Pareto Front}
\Cref{fig:Schaffer} shows pairplots of the estimated B\'ezier simplices and response surfaces for two-objective problems with sample size $(1,3)$.
For three- and five-objective problems, the results of 3-MED, Viennet2 and 5-MED with sample size $(1,2,2)$ are shown in \Cref{fig:3-MED_appendix,fig:Viennet2,fig:5-MED}, respectively.
For a real-world problem, the results of S3TD, which is four-objective problem with sample size $(1,5,5)$ is shown in \Cref{fig:S3TD}.

\subsection{Transitions of GD and IGD}
\Cref{fig:sample-size-2D} shows boxplots of GD and IGD when we changed the sample size on the edges for two-objective problems.
The results of 3-MED, Viennet2 and 5-MED are shown in \Cref{fig:sample-size-MED,fig:sample-size-Viennet,fig:sample-size-MED5_appendix}, respectively.

\begin{figure*}[t]
\centering%
\subfloat[Inductive skeleton]{%
\includegraphics[width=0.3\hsize]{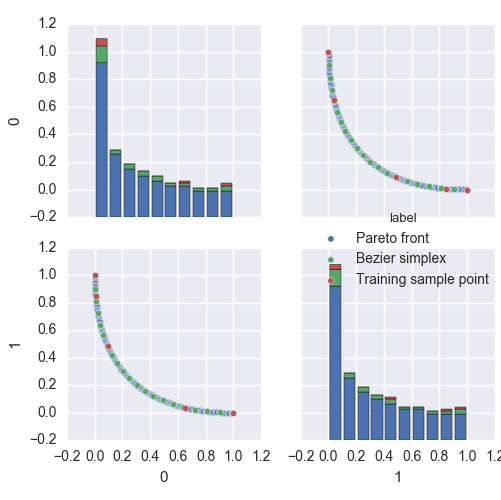}
\includegraphics[width=0.3\hsize]{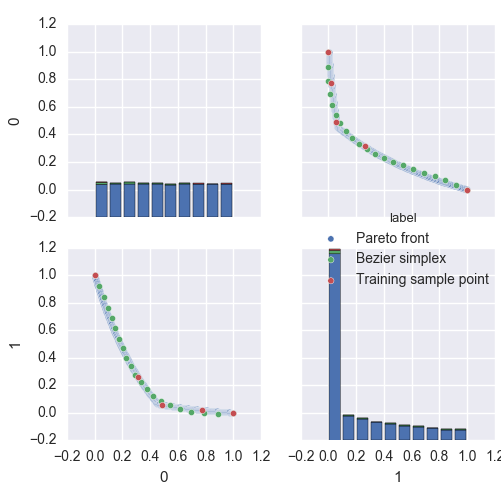}
\includegraphics[width=0.3\hsize]{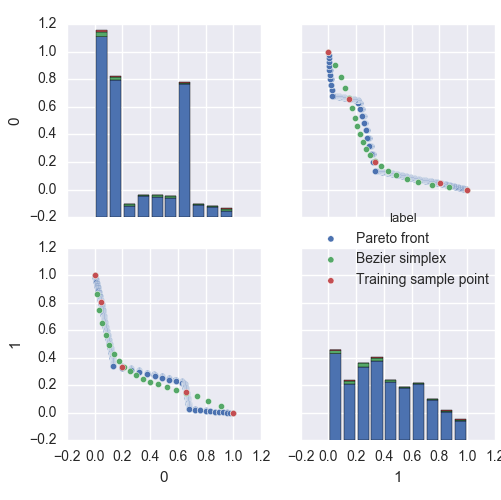}}\\
\subfloat[All-at-once]{%
\includegraphics[width=0.3\hsize]{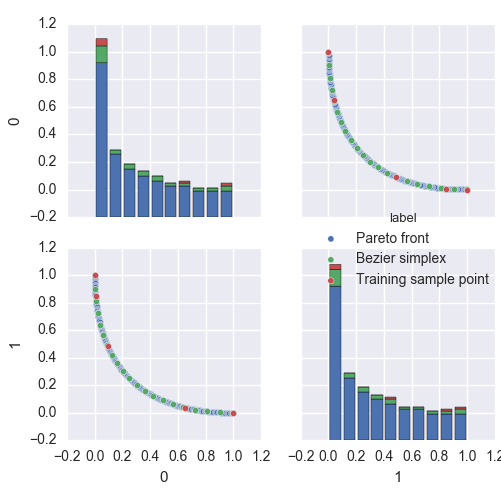}
\includegraphics[width=0.3\hsize]{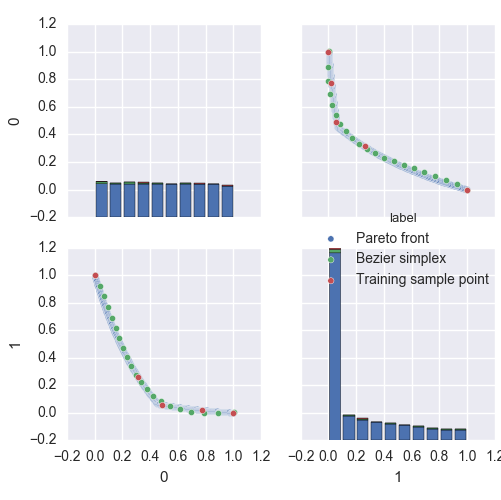}
\includegraphics[width=0.3\hsize]{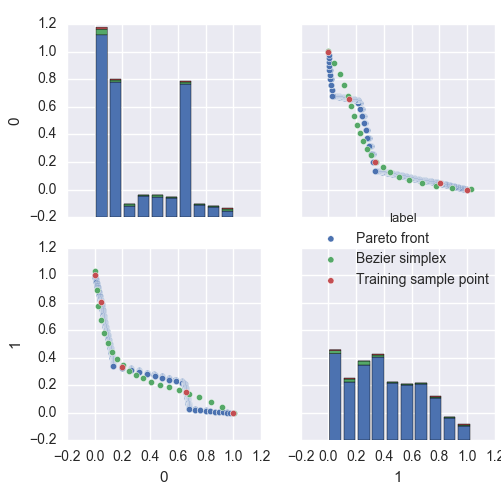}}\\
\subfloat[Response surface]{%
\includegraphics[width=0.3\hsize]{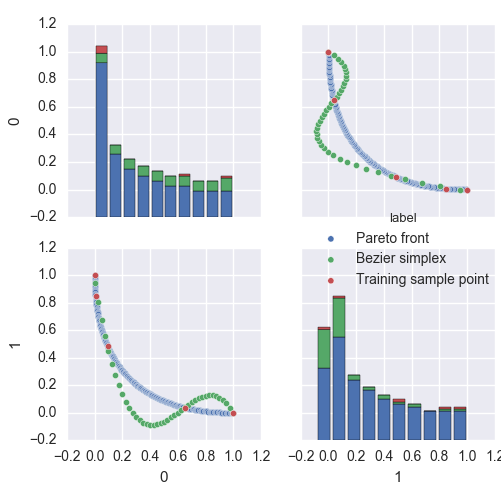}
\includegraphics[width=0.3\hsize]{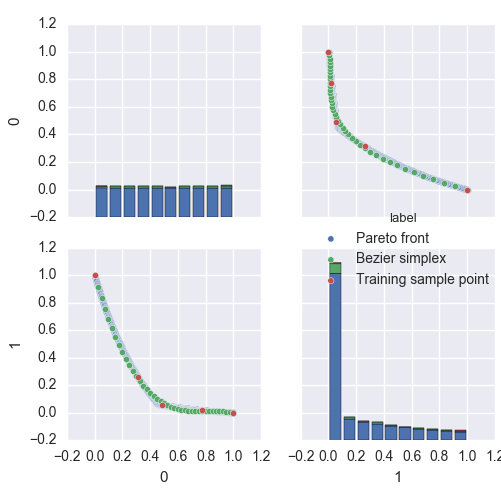}
\includegraphics[width=0.3\hsize]{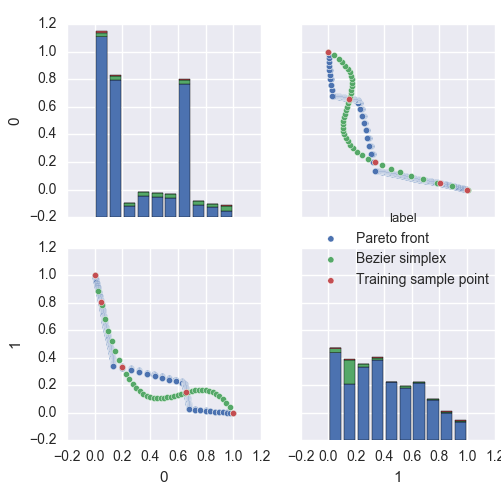}}\\
\caption{B\'ezier curves and a response surface for Schaffer (left), ConstrEx (center) and Osyczka2 (right) with sample size $(1, 3)$.}\label{fig:Schaffer}
\end{figure*}

\begin{figure*}[t]
\centering%
\subfloat[Inductive skeleton]{\includegraphics[width=0.4\hsize]{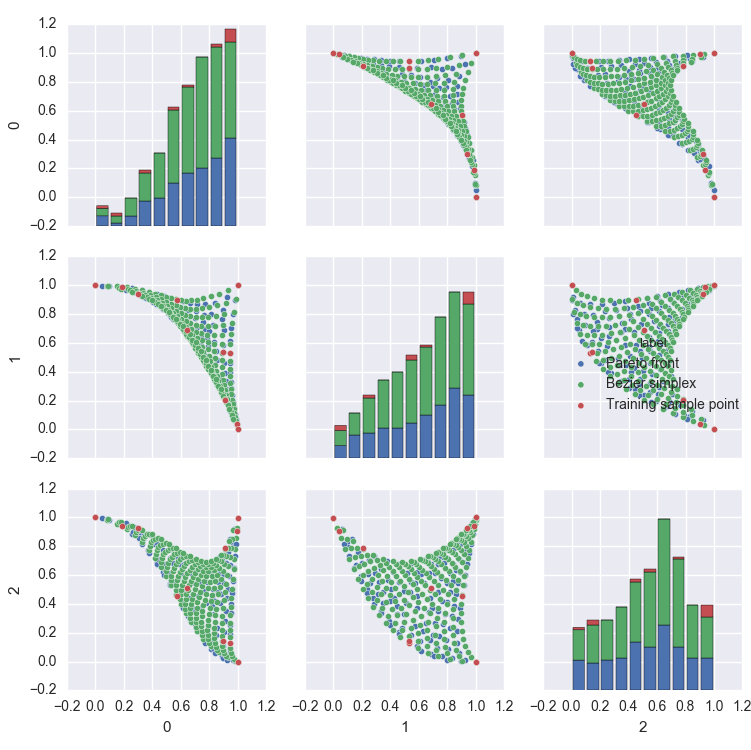}}\\
\subfloat[All-at-once]{\includegraphics[width=0.4\hsize]{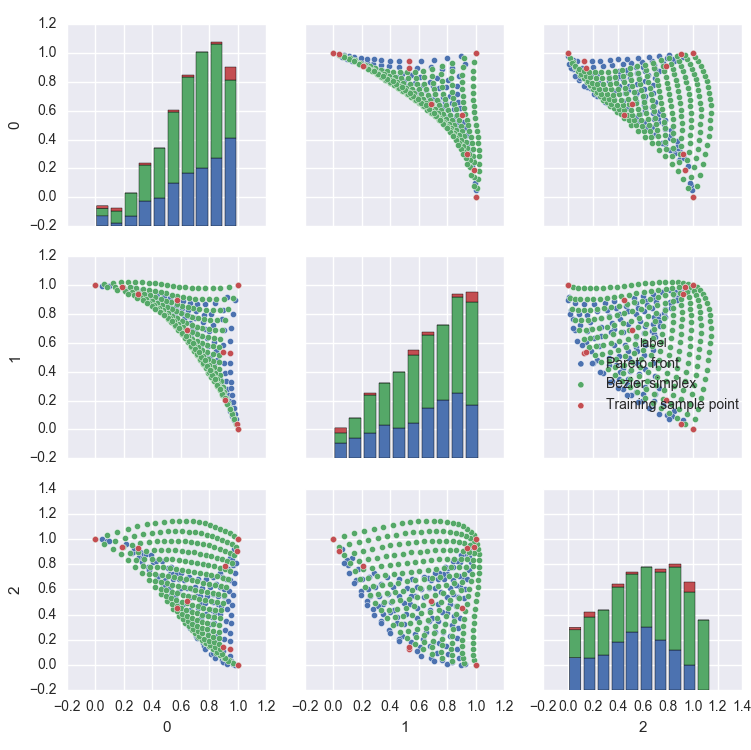}}\\
\subfloat[Response surface]{\includegraphics[width=0.4\hsize]{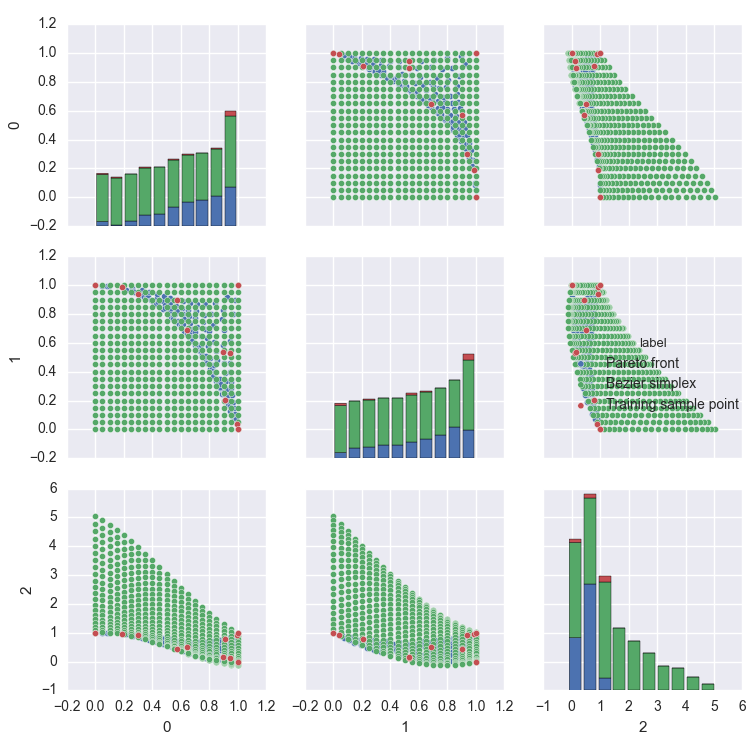}}
\caption{B\'ezier triangles and a response surface for 3-MED with sample size $(1, 2, 2)$.}\label{fig:3-MED_appendix}
\end{figure*}

\begin{figure*}[t]
\centering%
\subfloat[Inductive skeleton]{\includegraphics[width=0.4\hsize]{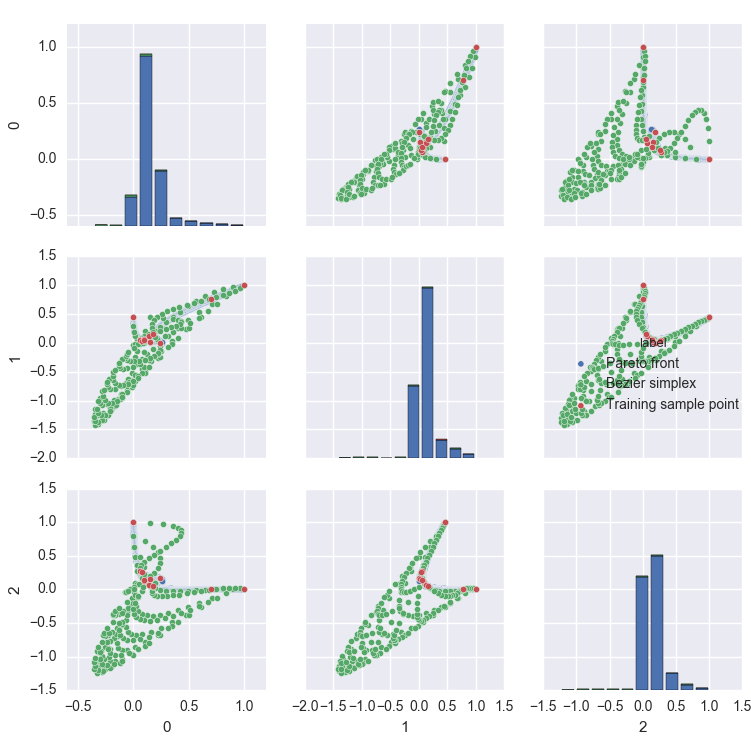}}\\
\subfloat[All-at-once]{\includegraphics[width=0.4\hsize]{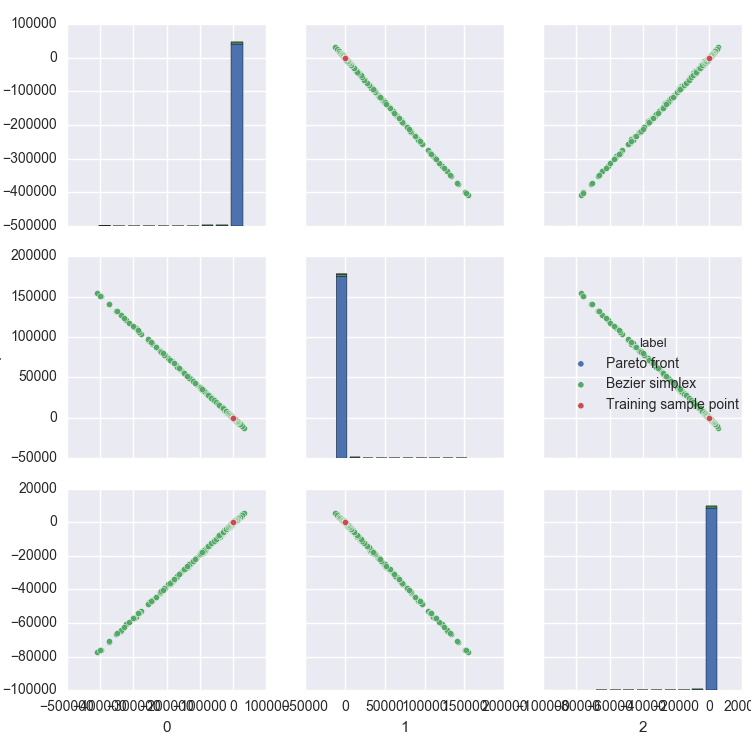}}\\
\subfloat[Response surface]{\includegraphics[width=0.4\hsize]{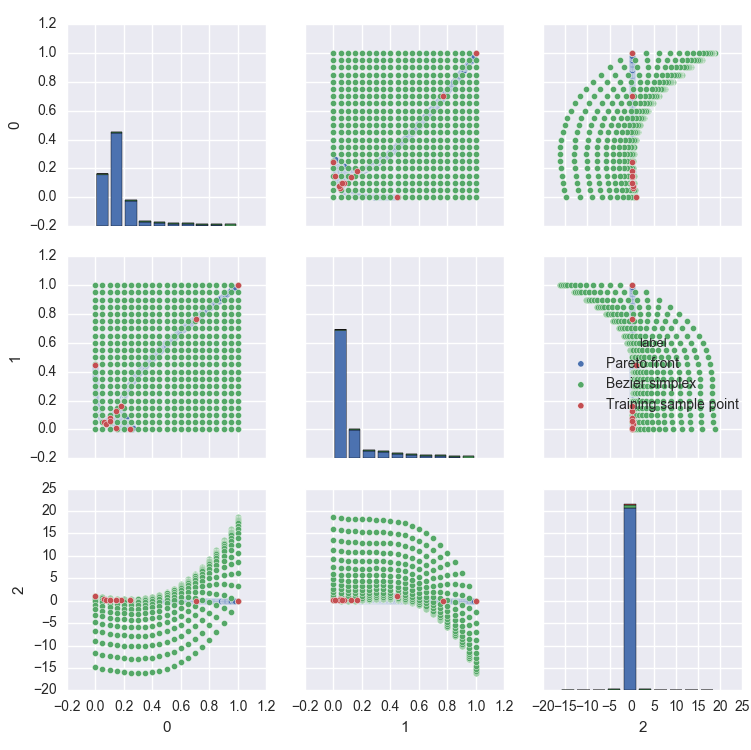}}
\caption{B\'ezier triangles and a response surface for Viennet2 with sample size $(1, 2, 2)$.}\label{fig:Viennet2}
\end{figure*}

\begin{figure*}[t]
\centering%
\subfloat[Inductive skeleton]{\includegraphics[width=0.4\hsize]{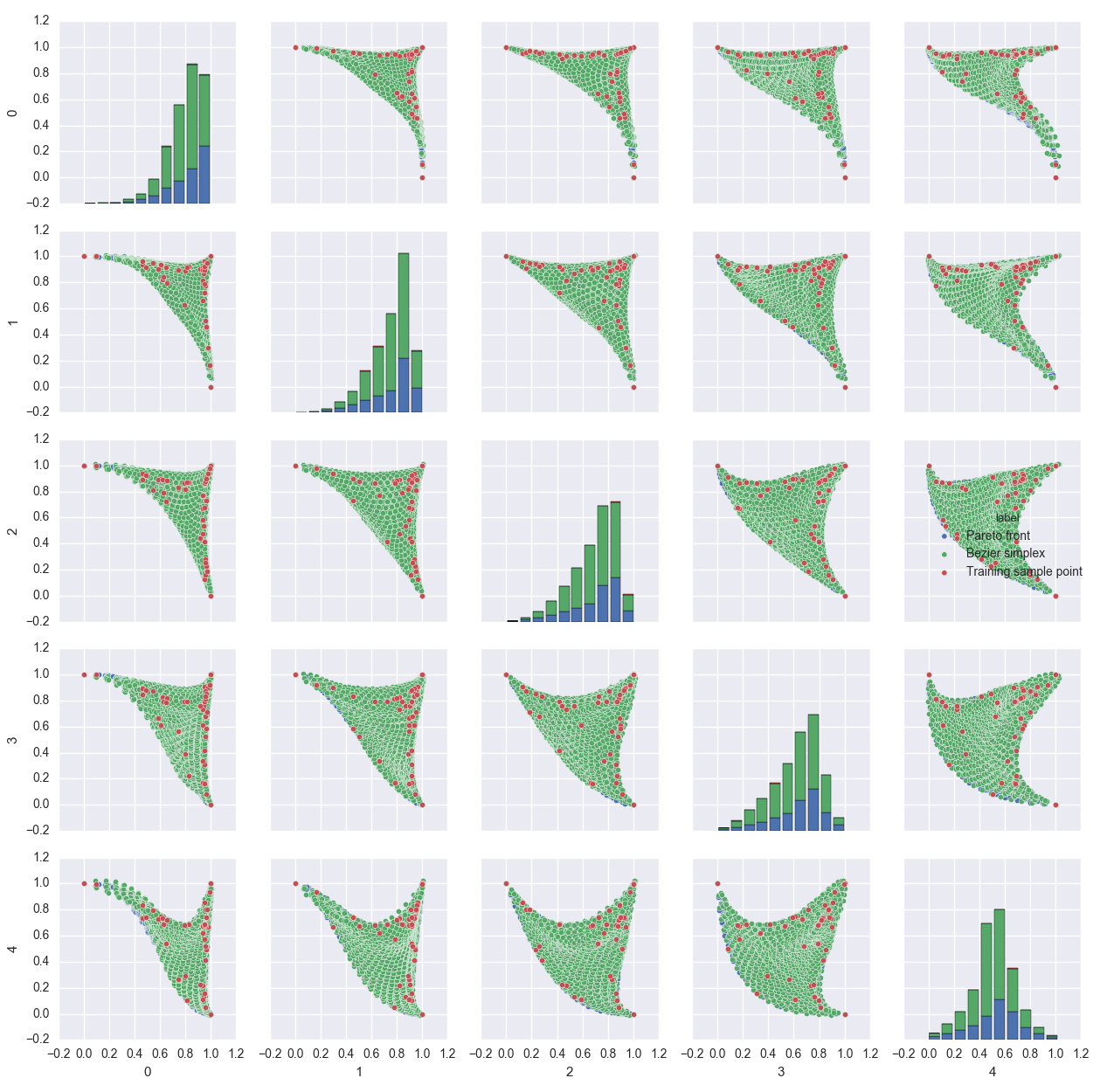}}\\
\subfloat[All-at-once]{\includegraphics[width=0.4\hsize]{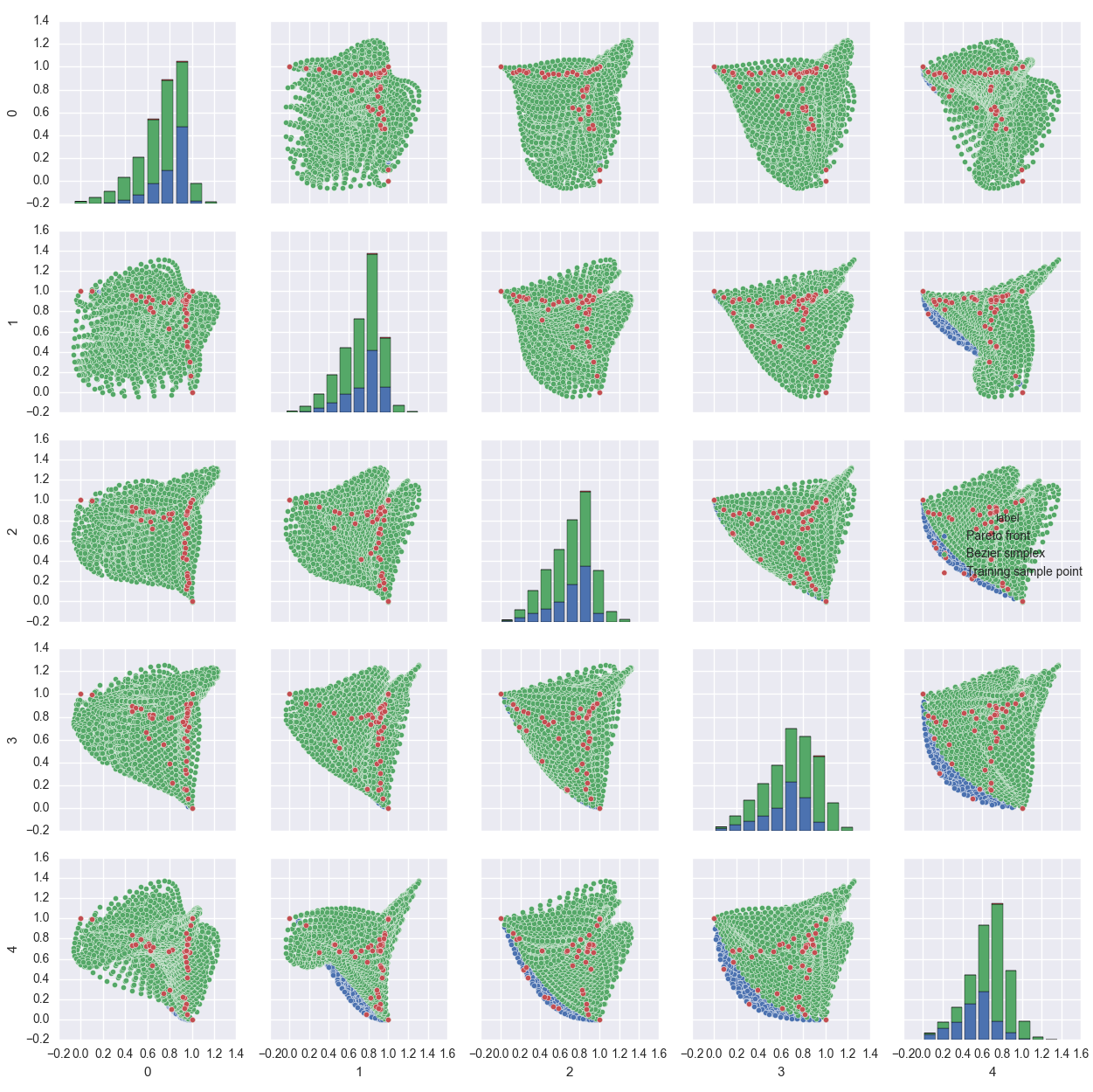}}\\
\subfloat[Response surface]{\includegraphics[width=0.4\hsize]{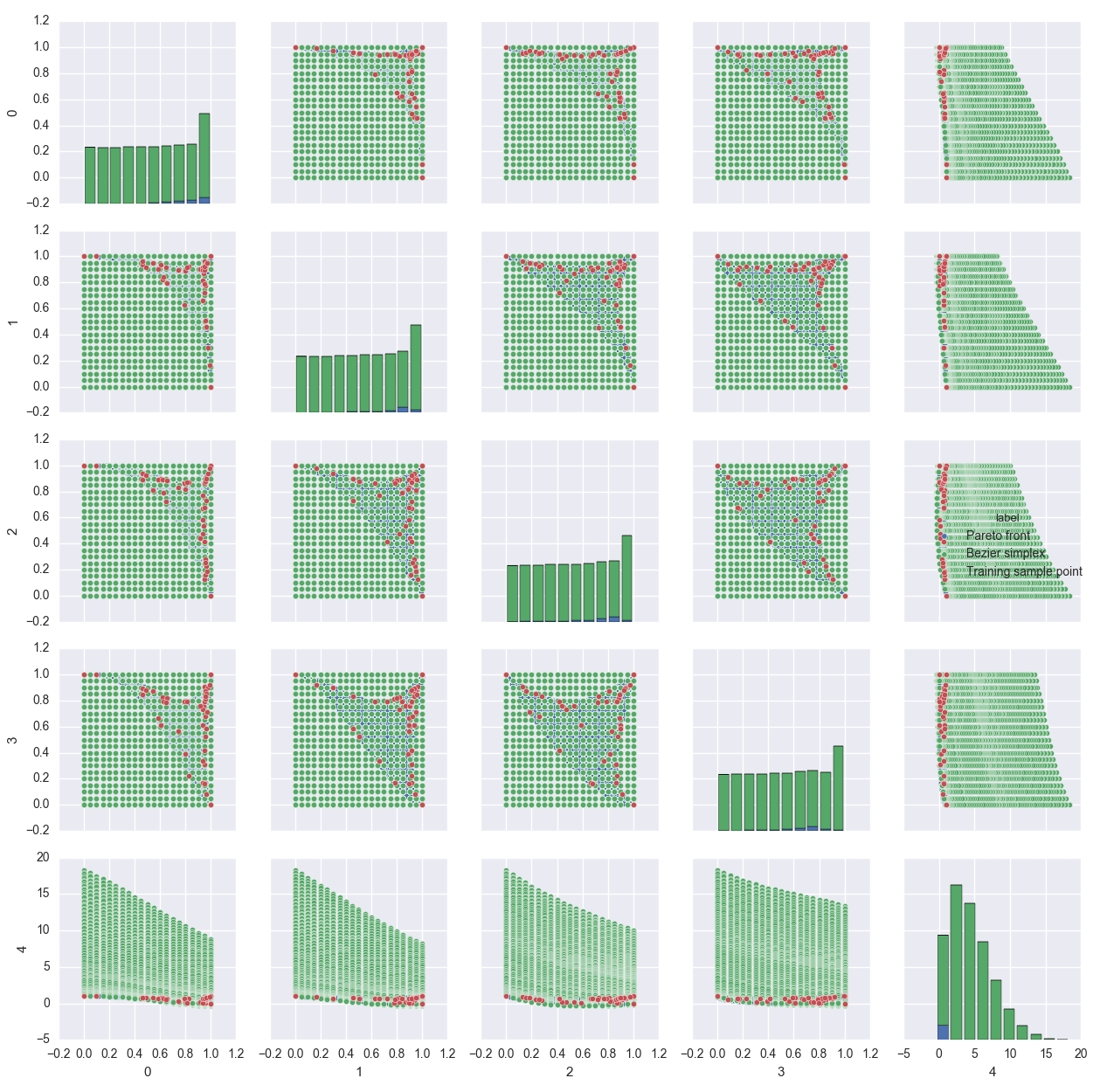}}
\caption{B\'ezier triangles and a response surface for 5-MED with sample size $(1, 2, 2)$.}\label{fig:5-MED}
\end{figure*}

\begin{figure*}[t]
\centering%
\subfloat[Inductive skeleton]{\includegraphics[width=0.4\hsize]{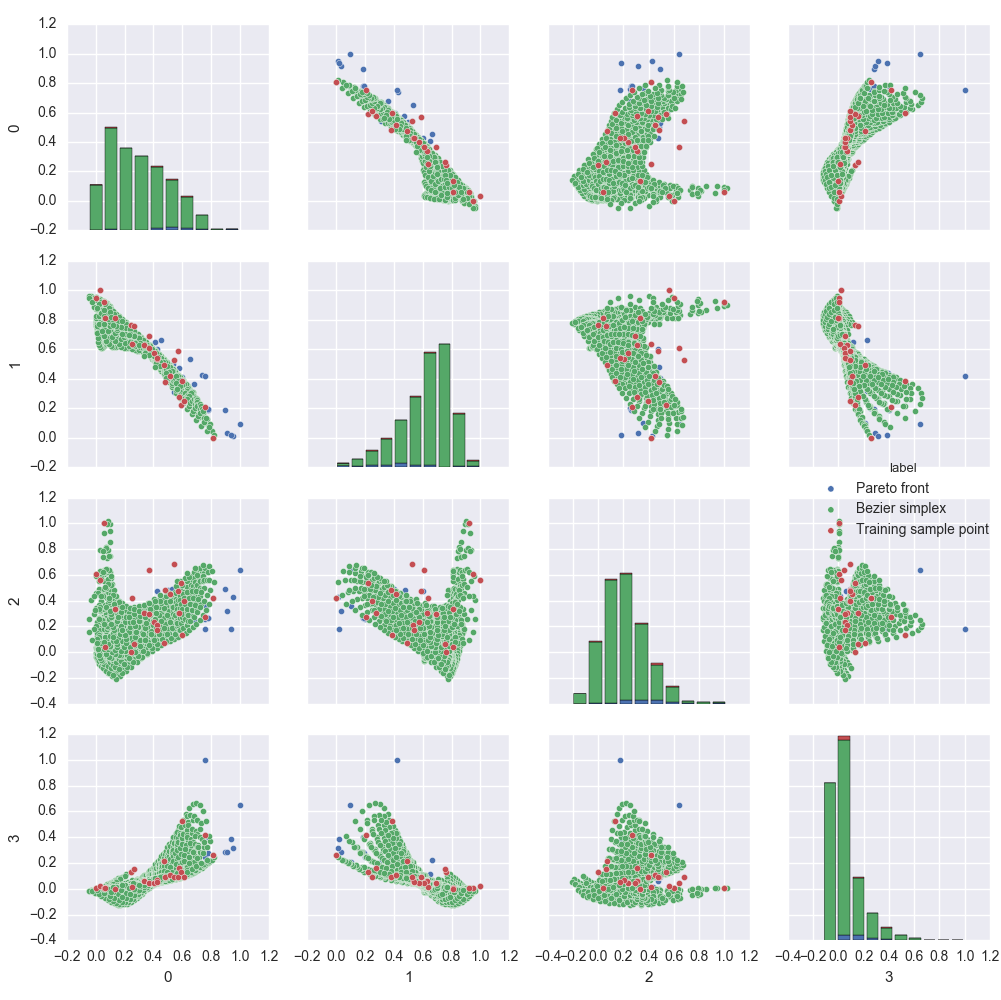}}\\
\subfloat[All-at-once]{\includegraphics[width=0.4\hsize]{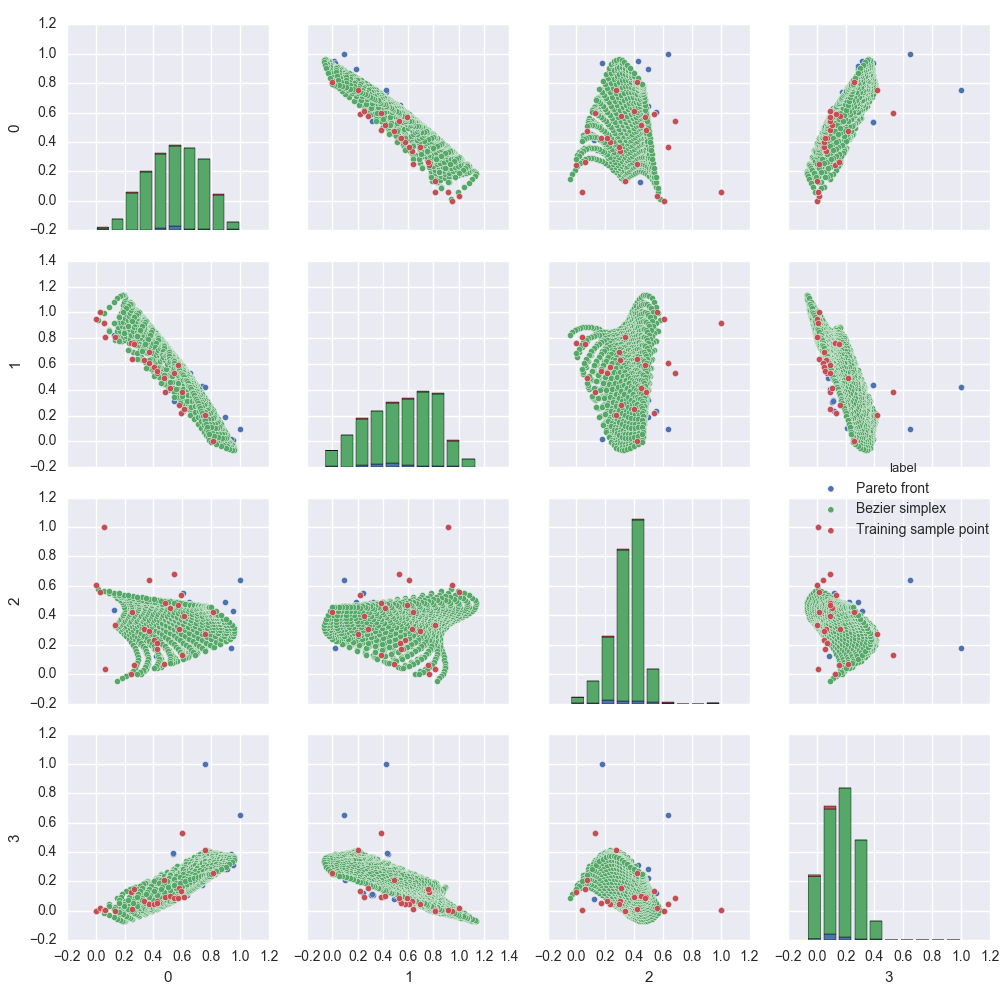}}\\
\subfloat[Response surface]{\includegraphics[width=0.4\hsize]{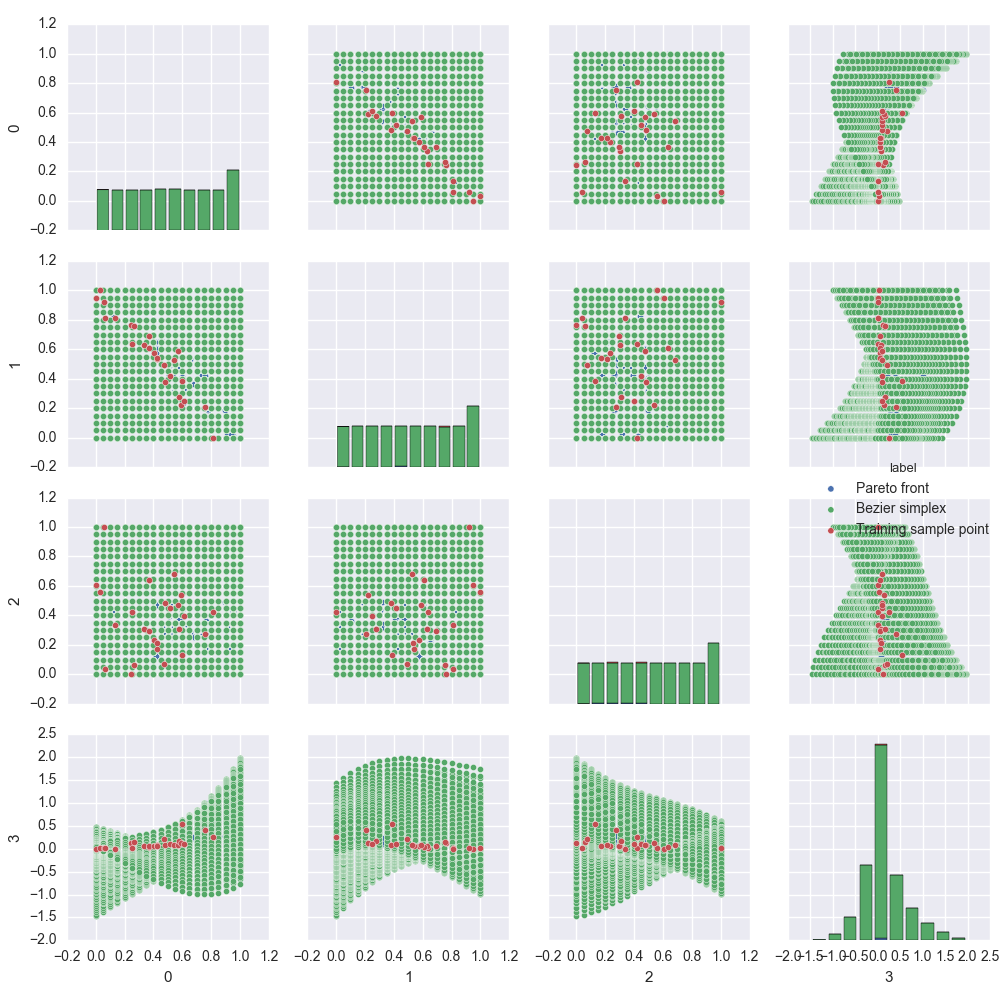}}
\caption{B\'ezier triangles and a response surface for S3TD with sample size $(1, 5, 5)$.}\label{fig:S3TD}
\end{figure*}

\begin{figure*}[t]
\centering%
\subfloat[Schaffer with sample size $(1,N_2)$]{%
\includegraphics[width=0.49\hsize]{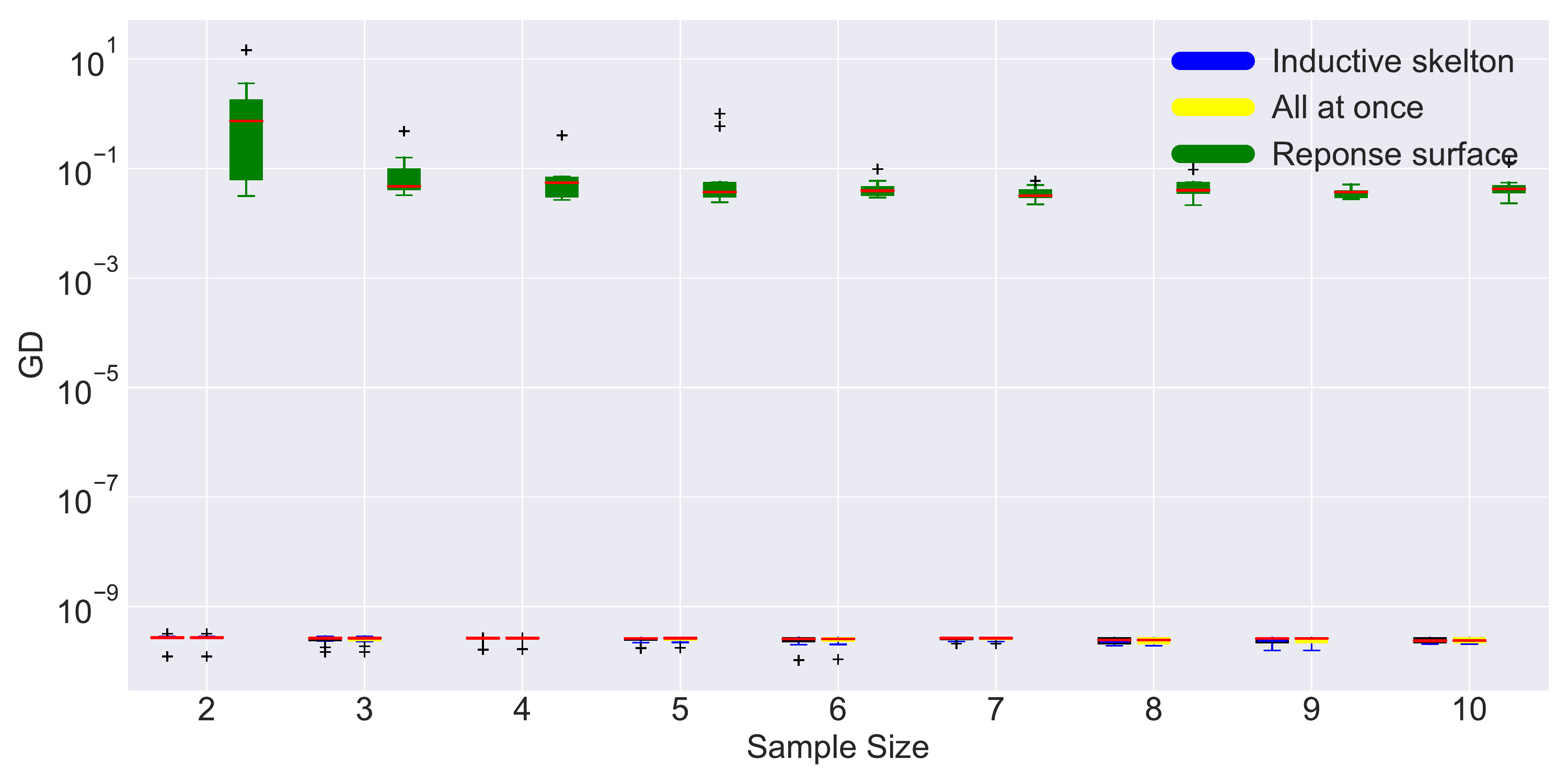}
\includegraphics[width=0.49\hsize]{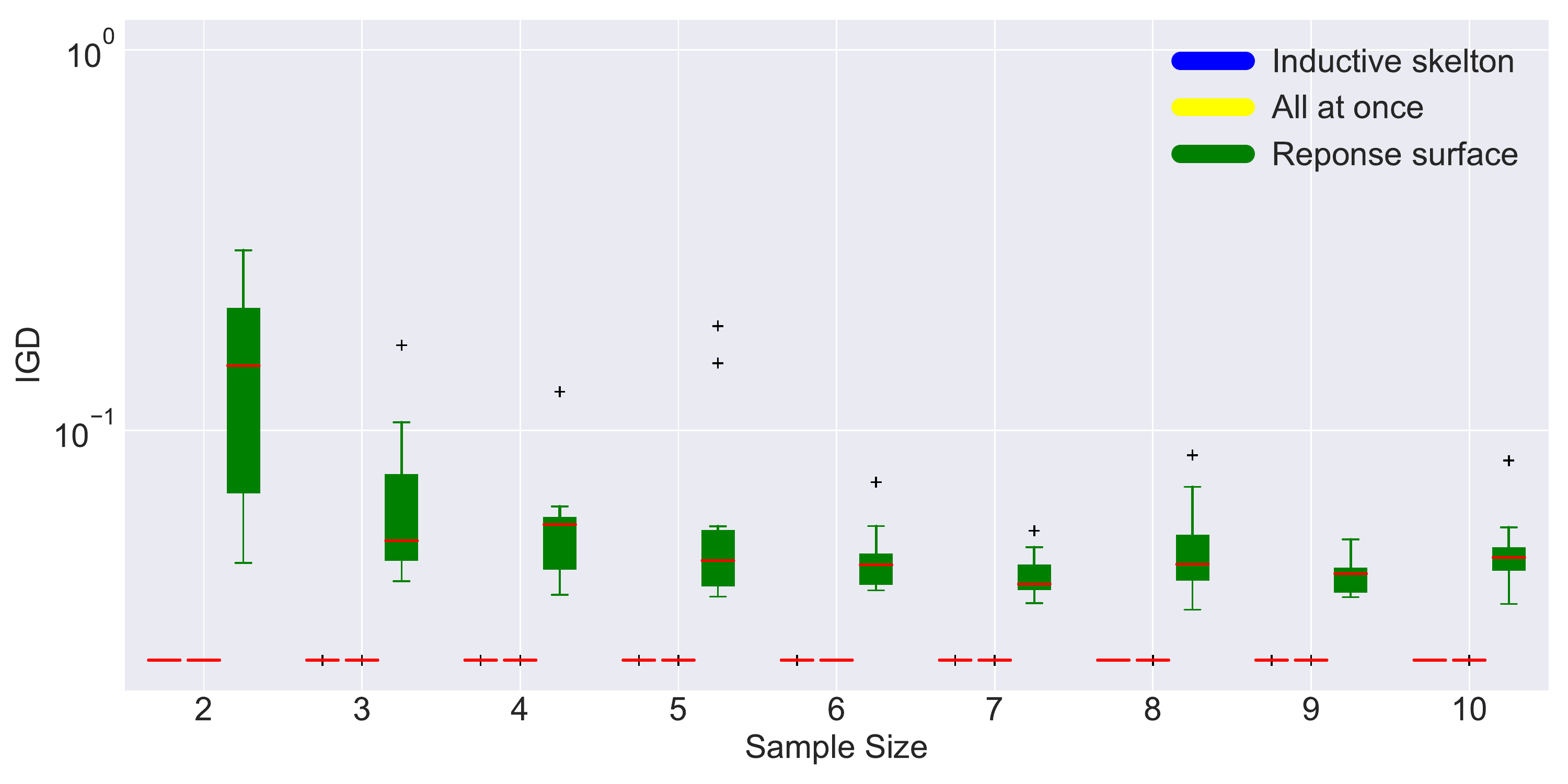}}\\
\subfloat[ConstrEx with sample size $(1,N_2)$]{%
\includegraphics[width=0.49\hsize]{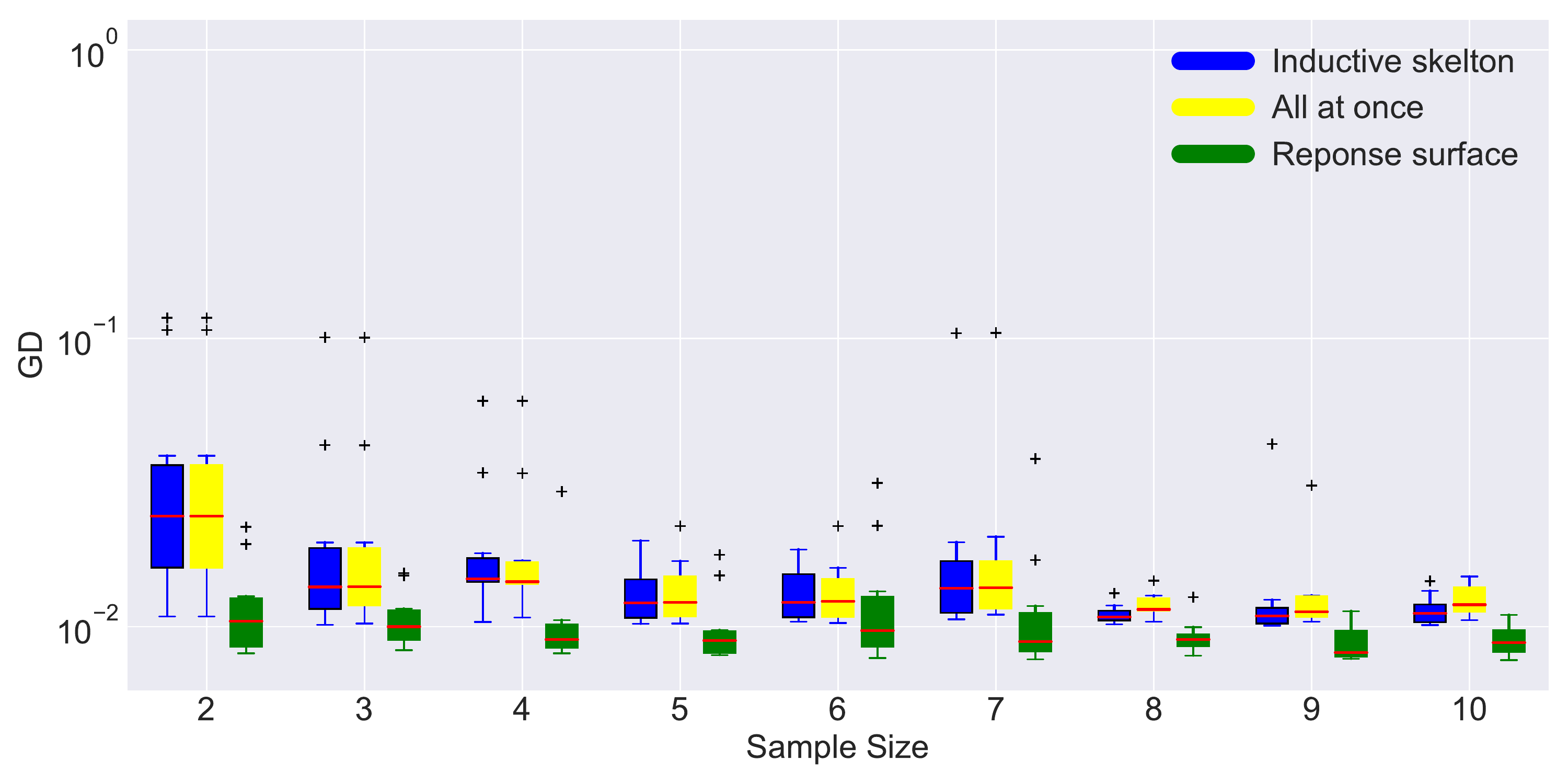}
\includegraphics[width=0.49\hsize]{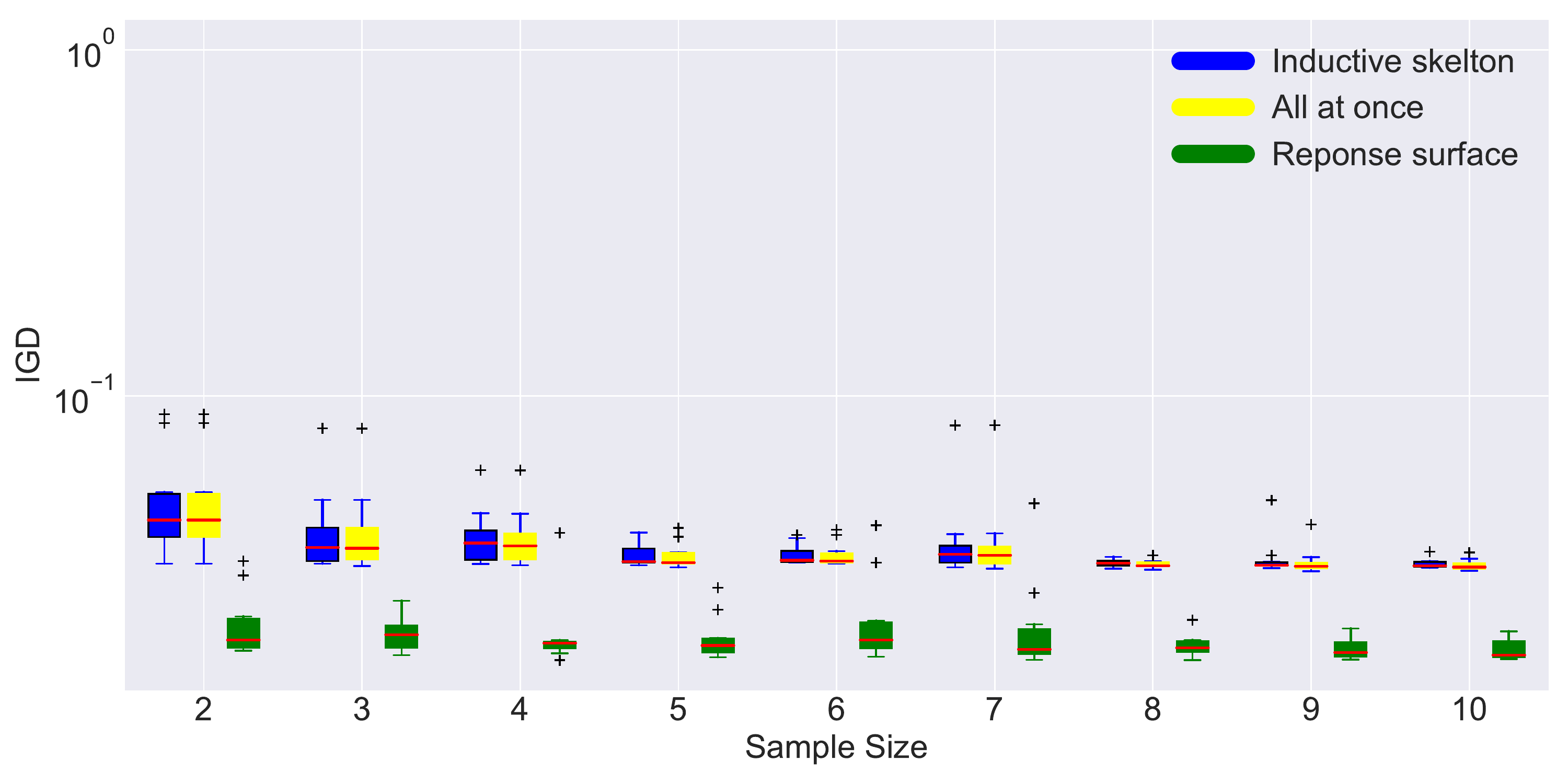}}\\
\subfloat[Osyczka2 with sample size $(1,N_2)$]{%
\includegraphics[width=0.49\hsize]{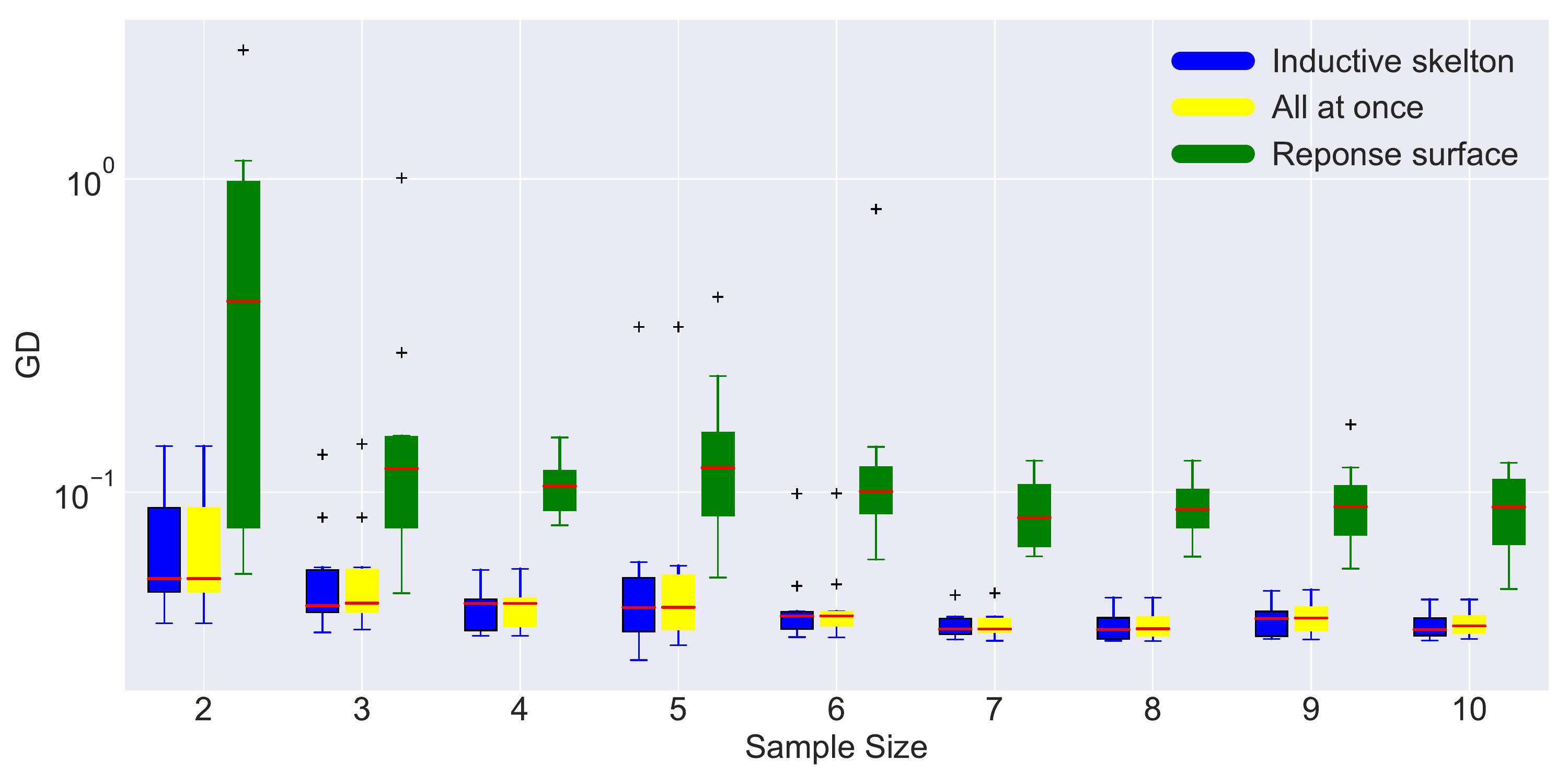}
\includegraphics[width=0.49\hsize]{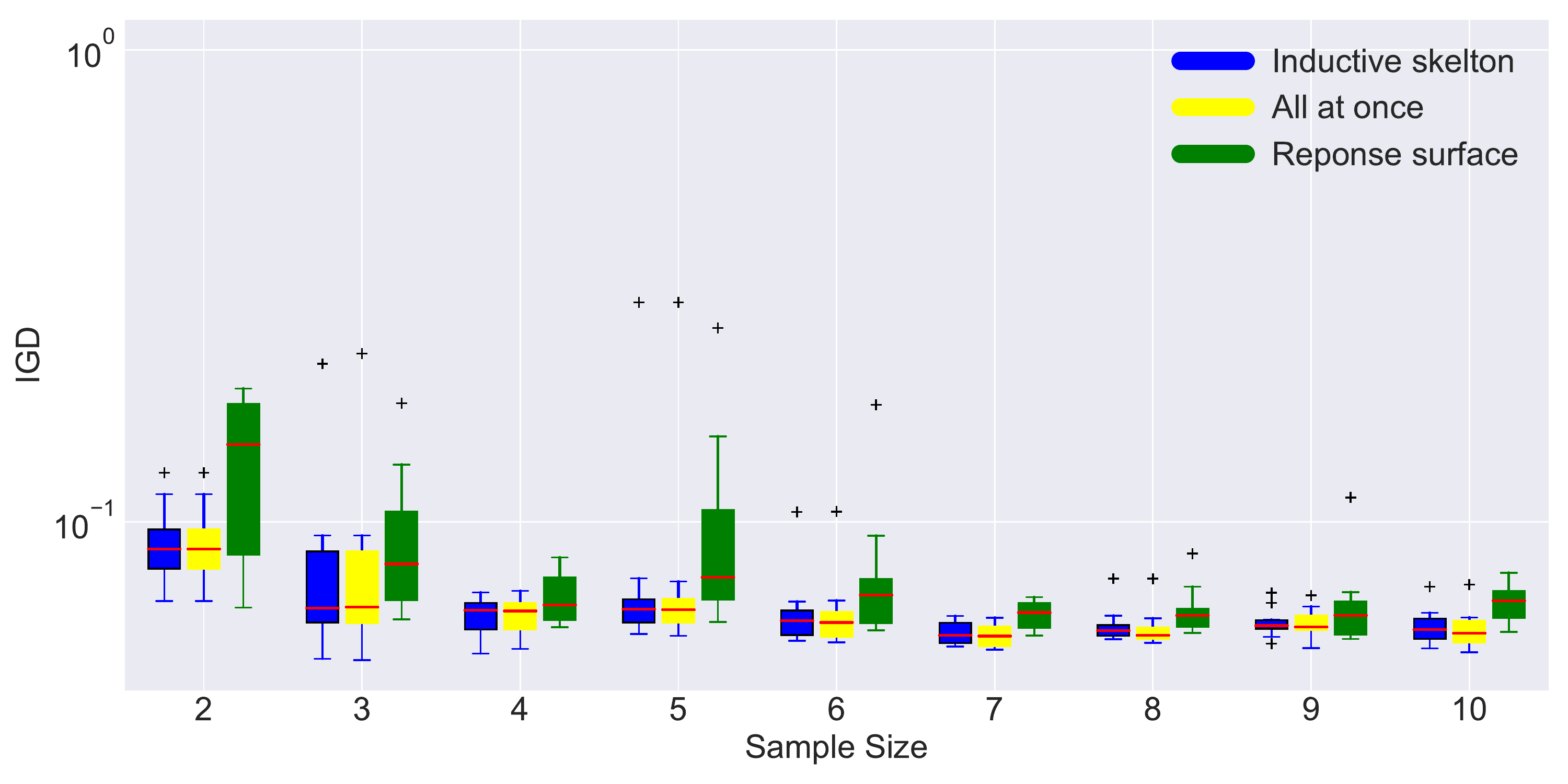}}
\caption{Sample size $N_2$ vs.\ GD/IGD (boxplot over ten trials).}\label{fig:sample-size-2D}
\end{figure*}

\begin{figure*}[t]
\centering%
\subfloat[Sample size $(1,2,N_3)$]{%
\includegraphics[width=0.49\hsize]{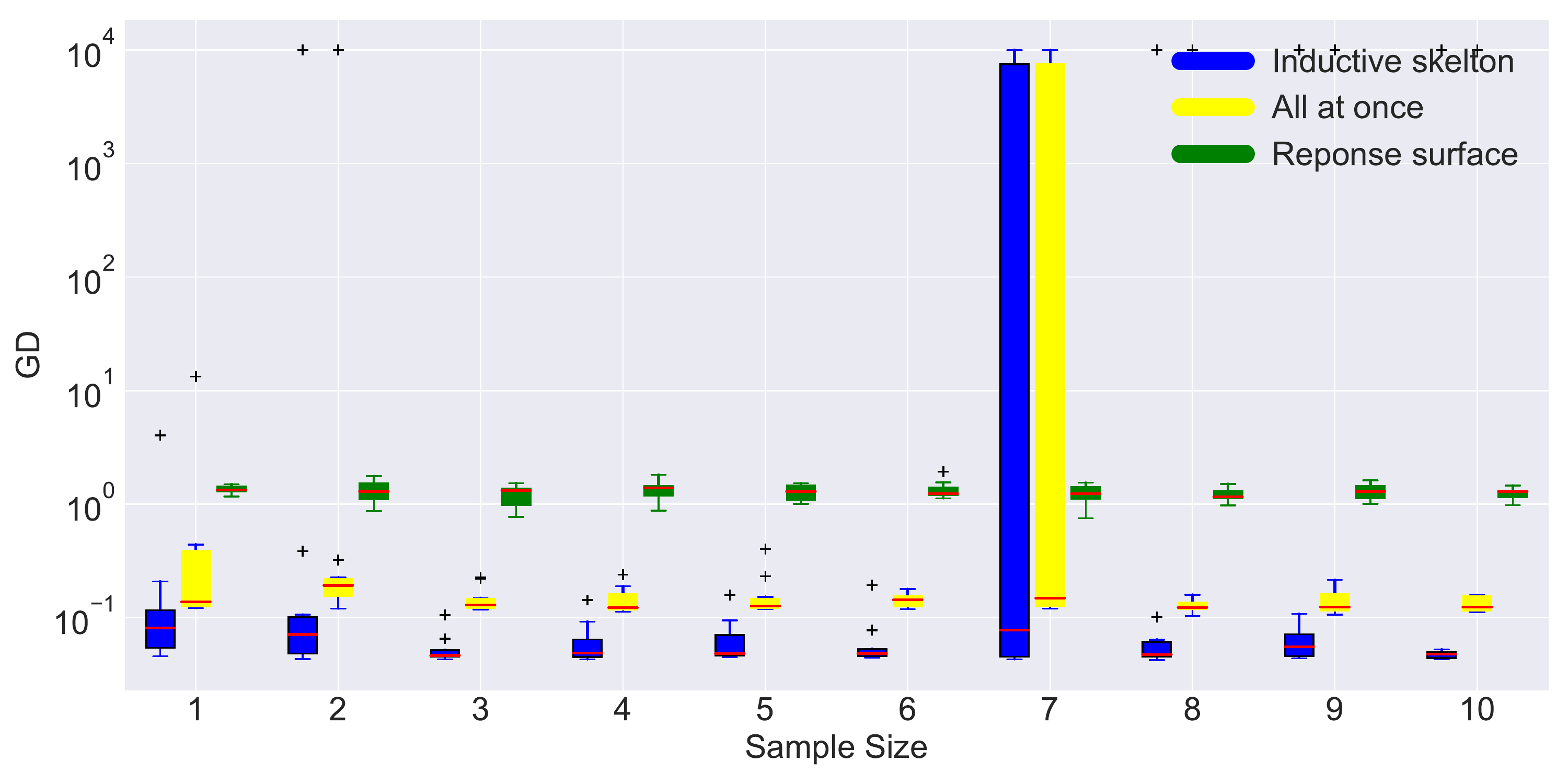}
\includegraphics[width=0.49\hsize]{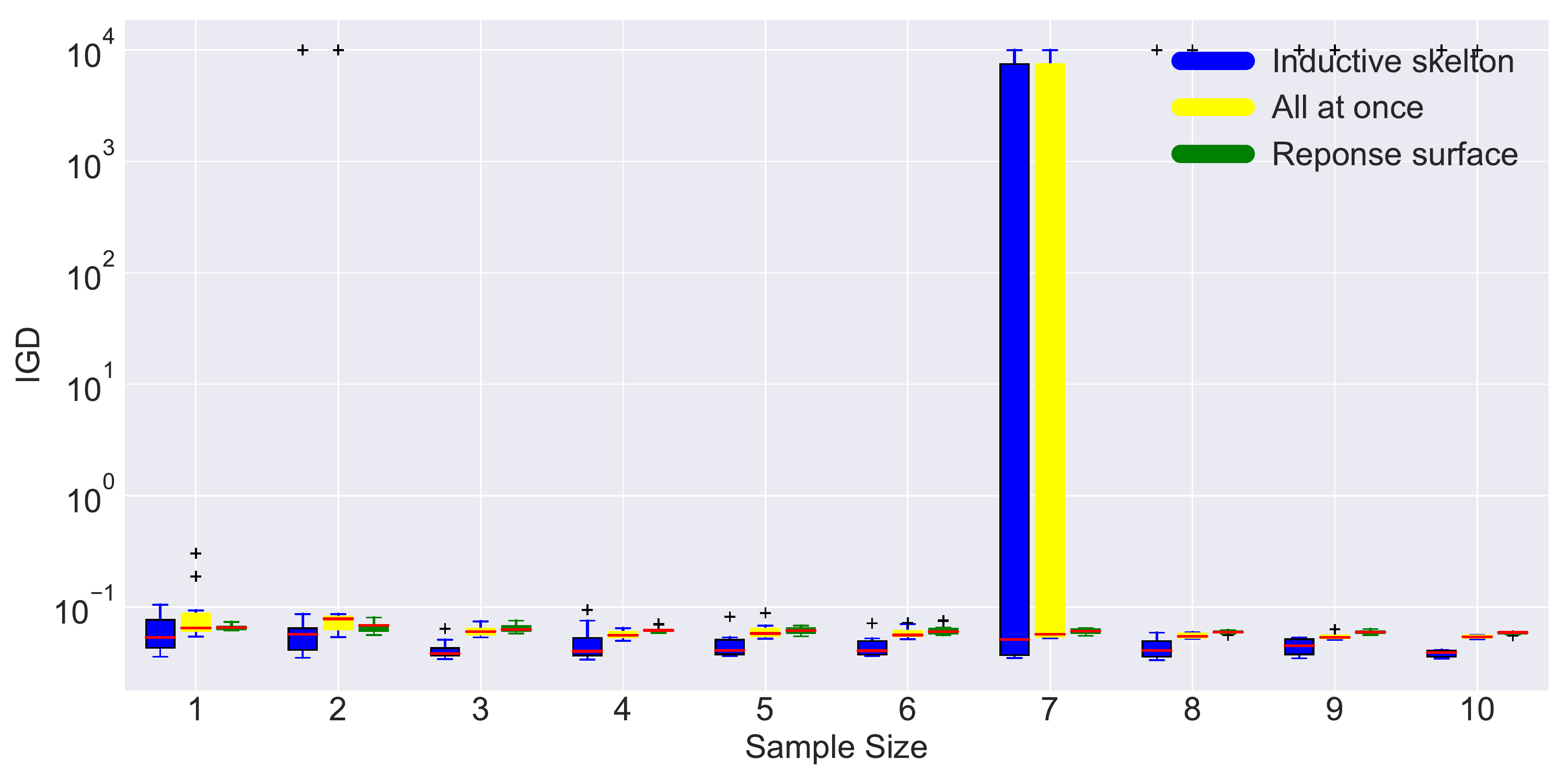}}\\
\subfloat[Sample size $(1,3,N_3)$]{%
\includegraphics[width=0.49\hsize]{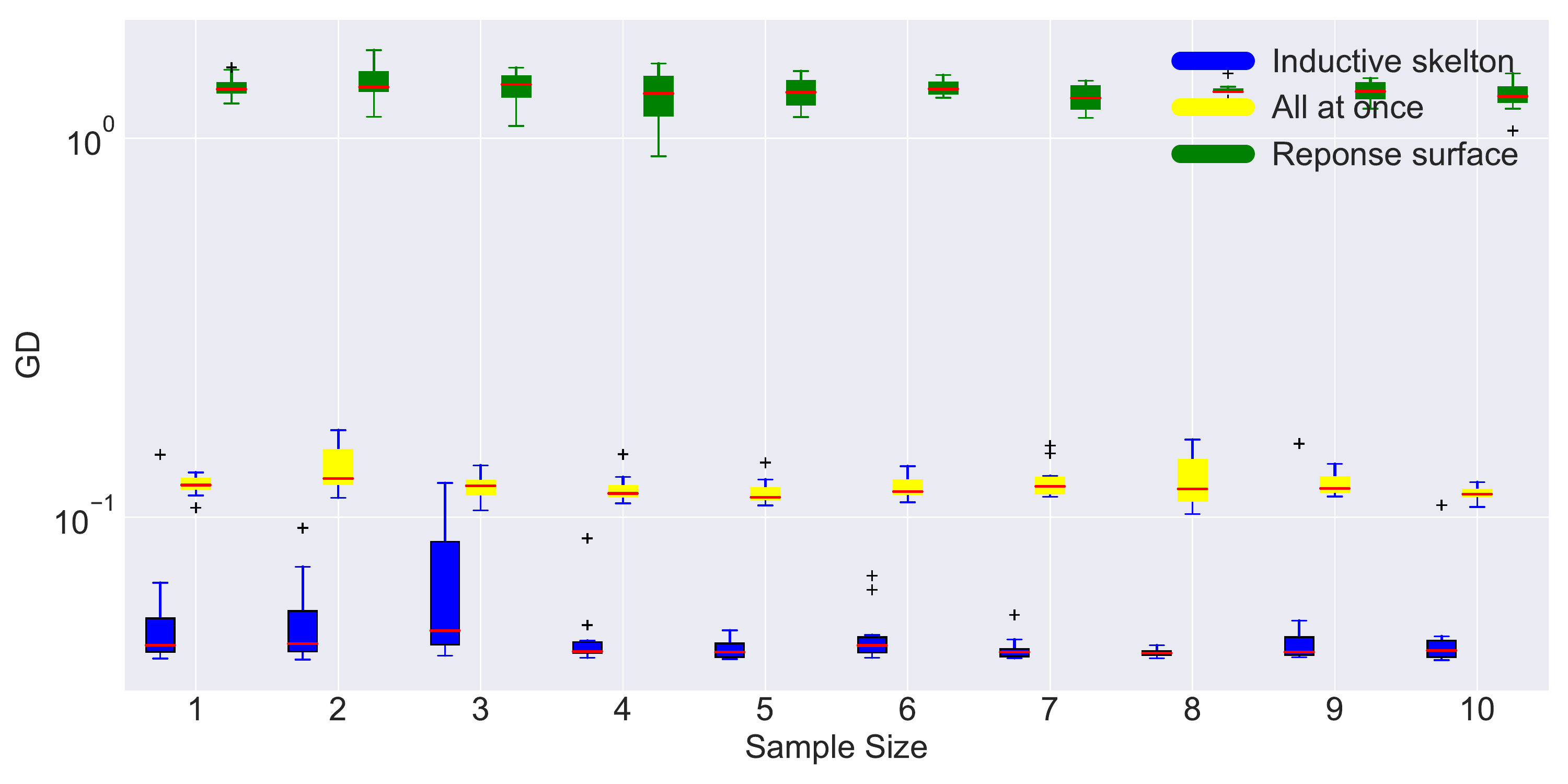}
\includegraphics[width=0.49\hsize]{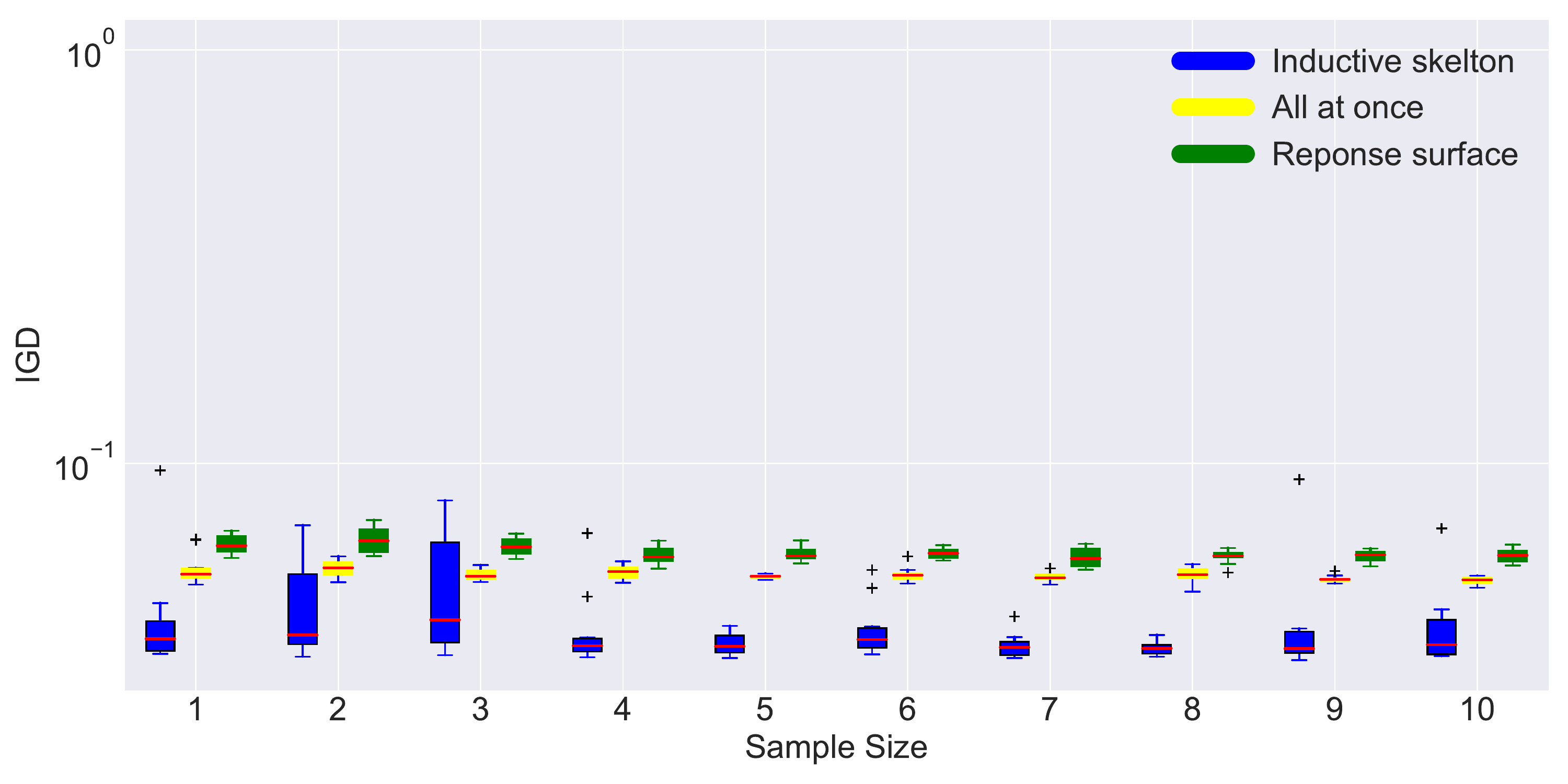}}\\
\subfloat[Sample size $(1,4,N_3)$]{%
\includegraphics[width=0.49\hsize]{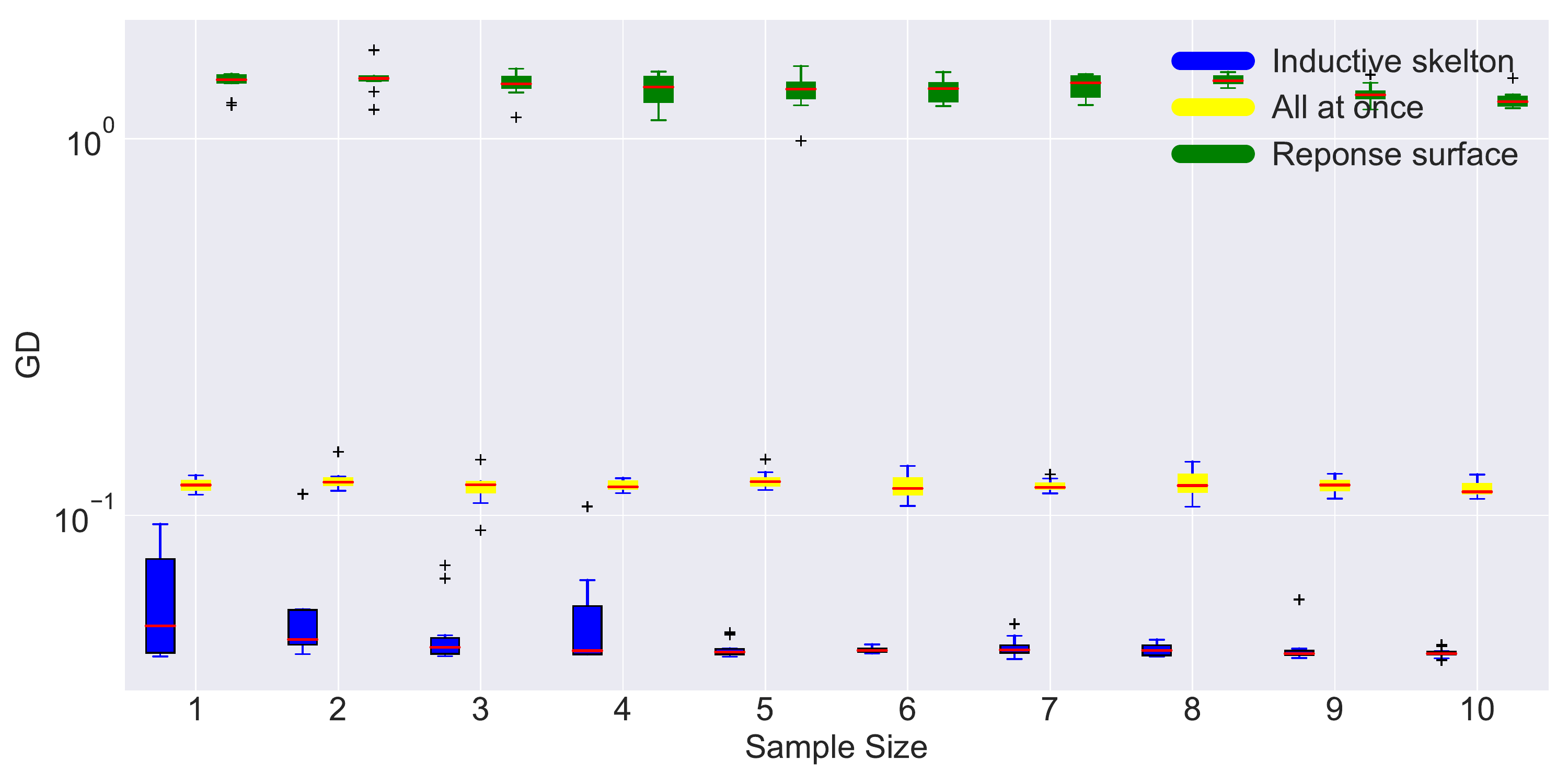}
\includegraphics[width=0.49\hsize]{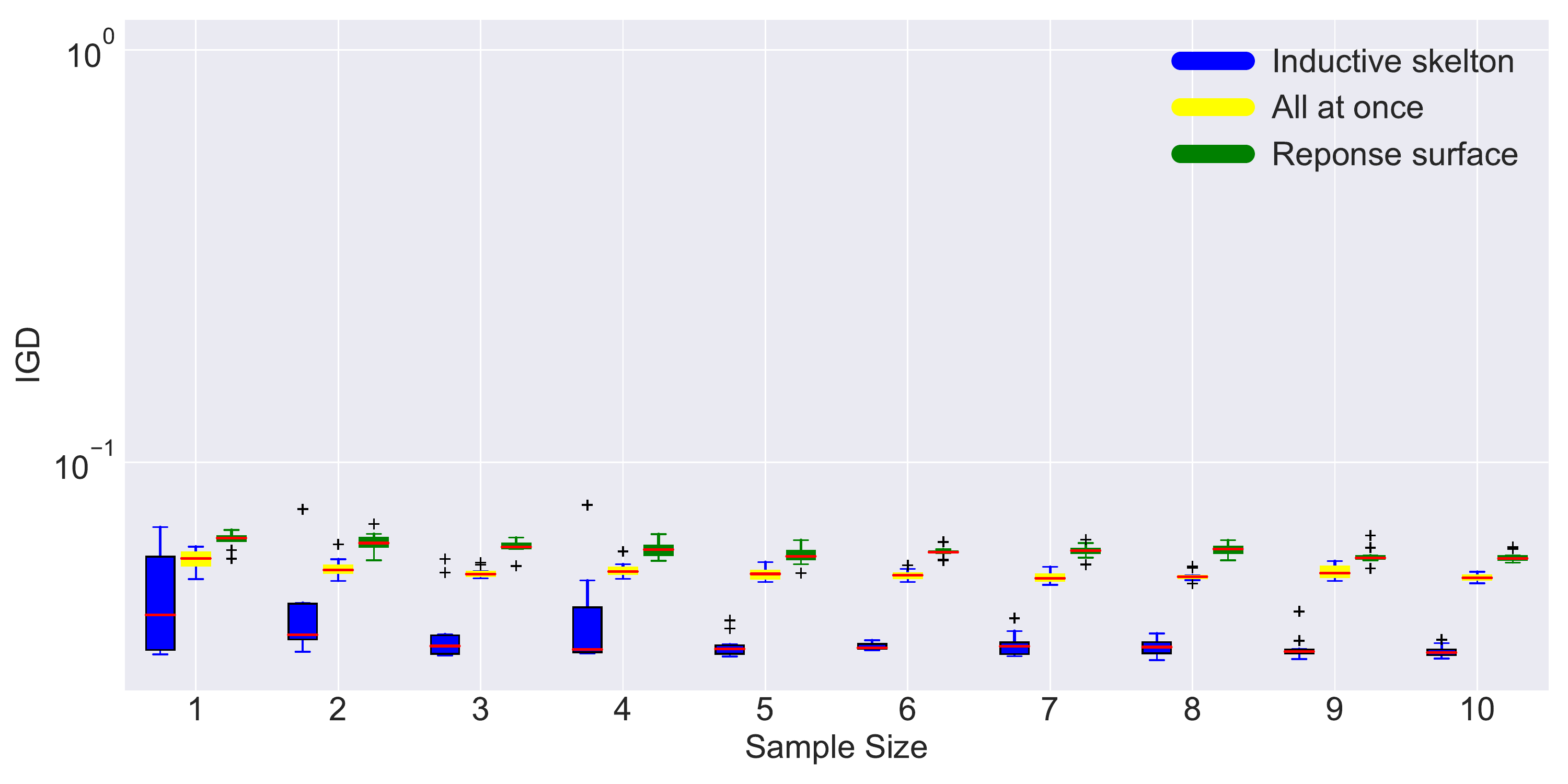}}
\caption{Sample size $N_3$ vs.\ GD/IGD on 3-MED (boxplot over ten trials).}\label{fig:sample-size-MED}
\end{figure*}

\begin{figure*}[t]
\centering%
\subfloat[Sample size $(1,2,N_3)$]{%
\includegraphics[width=0.49\hsize]{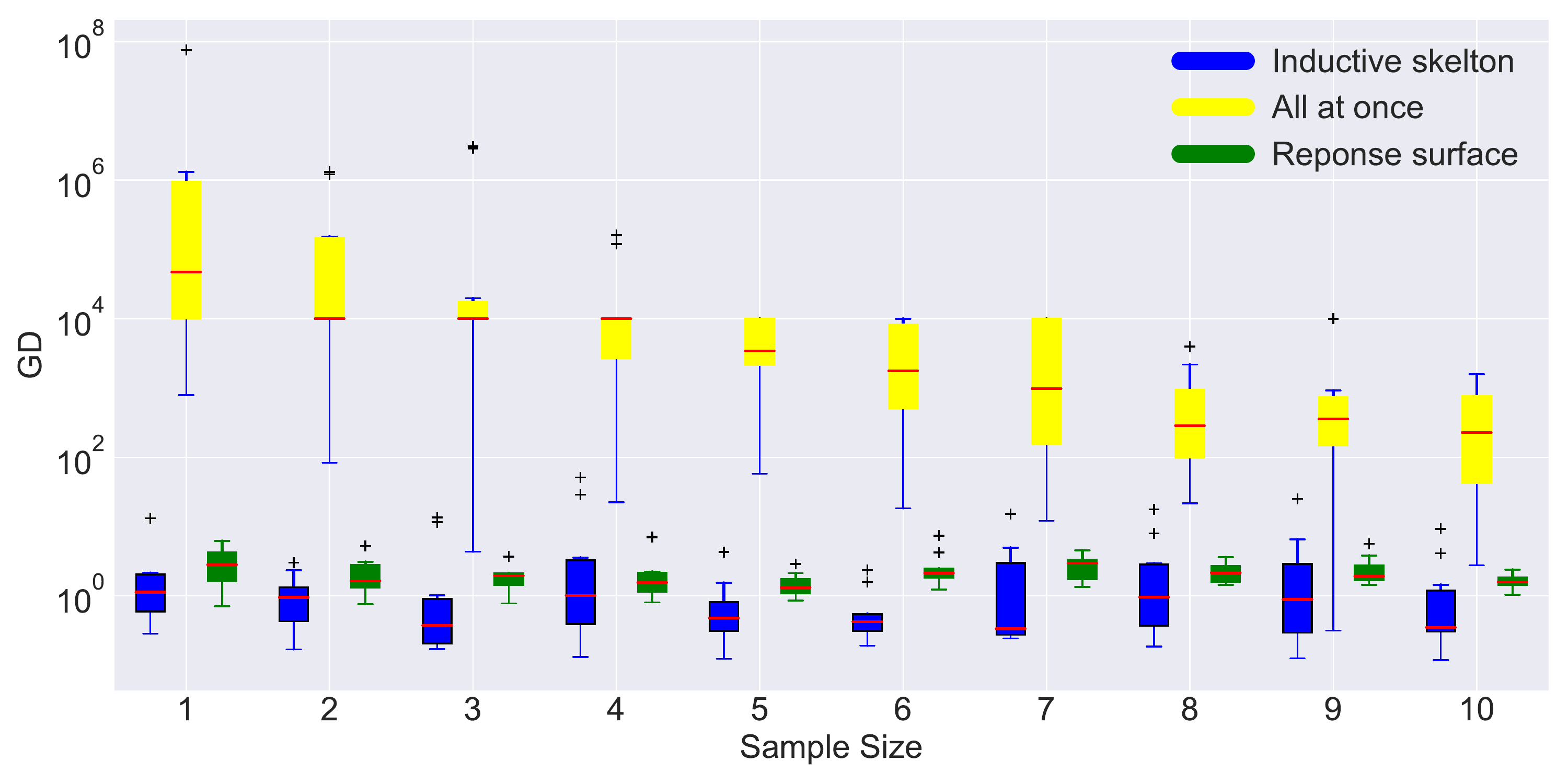}
\includegraphics[width=0.49\hsize]{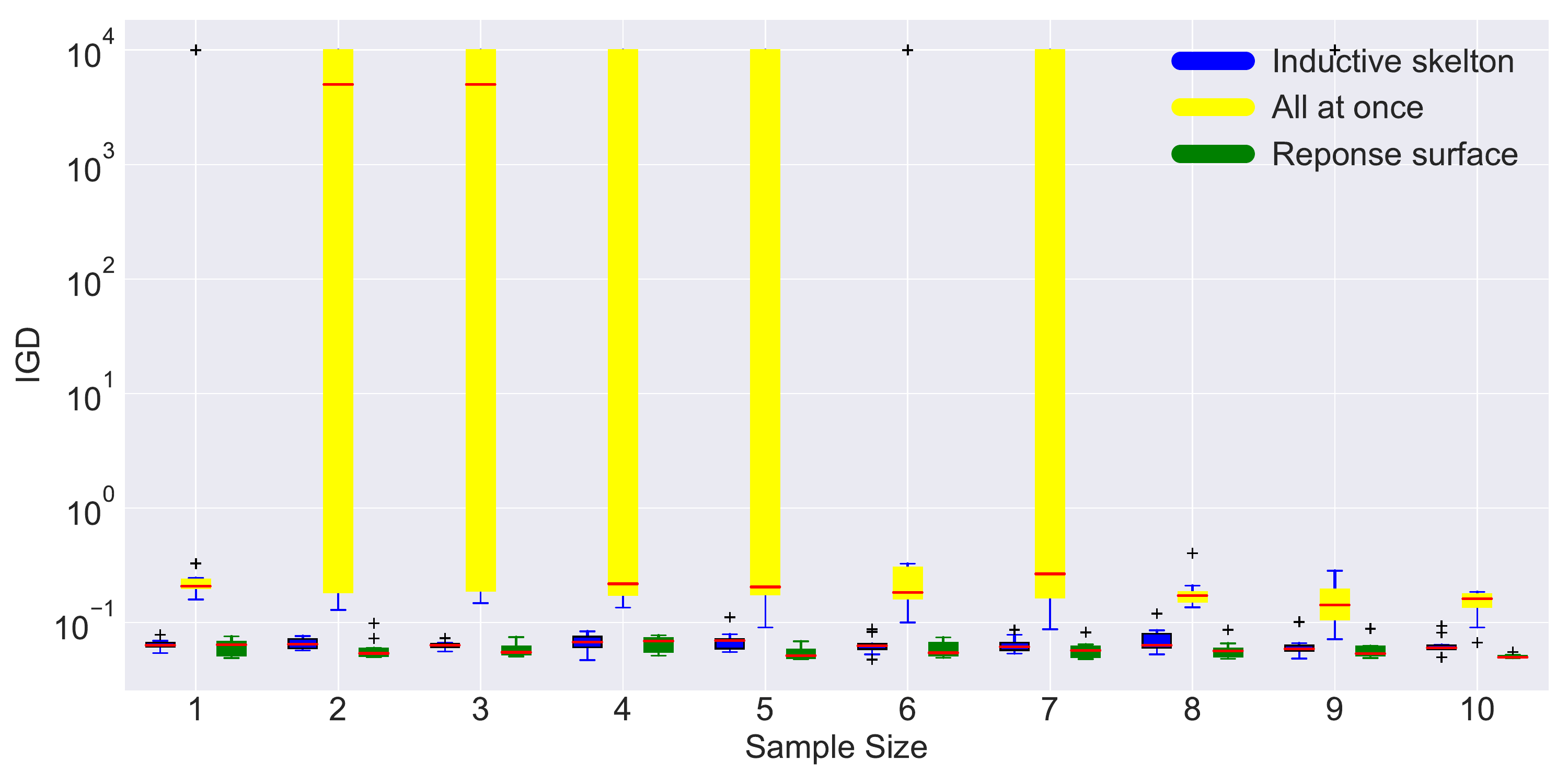}}\\
\subfloat[Sample size $(1,3,N_3)$]{%
\includegraphics[width=0.49\hsize]{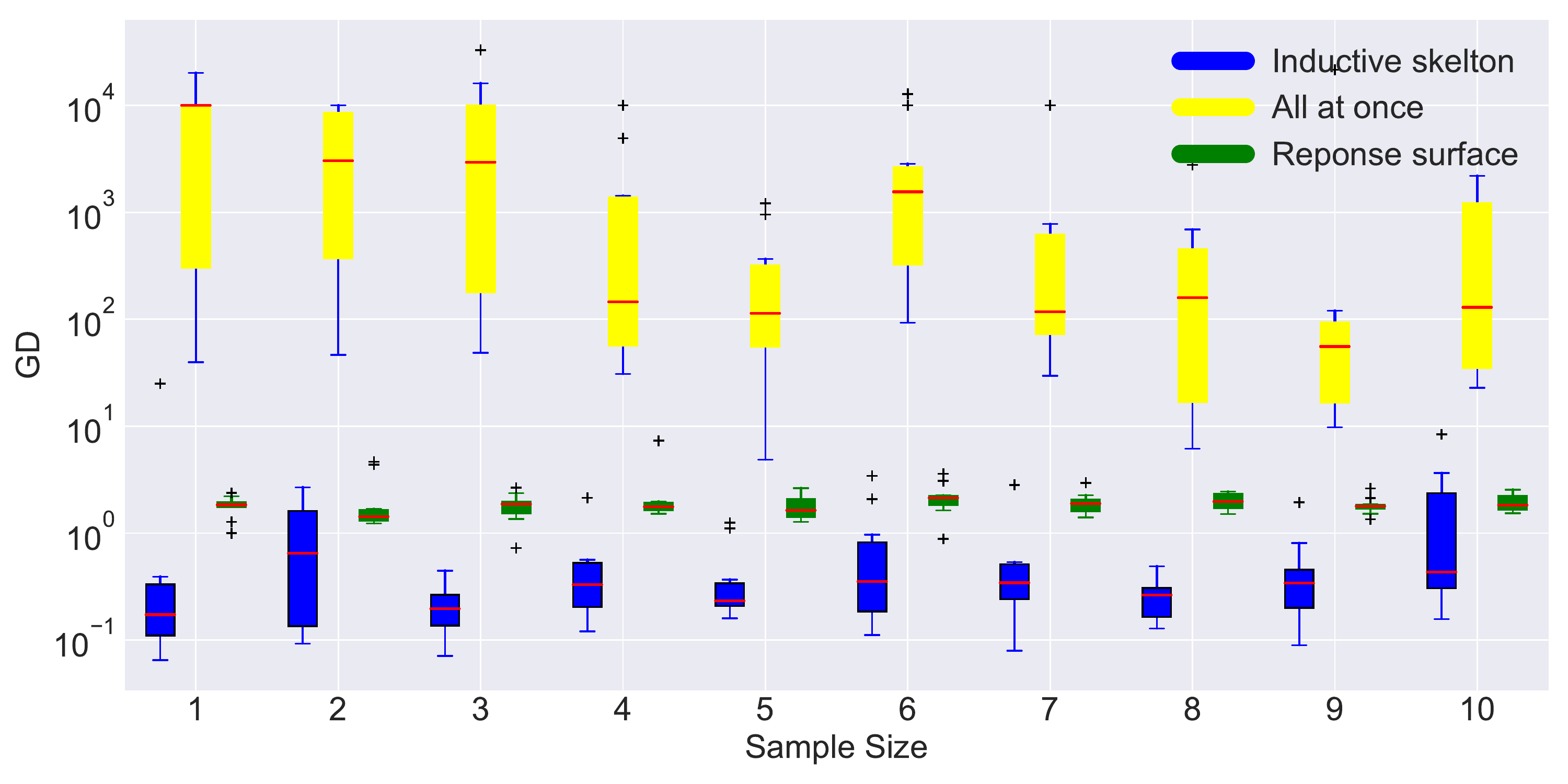}
\includegraphics[width=0.49\hsize]{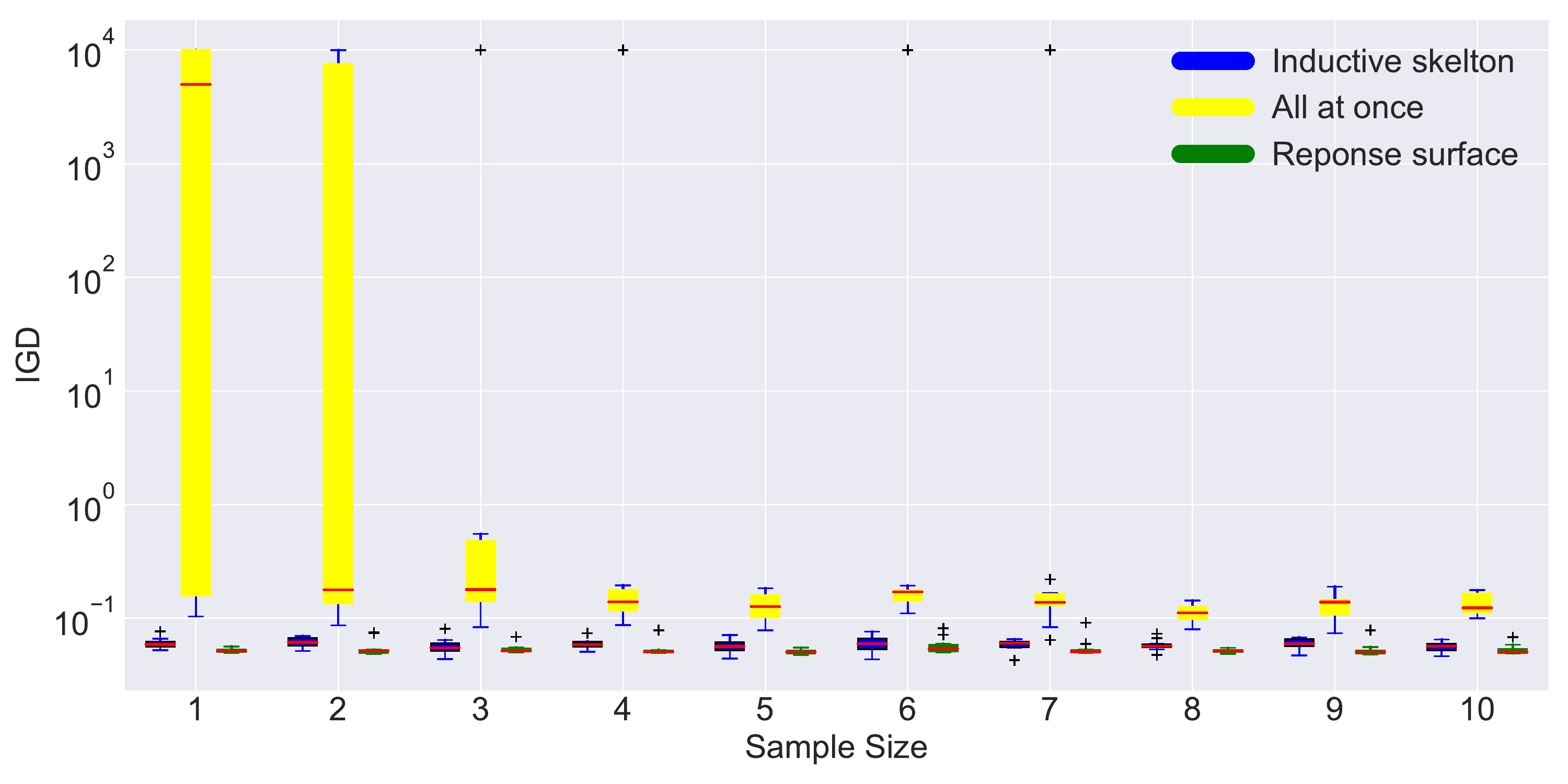}}\\
\subfloat[Sample size $(1,4,N_3)$]{%
\includegraphics[width=0.49\hsize]{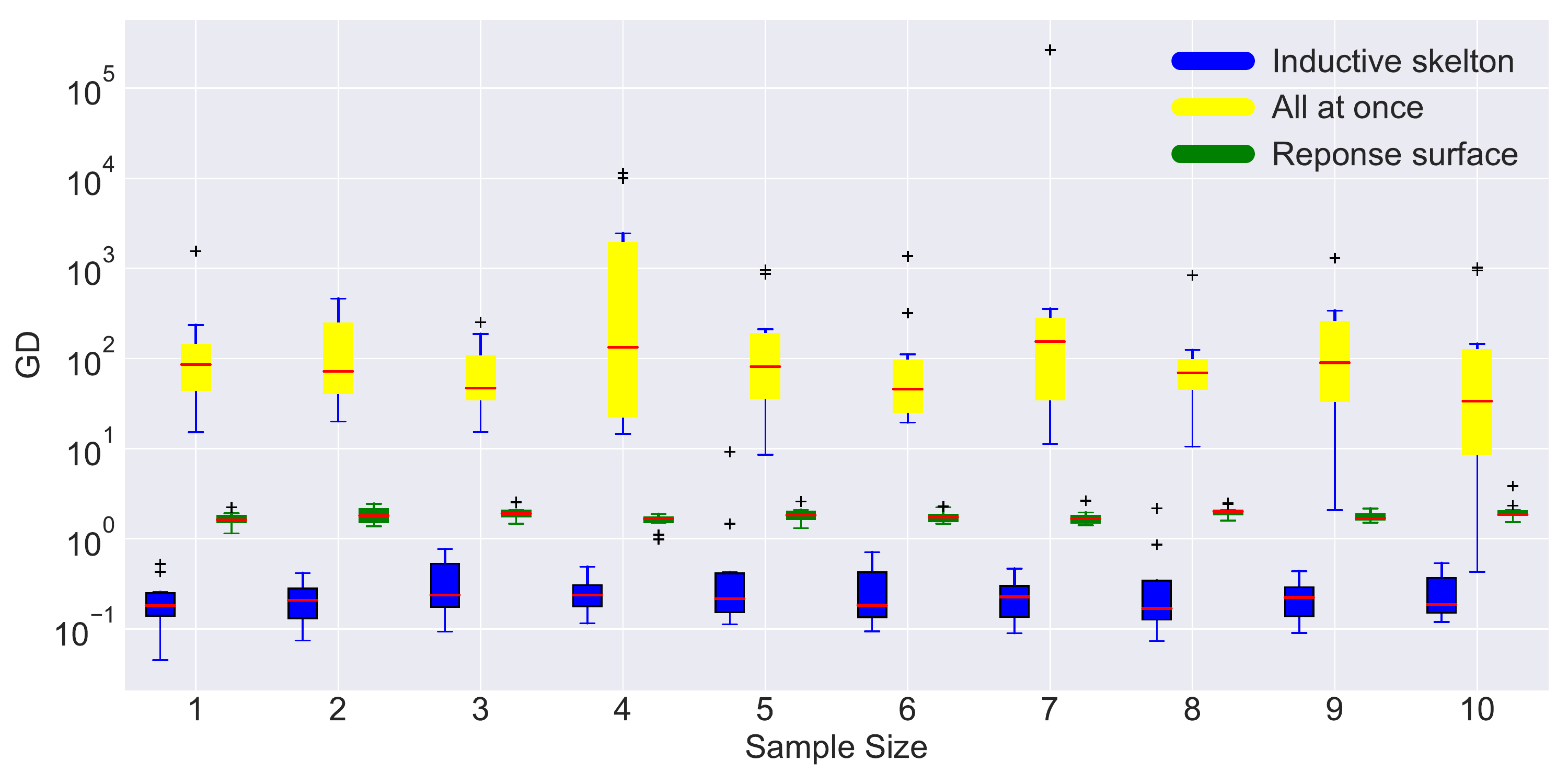}
\includegraphics[width=0.49\hsize]{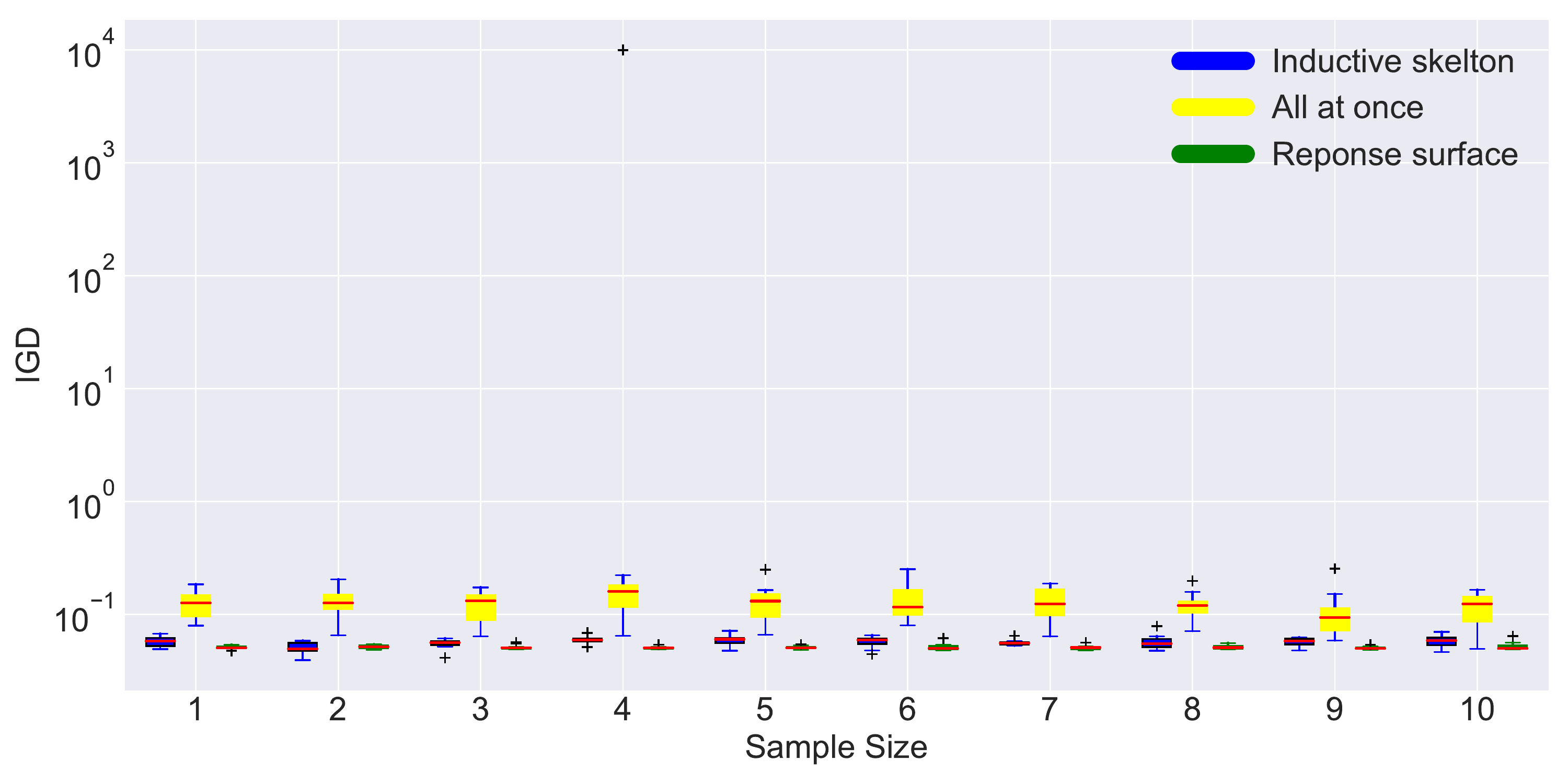}}
\caption{Sample size $N_3$ vs.\ GD/IGD on Viennet2 (boxplot over ten trials).}\label{fig:sample-size-Viennet}
\end{figure*}

\begin{figure*}[t]
\centering%
\subfloat[Sample size $(1,2,N_3)$]{%
\includegraphics[width=0.49\hsize]{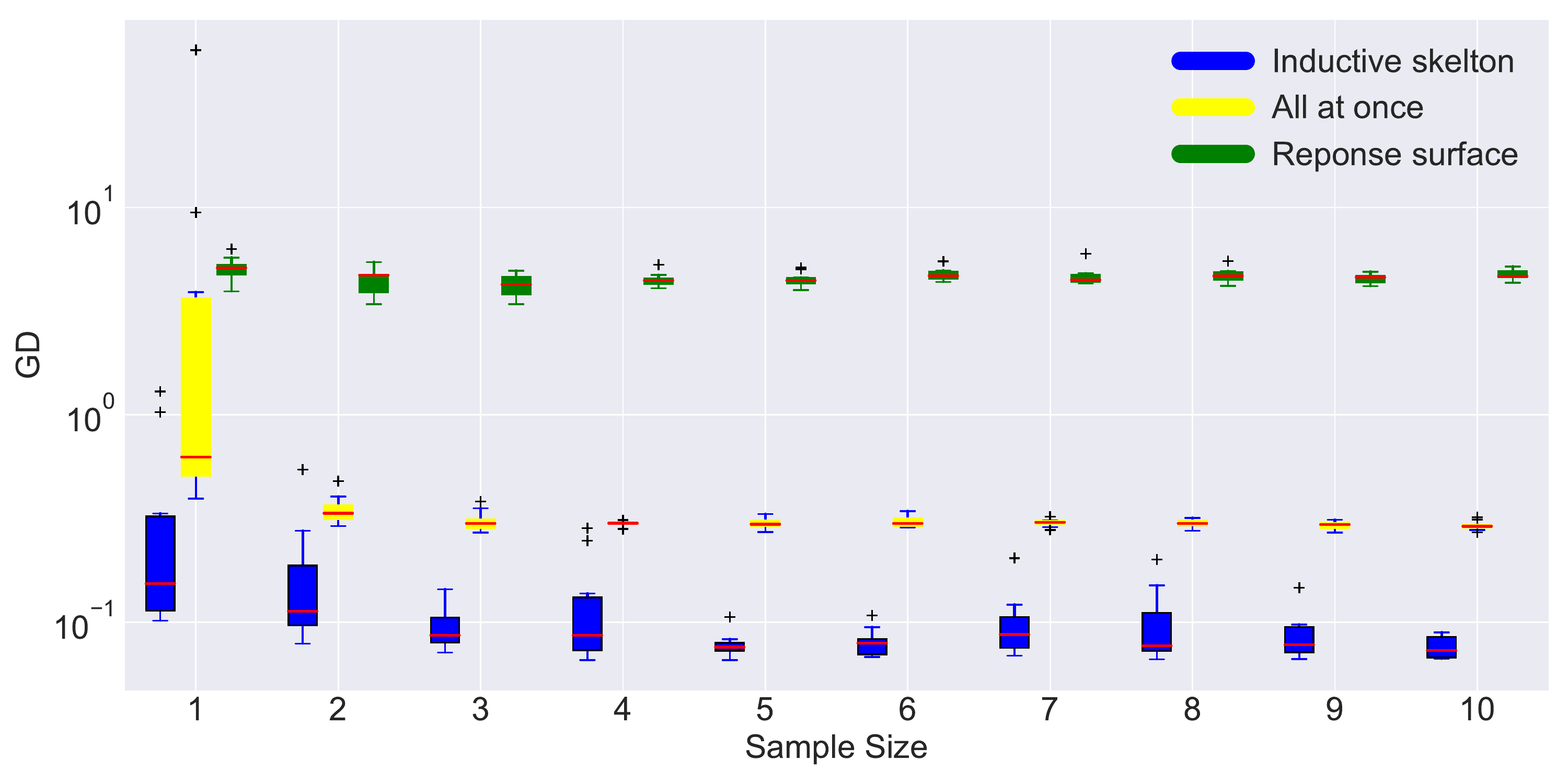}
\includegraphics[width=0.49\hsize]{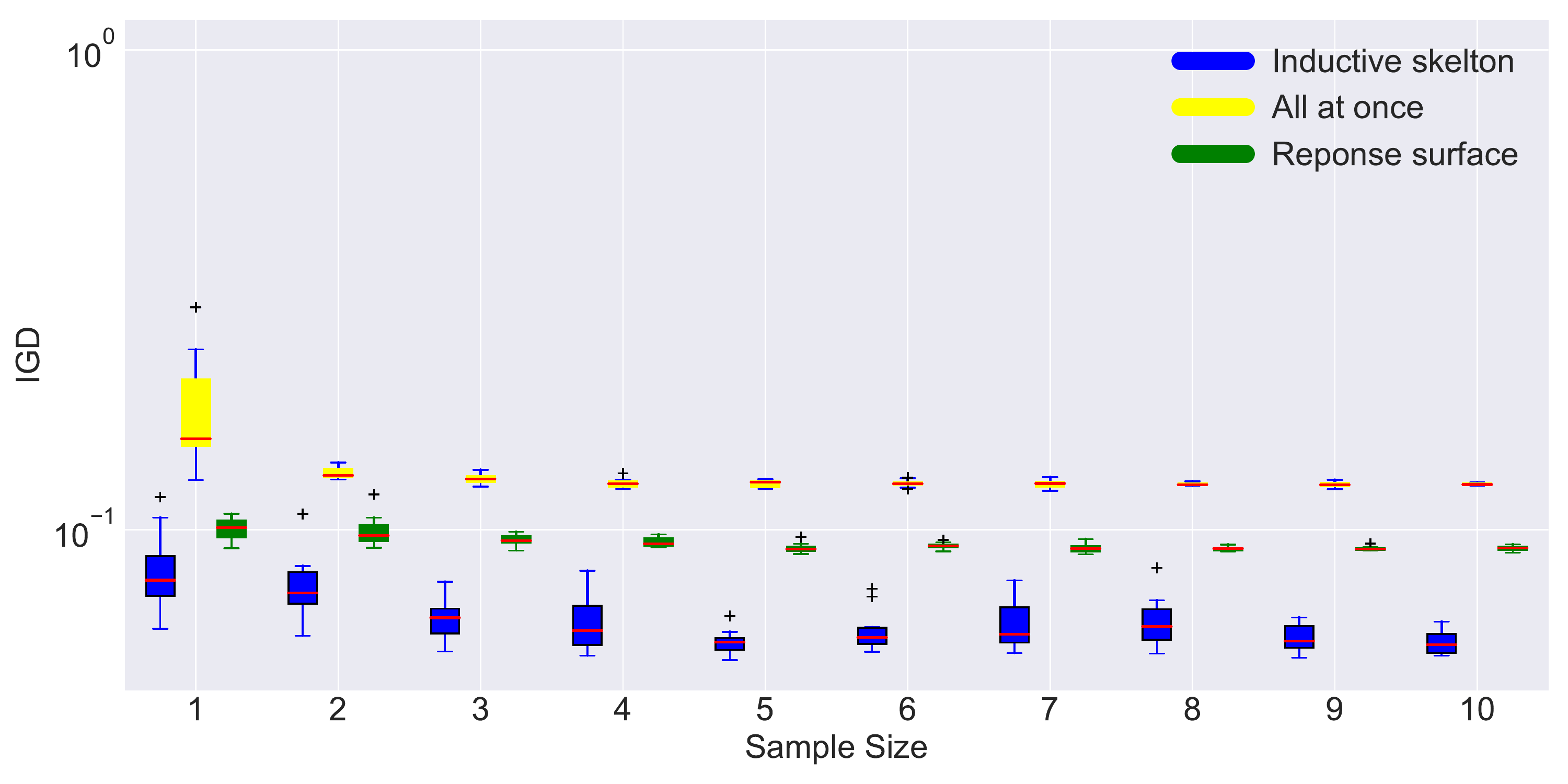}}\\
\subfloat[Sample size $(1,3,N_3)$]{%
\includegraphics[width=0.49\hsize]{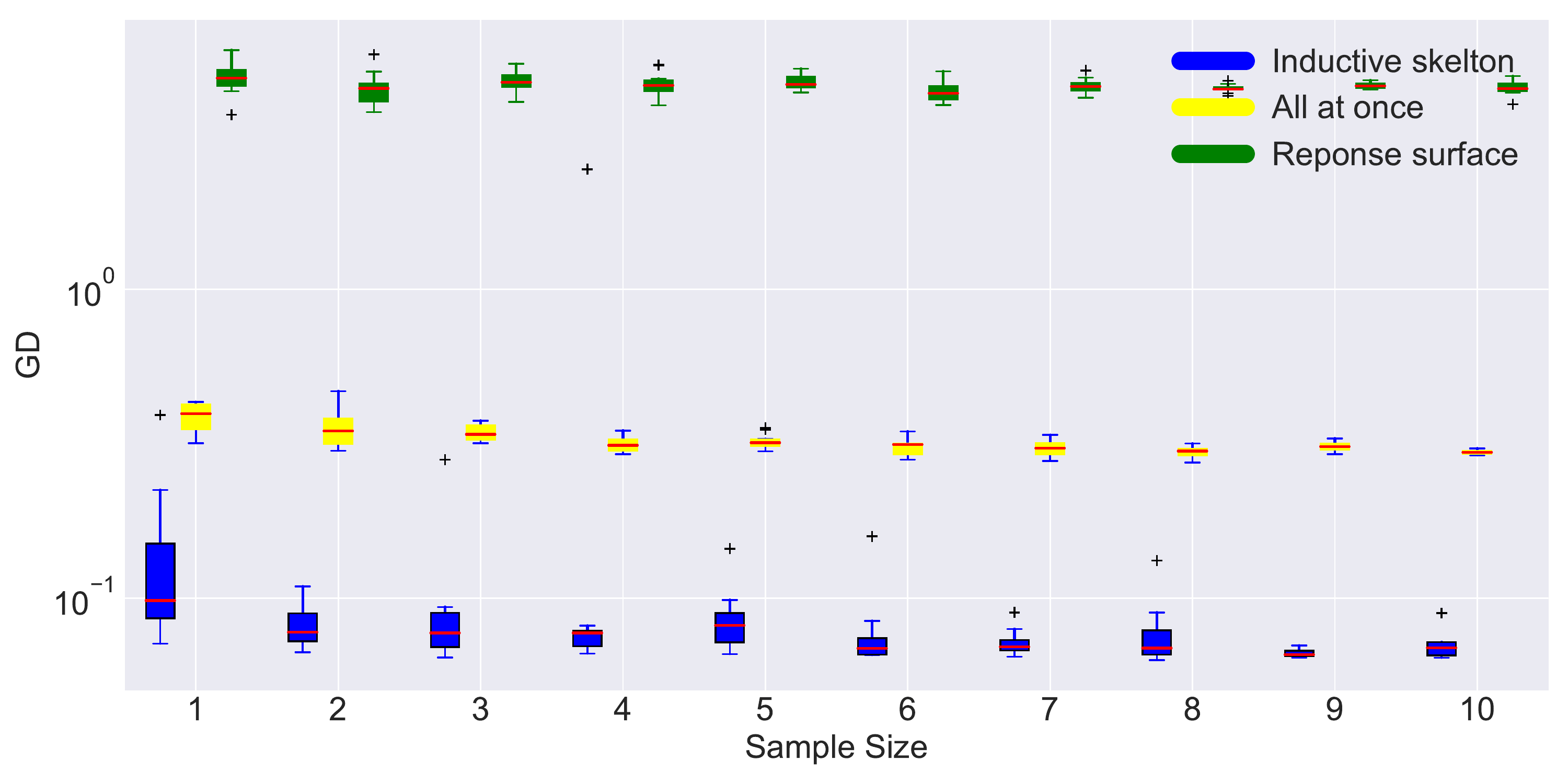}
\includegraphics[width=0.49\hsize]{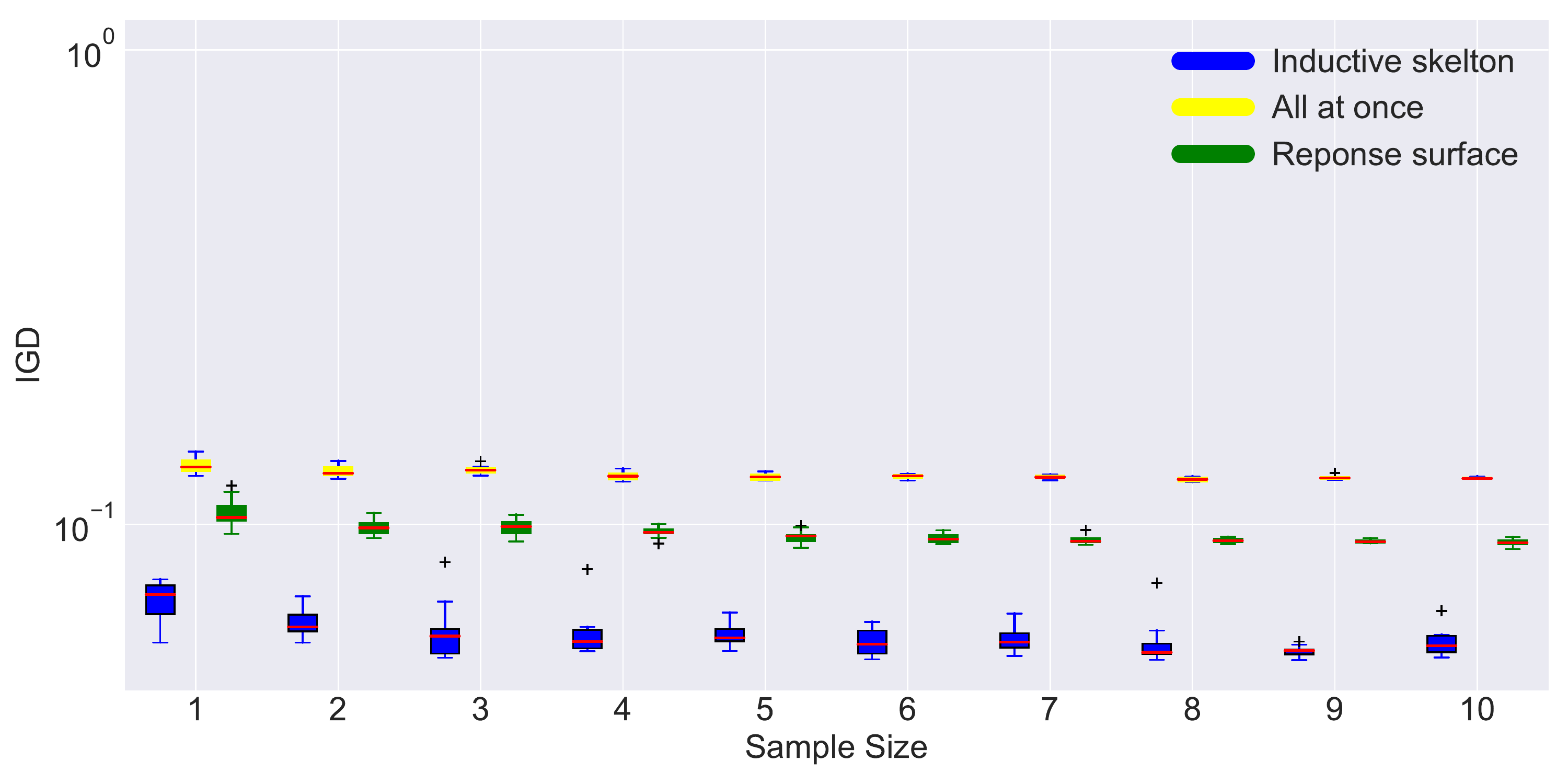}}\\
\subfloat[Sample size $(1,4,N_3)$]{%
\includegraphics[width=0.49\hsize]{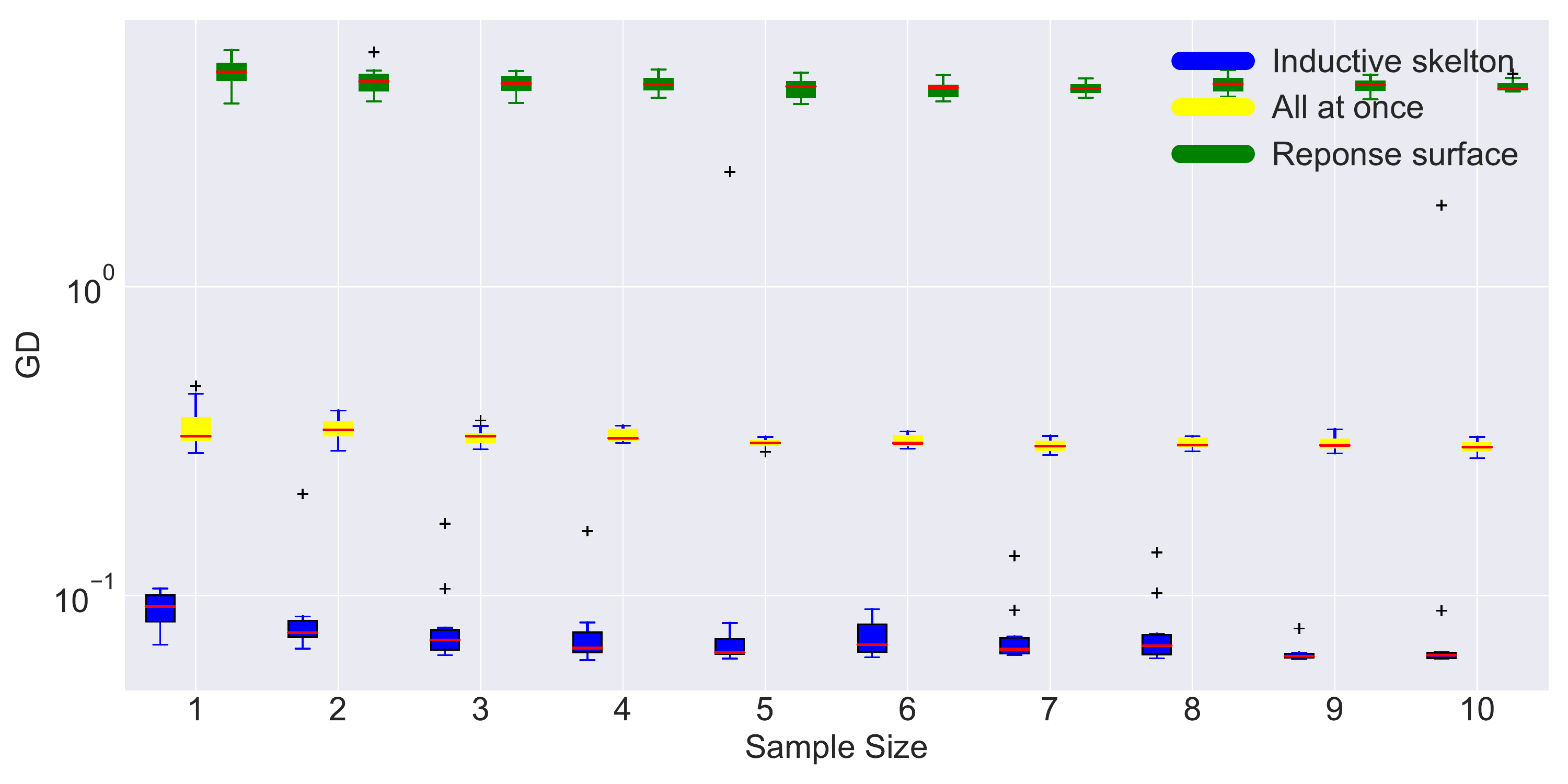}
\includegraphics[width=0.49\hsize]{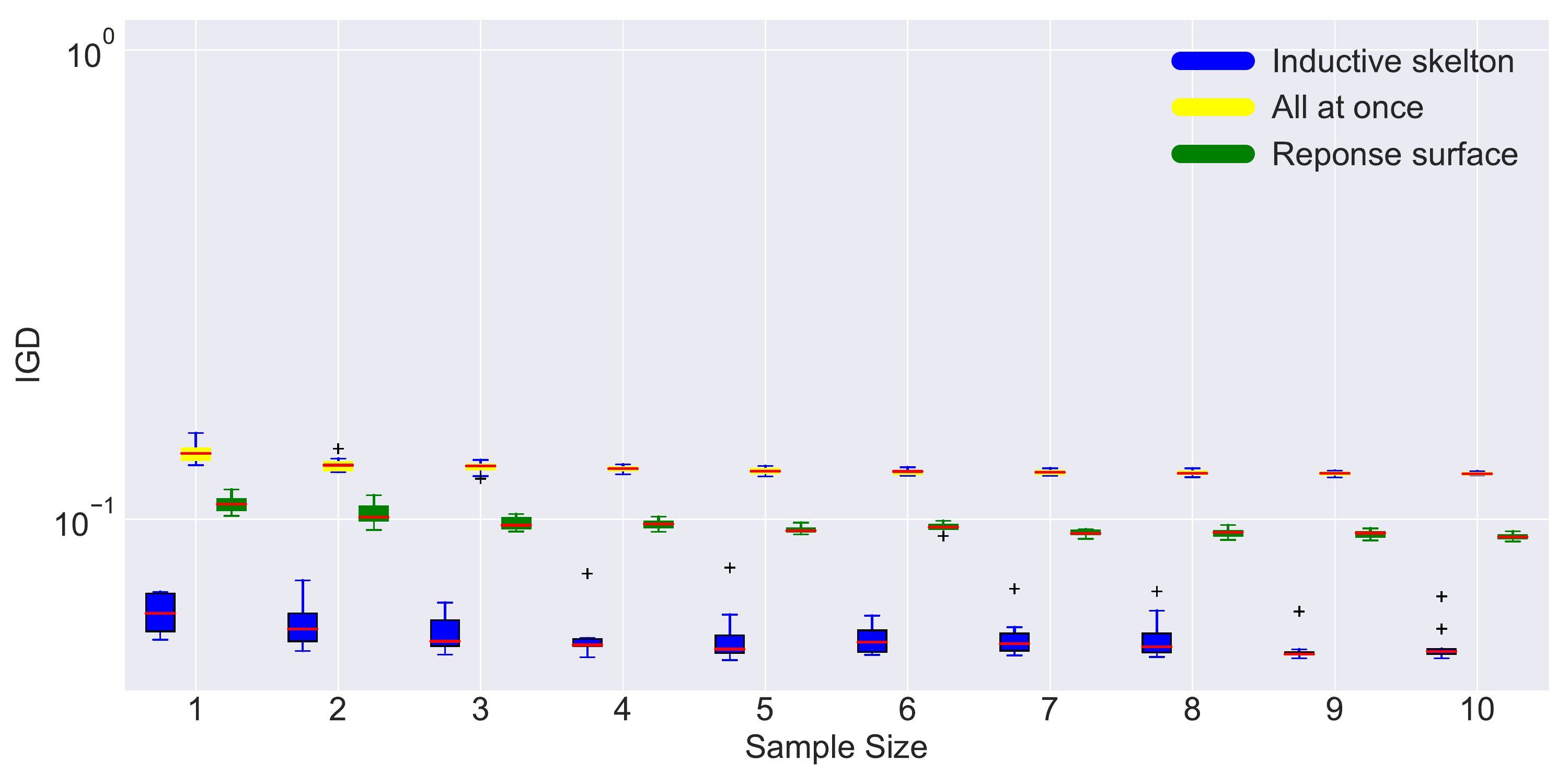}}
\caption{Sample size $N_3$ vs.\ GD/IGD on 5-MED (boxplot over ten trials).}\label{fig:sample-size-MED5_appendix}
\end{figure*}
\end{document}